\numberwithin{equation}{section}
\newtheorem{theorem}{Theorem}[section]
\newtheorem{corollary}[theorem]{Corollary}
\newtheorem{proposition}[theorem]{Proposition}
\newtheorem{lemma}[theorem]{Lemma}
\theoremstyle{definition}
\theoremstyle{remark}
\newtheorem{remark}[theorem]{Remark}
\title{\textbf{Shock formation of the Burgers-Hilbert equation}}
\author{Ruoxuan Yang}
\address{Department of Mathematics, Massachusetts Institute of Technology, Cambridge, MA 02139} \email{rxyang@mit.edu}
\date{}
\begin{document}

\begin{abstract}
We prove finite time blowup of the Burgers-Hilbert equation. We construct smooth initial data with finite $H^5$-norm such that the $L^\infty$-norm of the spatial derivative of the solution blows up in finite time. The blowup is an asymptotic self-similar shock at one single point with an explicitly computable blowup profile. The blowup profile is a cusp with H\"older $1/3$ continuity. The blowup time and  location are described in terms of explicit ODEs. Our proof uses a transformation to modulated self-similar variables which is compatible with the Hilbert transform, the quantitative properties of the stable self-similar solution to the inviscid Burgers equation, an $L^2$-estimate in self-similar variables, and pointwise estimates for the Hilbert transform and for transport equations. 
\end{abstract}

\maketitle


\section{Introduction}
The Burgers-Hilbert (BH) equation consists of an inviscid Burgers equation with a source term given by the Hilbert transform
\begin{equation}
    \partial_t u+u\partial_xu=H[u],\label{eq:BH}
\end{equation}
where the Hilbert transform is defined for $f:\mathbb{R}\to\mathbb{R}$ by
\begin{align*}
H[f](x):=\frac{1}{\pi}\mathrm{p.v.}\int_\mathbb{R}\frac{f(y)}{x-y}\,dy,\quad \widehat{H[f]}(\xi)=-i\,\mathrm{sgn}(\xi)\hat{f}(\xi),
\end{align*}
where p.v. stands for principal value.

The BH equation is Hamiltonian in the sense that
\begin{align*}
\partial_t u+\partial_x\Big(\frac{\delta \mathcal{H}}{\delta u}\Big)=0,\quad \mathcal{H}(u)=\int_\mathbb{R}\Big(\frac{1}{6}u^3+\frac{1}{2}u\Lambda^{-1}u\Big)\,dx,
\end{align*}
where $\Lambda=(-\Delta)^\frac{1}{2}=H\partial_x$ has symbol $|\xi|$.

The BH equation was first obtained by Marsden and Weinstein \cite{Marsden-Weinstein} as a quadratic
approximation for the motion
of a free boundary of a vortex patch. Later, Biello and Hunter \cite{Biello-Hunter} showed, using formal asymptotic expansions, that the BH equation is an effective equation for small-amplitude motions of a planar vorticity discontinuity between two two-dimensional inviscid incompressible shear flows of different rates, and very recently the validity of this approximation is proved in \cite{Hunter-andmore}. Hence, the BH equation is a model equation for more complicated fluid systems.

Since the Hilbert transform is a skew-adjoint singular integral operator of order zero, and $H^2=-I$, the source term $H[u]$ in \eqref{eq:BH} is $L^2$-conservative but non-smoothing. The linearized equation $\partial_t u=H[u]$ is non-dispersive. The initial value problem 
\begin{align*}
\partial_t u=H[u],\quad u(x,0)=u_0(x)
\end{align*}
has general solutions \cite{Biello-Hunter} 
\begin{align*}
u(x,t)=u_0(x)\cos{t}+H[u_0](x)\sin{t},
\end{align*}
so the solutions oscillate with constant frequency 1. Thus the BH equation is a model equation for waves with a constant, nonzero linearized frequency. Dimensional analysis in \cite{Biello-Hunter} showed that the BH equation is an appropriate equation to describe Hamiltonian surface
waves with a constant frequency.

The initial value problem \begin{equation}
     \partial_t u+u\partial_x u=H[u],\qquad u(x,0)=u_0(x)\label{eq:BHinit}
\end{equation}
is locally well posed in $H^k(\mathbb{R})$ for $k>3/2$, with the same proof as for the inviscid Burgers equation. We include the proof in Appendix \ref{ap:wellposed}. For uniqueness, if two solutions in $\mathcal{C}\big([0,T];H^k(\mathbb{R})\big)\cap \mathcal{C}^1\big((0,T);H^{k-2}(\mathbb{R})\big)$ for $k>3/2$ to the initial value problem \eqref{eq:BHinit} agree on an open subset of $\mathbb{R}\times [0,T]$, then they are identical on $\mathbb{R}\times [0,T]$ \cite{Kenig}.

There are also some small amplitude, even, zero-mean, $2\pi$-periodic initial data that have global smooth traveling wave solutions. Such solutions can be found by an application of bifurcation theory \cite{Hunter}. Bressan and
Nguyen \cite{Bressan-Nguyen} proved that a global-in-time weak entropy solution exits if $u_0\in L^2(\mathbb{R})$; moreover, for this solution, the function $t\mapsto \|u(\cdot,t)\|_{L^2}$ is non-increasing and $u(\cdot,t)\in L^\infty(\mathbb{R})$ for all $t>0$. The proof in \cite{Bressan-Nguyen} does not show that the solution is of bounded variation. The uniqueness of weak entropy solutions is open. As an intermediate situation, Bressan and Zhang constructed piecewise continuous solutions to the BH equation with a single shock \cite{Bressan-Zhang}.

In the opposite direction, numerical simulations show that smooth initial data typically form shocks, i.e.\ $\partial_x u\to\infty$ in finite time. For example, a numerical solution of the BH equation with initial datum $u_0(x)=\sin{x}$ has a typical Burgers steepening and forms a logarithmically cusped shock like the cusp of $|x|\log|x|$ at $x=0$ \cite{Hunter}. From the analytic side, Castro, C\'{o}rdoba, and Gancedo \cite{C-C-G} proved finite time blowup of the $\mathcal{C}^{1,\delta}$-norm with $0<\delta<1$ for initial data $u_0\in L^2(\mathbb{R})\cap C^{1,\delta}(\mathbb{R})$ that has an $x_0\in\mathbb{R}$ such that $H[u_0](x_0)>0$ and $u_0(x_0)\geq (32\pi\|u_0\|^2_{L^2})^\frac{1}{3}$. However, the proof is by contradiction together with a maximal principle for a certain integral operator; it does not show that $\partial_xu$ blows up as observed in numerical simulations, or give an estimate on the blowup time.

In this paper we establish finite time blowup of the BH equation. Since we need to introduce many notations to state our main result, here just like \cite{B-S-V1}, \cite{B-S-V2} we only give a rough statement. The precise statement is given in Theorem \ref{thm:main} below.

\begin{theorem}[Rough statement of the main result]
    There exists an open set of smooth initial data with minimum initial slope equal to $-\epsilon^{-1}$ for $\epsilon>0$ sufficiently small, such that smooth solutions to the BH equation from these initial data blow up within time $O(\epsilon)$. The blowup is an explicit asymptotically self-similar shock at only one  point, and this is the first time the solutions become singular.  The blowup profile is an explicitly computable cusp with H\"older 1/3 continuity. Both the location where $\partial_xu$ blows up and the time of the first singularity are described in terms of explicit ODEs. 
\end{theorem}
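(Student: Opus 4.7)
The plan is to follow the modulated self-similar variable framework developed by Buckmaster, Shkoller and Vicol for Burgers-type equations, treating the Hilbert transform as a nonlocal perturbation that is damped by the rescaling. The first step is to pass to the coordinates
\begin{equation*}
y=\frac{x-\xi(t)}{(T_\star-t)^{3/2}},\qquad W(y,s)=\frac{u(x,t)-n(t)}{(T_\star-t)^{1/2}},\qquad s=-\log(T_\star-t),
\end{equation*}
where $T_\star$ is the blowup time and $\xi(t),n(t)$ (possibly together with an amplitude modulation $\kappa(t)$) are modulation parameters. These are determined dynamically by imposing normalization conditions at $y=0$, for example $W(0,s)=0$, $\partial_y W(0,s)=-1$, $\partial_y^2 W(0,s)=0$, which yield coupled ODEs for $\dot T_\star,\dot\xi,\dot\kappa,\dot n$. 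These are precisely the explicit ODEs for the shock location and time that appear in the statement. Under this rescaling the Burgers part of \eqref{eq:BH} admits the explicit stable self-similar profile $\bar W(y)$ solving the stationary equation $(\tfrac32 y+\bar W)\bar W'=\tfrac12\bar W$ and determined implicitly by a cubic that encodes the H\"older $1/3$ cusp, while the Hilbert transform picks up a prefactor $e^{-s}$ and hence acts as an exponentially damped forcing in self-similar time.

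Writing $W=\bar W+\tilde W$, one derives a transport equation for $\tilde W$ with linear operator $\tfrac12-(\tfrac32 y+\bar W)\partial_y-\bar W'$, a quadratic nonlinearity, modulation source terms, and the forcing $e^{-s}H[W]$. The characteristics of $\tfrac32 y+\bar W$ are repelled from the origin and sweep information outward exponentially fast in $s$, which, together with the damping from $-\bar W'$, renders the linearized evolution contractive on bounded $y$-windows and stabilizes $\bar W$. The proof then reduces to a bootstrap argument with three layers: (i) weighted pointwise bounds of the form $|\tilde W(y,s)|\lesssim\epsilon(1+|y|)^\alpha$ together with analogous bounds on a few derivatives, obtained by integrating along the characteristics and applying the pointwise transport estimates advertised in the abstract; (ii) a weighted $L^2$ or high Sobolev energy estimate in self-similar variables, in which commuting $\partial_y^k$ past the transport operator produces a damping constant growing linearly in $k$, large enough to absorb the top-order nonlinearity at $k\sim 5$ and matching the $H^5$ hypothesis on the initial data; and (iii) differential inequalities for the modulation parameters obtained by enforcing the normalization conditions at $y=0$.

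The main technical obstacle is the nonlocality of $H$: the value of $H[u](x,t)$ depends on $u$ everywhere on $\mathbb R$, so $H[W](y,s)$ is not determined by $W$ on any bounded self-similar window. I would handle this by splitting the principal-value integral defining $H[W]$ into a local piece, controlled via $L^2$-boundedness of $H$ combined with the weighted energy estimate, and a far-field piece, controlled pointwise using the $e^{-s}$ prefactor together with a separate transport estimate for $u$ in the original physical variables away from the incipient shock, where the solution remains smooth and bounded by its initial size. These pointwise bounds on $H[W]$, which I expect to grow at worst logarithmically in $y$, then feed back consistently into the characteristic integration in layer (i) and the modulation ODEs in layer (iii).

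Closing the bootstrap yields uniform-in-$s$ control of $\tilde W$ and of the modulation parameters on the whole interval $[s_0,\infty)$ with $s_0=-\log T_\star$. Translating back to physical coordinates gives finite-time blowup of $\partial_x u$ at the single point $(\xi_\star,T_\star)$ with $T_\star=O(\epsilon)$, together with asymptotic self-similar convergence of the rescaled solution to $\bar W$ and hence the H\"older $1/3$ cusp profile of the blowup. Openness of the set of admissible initial data is inherited from the openness of the bootstrap hypotheses at $s=s_0$, since small perturbations of the initial profile remain inside the bootstrap ball.
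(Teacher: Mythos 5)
Your plan is, in outline, the same as the paper's: the modulated self-similar change of variables with normalization constraints at the origin generating the modulation ODEs, stability of the stable self-similar Burgers profile $\overline{U}$ via a bootstrap that combines weighted transport estimates along repelling Lagrangian trajectories, an $H^5$-level $L^2$ estimate, and the observation that the Hilbert transform enters the self-similar equation with an $e^{-s}$ prefactor. (Two small inaccuracies: the paper needs five derivatives not because the damping must absorb the top-order nonlinearity at $k\sim 5$, but because $L^\infty$ control of $H[\partial_X^jU]$ for $j\le 4$ costs one extra derivative through interpolation with $\partial_X^5U$ in $L^2$; and the amplitude modulation $n$ and $\kappa$ in your setup are one and the same parameter.)

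The genuine gap is in your far-field treatment, which is exactly where the new work in this problem lies. You control the far part of $H[W]$ by asserting that away from the incipient shock ``the solution remains smooth and bounded by its initial size''; but regularity away from $x_*$ is part of the conclusion (single-point blowup), not an available input — it has to be a bootstrap assumption, and closing it requires pointwise far-field bounds on $H[\partial_XU]$, which the paper gets in Lemma \ref{lem:H1x} (bound \eqref{eq:H1xUfar}) from the $L^2$ bound on $\partial_XU$ rather than from any pointwise decay. Moreover, your expectation that logarithmically growing pointwise bounds on $H[W]$ ``feed back consistently'' into the weighted characteristic integration of $|\tilde W|\lesssim\epsilon(1+|y|)^\alpha$ fails precisely in the region $|y|\gtrsim e^{\frac{3}{2}s}$: there $\log|y|$ is no longer $O(s)$, so along an outgoing trajectory the forcing $e^{-s}\log|y|$ leaves an uncontrollable $\log|y_0|$ contribution and a spatial-decay bootstrap cannot be closed; this obstruction is spelled out in the Remark at the end of Section \ref{sec:strategy}. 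The paper's resolution is to abandon spatial decay beyond $|X|=\frac12 e^{\frac32 s}$ and instead bootstrap temporal decay there, $|\partial_XU|\le 2e^{-s}$ and $|\partial_X^2U|\le 4M^{\frac14}e^{-s}$ (assumptions \eqref{eq:1xUfar}, \eqref{eq:2xUfar}), which is weaker but still yields uniqueness of the blowup point and the H\"older $1/3$ cusp. Without this (or an equivalent explicit far-field mechanism), your bootstrap as stated would not close.
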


\begin{remark}[Comparison to existing numerical and analytic results on shock formation]
In \cite{Biello-Hunter}, Biello and Hunter studied numerical solutions to the BH equation with small-amplitude initial data 
$$u_0(x)=\epsilon[2\cos{x} + \cos{2(x + 2\pi^2)}]$$ for small $\epsilon>0$, and found numerically a relation between the singularity formation time $T_s$ and $\epsilon$: $T_s\sim \epsilon^{-1}$ for $\epsilon\geq 1/\pi$ and $T_s\sim 2.37\epsilon^{-2}$ for $\epsilon\leq 1/2\pi$. The latter enhanced lifespan for smooth initial data $u_0$ such that $\|u_0\|_{H^2(\mathbb{R})}\leq \epsilon\ll 1$ was later proved in \cite{Hunter-Ifrim} by a normal form transformation and in \cite{Hunter-I-T-Wong} by a modified energy method. In contrast, the initial data we construct here are of large $H^2$-norms ($\|u_0\|_{H^2(\mathbb{R})}\gtrsim \|\partial_xu_0\|_{L^\infty(\mathbb{R})}=\epsilon^{-1}$)
and the amplitudes are $O(1)$ in general.

We also mention the recent result of Saut and Wang in \cite{Saut-Wang} in which the authors proved the wave breaking (shock formation) of the BH equation by a different method.
\end{remark}

\begin{remark}[The difficulties and a way to overcome them]\label{rem:difficulty}
Our work is inspired by that of Buckmaster, Shkoller and Vicol \cite{B-S-V1}, \cite{B-S-V2} on shock formation for isentropic compressible Euler equations. Their proofs are based on the modulated self-similar blowup technique, which was developed by Merle and Raphael in \cite{Merle}, \cite{Merle-Raphael} for the nonlinear Schr\"odinger equation, and Merle and Zaag in \cite{Merle-Zaag} for the nonlinear heat equation. This technique is useful to describe the formation of singularities for many equations, including \cite{Collot-Ghoul-Masmoudi} for Burgers equation with transverse viscosity, which is the first paper to use self-similar analysis to study singularity formation on a modified Burgers equation, and  \cite{Elgindi}, \cite{Elgindi-Ghoul-Masmoudi} for 3D incompressible Euler equation with axis-symmetry, and many others. 

The two papers \cite{B-S-V1}, \cite{B-S-V2} on Euler equations serve as a great guide for our problem. Indeed, similarly to \cite{B-S-V1}, \cite{B-S-V2}, the stable stationary solution $\overline{U}$ of the 1D self-similar Burgers equation described in Section \ref{sec:stableprofile} below is the correct asymptotic blowup profile for the BH equation, and the solution $U$ in self-similar variables converges pointwisely to a rescaled version of $\overline{U}$ as the self-similar time $s$ approaches infinity, see Theorem \ref{thm:self}. Still, the blueprint presented there, whose detailed arguments rely heavily on finite speed of propagation, is not a priori applicable to the BH equation. The difficulty comes from the Hilbert transform term.

First, it is not obvious, but is indeed the case after a careful check in Appendix \ref{ap:derivation}, that the Hilbert transform is compatible with the self-similar coordinate transformation $(x,t)\mapsto(X,s)$ defined in \eqref{eq:selftransform}. Second, the solution, in both the original physical variables and the self-similar variables, must lie in Sobolev spaces so that the Hilbert transform is well-defined. This requires $L^2$ estimates as Step 1 in Section \ref{sec:strategy} below. Third, the modulated self-similar blowup technique is a pointwise argument, but the Hilbert transform is a nonlocal operator that is not bounded in $L^\infty$. Nevertheless, this technique is promising: if we start with initial data such that the advection effect $u\partial_xu$ is already very strong, i.e.\ minimum initial slope is $-1/\epsilon$, the non-smoothing Hilbert transform term should not be able to deplete the growth of $\partial_xu$. This is further explained by the $e^{-s}$ factor in front of $H[U+e^\frac{s}{2}\kappa]$ and $H[\partial_X^jU]$ in the equations in self-similar variables \eqref{eq:ansatz}, \eqref{eq:1partialU}, \eqref{eq:npartialU}, which offsets the log factor loss in the pointwise bound of $H[\partial_X^jU]$. On the other hand, the $e^{-s}$ would not be enough to offset the log factor loss had we wished to prove a uniform spatial decay for the spatial derivative of the self-similar profile. The problem occurs in the far field. This creates difficulty for proving the uniqueness of the blowup point, since the blowup point corresponds to the far field behavior. To overcome this, we prove a temporal decay outside a (self-similar-time-dependent) compact set, which is weaker than the uniform spatial decay but still yields uniqueness of the blowup point and H\"older $1/3$ continuity of the blowup profile (see Step 3 in Section \ref{sec:strategy} and Remark \ref{rem:step3} below).

We hope that our methods of proving finite-time blowup of the BH equation can provide some ideas on finite-time blowup results of other non-local fluid dynamic equations. 
\end{remark}

\begin{remark}[Future directions]
As is briefly mentioned in Section \ref{sec:stableprofile}, there is a family of self-similar profiles $U_i$, $i\in\mathbb{N}$ of the inviscid Burgers equation, each of which solves
\begin{align*}
-\frac{1}{2i}U_i+(\frac{2i+1}{2i}X+U_i)\partial_XU_i=0,
\end{align*}
and in this paper, we use the only stable profile $U_1$ (denoted as $\overline{U}$ throughout the paper). The stability allows us to construct shock solutions to the BH equation within an open neighborhood in $H^5(\mathbb{R})$, see Corollary \ref{cor:open}. For the inviscid Burgers equation, one can find initial data leading to blowup solutions asymptotic to each $U_i$ respectively \cite{Collot-Ghoul-Masmoudi}: if $u_0\in\mathcal{C}^\infty(\mathbb{R})$, $\partial_xu_0$ is minimal at $x=0$ with 
\begin{align*}
\partial_xu_0(0)<0,\quad\partial_x^ju_0(0)=0\text{ for }j=2,...,2i,\quad \partial_x^{2i+1}u_0(0)>0,
\end{align*}
then $u$ blows up at time $T=-1/\partial_xu_0(0)$ at $x=0$ and $u$ approaches $U_i$ in the sense described in \cite{Collot-Ghoul-Masmoudi}. 

A natural next question would be to construct blowup solutions to the BH equation that are, after the self-similar transformation, asymptotic to unstable $U_i$'s. However, one needs to be more careful, since $U_i$ for $i\geq 2$ has $2(i-1)$ instability directions. For instance, if we want the solution to approach $U_2$, we need to accurately pick the second and third derivatives $\partial_X^2U(0,-\log\epsilon)$ and $\partial_X^3U(0,-\log\epsilon)$, of the initial data in self-similar variables, to prevent the solution from approaching $U_1$ instead. This will appear in a future paper. Such a construction for the 2D compressible Euler equation has already appeared in \cite{B-I}.

A more interesting yet difficult problem would be to find blowup solutions close to $U_i$ for each $i$ in a unified approach. But we think that bootstrapping alone will not be sufficient and must be combined with other techniques. However, the tools we use here to overcome the nonlocality and lack of pointwise boundedness of the Hilbert transform will also be useful for the resolution of this bigger problem.\label{rem:future}
\end{remark}

\subsection{Strategy of the proof}\label{sec:strategy}
We change the physical variables $(x,t)$ to the modulated self-similar variables $(X,s)$ by the transformation \eqref{eq:selftransform} below, and map $u$ to $U$ by \eqref{eq:uUrelation} accordingly. The role of the dynamic modulation variables is to enforce the constraints \eqref{eq:constraints}: the function $\tau(t)$ provides precise information on the blowup time, the function $\xi(t)$ provides precise information on the blowup location, while $\kappa(t)=u\big(\xi(t),t\big)$ controls the amplitude at the blowup location.

We then rewrite the BH equation into an equivalent self-similar evolution equation \eqref{eq:ansatz} for $U$. We obtain the evolution equations \eqref{eq:tau}, \eqref{eq:kappaxi} for $\tau,\kappa,\xi$ from the constraints \eqref{eq:constraints}. The blowup time $T_*$ corresponds to $s=+\infty$ and the proposed blowup location $x_*:=\xi(T_*)$ corresponds to $X=0$. Shock formation at $x_*$ corresponds to $\partial_XU(0,s)=-1$, and the regularity at other points corresponds to the spatial and temporal decays of $\partial_XU(X,s)$. Thus Theorem \ref{thm:main} follows directly from Theorem \ref{thm:self}.

The proof of Theorem \ref{thm:self} utilizes a bootstrap argument: we first choose a fixed initial datum satisfying some quantitative properties, then assume that the solution satisfies some worsened quantitative properties. Using these assumptions, we a posteriori show that we can improve those bounds by a factor $<1$. Thus those properties can be propagated in time as long as the smooth solution exists, and we close the bootstrap argument and obtain a solution with those properties. The pointwise convergence is not part of the bootstrap as  we will prove it a posteriori.

\textbf{Step 1:} We derive the self-similar time evolution of $\|U(\cdot,s)+e^\frac{s}{2}\kappa\|_{L^2}$ and prove that $\|\partial_XU(\cdot,s)\|_{L^2}$ and $\|\partial^5_XU(\cdot,s)\|_{L^2}$ are bounded for all $s$, thus showing a unique solution in $H^5(\mathbb{R})$ exists for all $s$. We will also use these $L^2$-estimates to bound the Hilbert transform terms. We now explain why we need to go up to the 5th order derivative. In the next step, we will need $L^\infty$-information on $\partial^j_XU$ for $j=0,...,4$. To give an $L^\infty$-estimate of $\partial_X^jU(\cdot,s)$ we need to estimate $\|H[\partial_X^jU](\cdot,s)\|_{L^\infty}$ as it appears in the forcing term, and this requires one more derivative. 

\textbf{Step 2:} We prove that $U(\cdot,s)$ is close to the 1D self-similar Burgers profile $\overline{U}$ defined as \eqref{eq:barUformula} in $\mathcal{C}^1([-\frac{1}{2}e^{\frac{3}{2}s},\frac{1}{2}e^{\frac{3}{2}s}])$. To this end, we deal with the region close to 0 and the middle field away from zero separately. For the region close to 0, we first estimate the $\partial_X^4\widetilde{U}$  near 0, where $\widetilde{U}:=U-\overline{U}$, then use the Fundamental Theorem of Calculus repeatedly to estimate the 3rd to 0th derivative of $\widetilde{U}$. For the middle field, we use the exponential escape of the Lagrangian trajectories associated with $U$ away from 0,  
together with Gr\"onwall type transport estimates. The transport estimates are weighted estimates on $\widetilde{U}$, $\partial_X\widetilde{U}$, and $\partial^2_XU$. For $\partial_X^3U$ we do not need a weight, and we must stop at some derivative so that we are able to close the bootstrap argument. In principle, a non-weighted bound for $\partial_X^2U$ would be enough, because it is not tied to the blowup, but since it appears in the forcing term for $\partial_X^3U$, in order to prove the boundedness of $\partial_X^3U$ we need a certain decay of $\partial_X^2U$ as well. An important observation about the forcing terms, in particular, the Hilbert transform part, is that we can use the temporal decay $e^{-s}$ to kill the log factor from the pointwise bound on the Hilbert transform term and also gain some spatial decay in the range $|X|\leq \frac{1}{2}e^{\frac{3}{2}s}$.

\textbf{Step 3:} The behavior of $U(X,s)$ for $(X,s)$ such that $|X|\geq \frac{1}{2}e^{\frac{3}{2}s}$ is different. Since $\overline{U}$ is not  $L^2$-integrable, while $U(\cdot,s)+e^\frac{s}{2}\kappa\in L^2(\mathbb{R})$, we do not expect $U$ and $\overline{U}$ to be close at large $X$ (we do expect them to be close within a larger and larger range when $s$ increases as in Step 2, since $\|U(\cdot,s)+e^\frac{s}{2}\kappa\|_{L^2}$ is growing in $s$). Instead, we prove that $\|U(\cdot,s)+e^\frac{s}{2}\kappa\|_{L^\infty}$ is bounded. For the derivatives of $U$, we cannot gain any spatial decay for the forcing terms from the temporal decay $e^{-s}$ any more. In order to still be able to achieve uniqueness of the blowup point, we prove that $e^s\partial_XU$, $e^s\partial_X^2U$ and $\partial_X^3U$ are bounded. Again the temporal decay for $\partial_X^2U$ is needed to estimate $\partial_X^3U$. 

\textbf{Step 4:} We prove the estimates for the time derivatives of $\tau$ and $\xi$ thus obtaining precise information on the blowup time and location. 

\textbf{Step 5:} After closing the bootstrap, we prove the pointwise convergence of $U(X,s)$ to $\overline{U}_\nu$, a rescaled version of $\overline{U}$ defined in Theorem \ref{thm:self}. The rescaling is for matching the third spatial derivative at $X=0$. To do so, we consider the equation \eqref{eq:eqtildenu} for the difference $\widetilde{U}_\nu:=U-\overline{U}_\nu$, then use the Taylor expansion of $\widetilde{U}_\nu$ at $X=0$ and the (new) Lagrangian trajectory associated to \eqref{eq:eqtildenu} to propagate to all $X\in\mathbb{R}$.  

\begin{remark}\label{rem:step3}
Step 3 is what differs most from previous treatments concerning the compressible Euler equations \cite{B-S-V1}, \cite{B-S-V2}. Since the self-similar evolution equation for the 3D compressible Euler equation has finite speed of propagation, solutions have (time-dependent) compact support if the initial data have compact support. From this, one can use a commutator estimate with the weight $(1+X^2)^{1/3}$ to gain spatial decays of higher order derivatives from an even higher Sobolev norm along with decays of lower order derivatives. Hence, the relevant forcing terms  have the desired spatial decay even though bounding them involve higher order derivatives. The very high Sobolev norm and compact support are what close the argument.    

However, as we state in Remark \ref{rem:difficulty} above, the self-similar evolution equation \eqref{eq:ansatz} for the BH equation does not have finite speed of propagation due to the presence of the nonlocal Hilbert transform in \eqref{eq:BH}. Since $(1+X^2)^{1/3}$ is unbounded or not even in BMO (bounded mean oscillation), no commutator estimate is available for higher order derivatives on the whole real line. Without decay of $\partial_X^2U$, if we suppose $|\partial_XU(X,s)|\leq C(1+X^2)^{-\frac{1}{3}}$ for all $(X,s)$, then the best bound we can find for $H[\partial_XU](X,s)$ when $(X,s)$ is such that $|X|\geq \frac{1}{2}e^{\frac{3}{2}s}$ is
\begin{align*}
|H[\partial_XU](X,s)|\lesssim \|\partial_X^2U(\cdot,s)\|_{L^\infty}(1+X^2)^{-\frac{1}{3}}+(1+X^2)^{-\frac{1}{3}}\log(1+X^2).
\end{align*}
The forcing term of $(1+X^2)^\frac{1}{3}\partial_XU$ is then $\lesssim e^{-s}\big(\|\partial_XU^2(\cdot,s)\|_{L^\infty}+\log(1+X^2)\big)$, which is $s$-integrable with the upper bound of Lagrangian trajectory (Lemma \ref{lem:upperLagrangian}), but we will end up having an uncontrollable $\log|X_0|$ term. So we will not be able to close the bootstrap argument in this case\footnote{The log factor is not a problem for estimating $(1+X^2)^\frac{1}{3}\partial_XU$ or $(1+X^2)^\frac{1}{3}\partial_X\widetilde{U}$ for $(X,s)$ such that $l\leq |X|\leq \frac{1}{2}e^{\frac{3}{2}s}$, since for $X$ in the middle field, $\log(1+X^2)\lesssim s$, and $\int_{s_0}^ss'e^{-s'}\,dx'\leq (s_0+1)e^{-s_0}$ is very small.}.

Instead, if we follow Step 3, then as an intermediate step of \eqref{eq:H1xUfar} we get a term $se^{-s}$ (the $s$ comes from $\log{e^{-s}}$), which is not only $s$-integrable but also small uniformly in $X$. A byproduct of the weaker far field assumption in Step 3 is that we get a $(1+X^2)^{-\frac{1}{4}}$ term in the bound of $H[\partial_XU](X,s)$ for $l\leq |X|\leq \frac{1}{2}e^{\frac{3}{2}s}$ (see \eqref{eq:H1xUmiddle}). This smaller exponent results from the use of H\"older's inequality. This bound is not sharp, and when multiplying by the weight $(1+X^2)^\frac{1}{3}$, we will get $(1+X^2)^\frac{1}{12}$. Nevertheless, we can use a fractional power of $e^{-s}$ to kill the positive power of $X$.
\end{remark}

\subsection{Paper outline}
The paper is organized as follows:
\begin{itemize}[leftmargin=*]
\item In Section \ref{sec:self}, we carry out the modulated self-similar transformation and rewrite the BH equation into the evolution equation \eqref{eq:ansatz} for the self-similar variable $U$. We record the explicit expression and quantitative properties of the stable steady-state solution to the self-similar Burgers equation. We also record the equations for $\partial_XU,...,\partial^5_XU$, and equations for $\widetilde{U},\partial_X\widetilde{U},...,\partial^4_X\widetilde{U}$ for later use. Imposing constraints on $U$ we get evolution equations for the dynamic modulation variables $\xi,\tau,\kappa$.
\item In Section \ref{sec:result}, we state the assumptions on the initial data both in the original physical variables and in the self-similar variables. Then we state Theorem \ref{thm:self} in terms of the self-similar variables, and our main result, Theorem \ref{thm:main}. We also state in Corollary \ref{cor:open} that the initial data can be taken in an open set in the $H^5$-topology.
\item In Section \ref{sec:L2estimate}, we prove the $L^2$-bounds on $U+e^\frac{s}{2}\kappa$, $\partial_XU$ and $\partial^5_XU$. This section is the Step 1 of the bootstrap argument described in the previous section.
\item In Section \ref{sec:forcingbound}, we record bounds for all the forcing terms for later use. These include forcing terms of $\partial^4_X\widetilde{U}$ for $X$ near 0, forcing terms of $\widetilde{U},\partial_X\widetilde{U}, \partial_X^2U,\partial_X^3U$ for $X$ in the middle field, the forcing terms of $\partial_XU,\partial_X^2U,\partial_X^3U$ for $X$ in the far field, and the forcing term of $U$ uniformly in $X$.
\item In Section \ref{sec:closure}, we close our bootstrap assumptions stated in Section \ref{sec:assumptionself} and \ref{sec:assumptiondynamic}. This section is the Step 2 to 4 of the bootstrap argument. 
\item In Section \ref{sec:proofmain}, we prove the pointwise convergence part of Theorem \ref{thm:self}. Then we prove our main result, Theorem \ref{thm:main}, from the bootstrap part of Theorem \ref{thm:self}. We also prove Corollary \ref{cor:open}.
\item In Appendix A, we prove the local well-posedness theorem. In Appendix B, we derive the evolution equation for the self-similar variable step by step. In Appendix C, we list a few interpolation lemmas. In Appendix \ref{ap:simple}, we state and prove two simple lemmas used in the bootstrap argument.
\end{itemize}

\subsection*{Acknowledgment}
The author would like to thank Tristan Buckmaster for suggesting the problem, both Tristan Buckmaster and Gigliola Staffilani for providing helpful suggestions and comments, and referees and various other people for their careful reading and suggestions.

\section{Self-similar transformation}\label{sec:self}
Since the BH equation is translation invariant in time, we can set the initial time $t_0=-\epsilon$ for some small parameter $\epsilon$ to be determined later\footnote{This choice of initial time is only for convenience. If we set $t_0=0$ instead, then we need to change the initial condition of $\tau$ to be $\tau(0)=\epsilon$ in order to properly define the self-similar time $s$, but all the arguments are unaffected.}. Let $u_0=u(\cdot,-\epsilon)$ be the initial data. Let $T_*$ denote the maximal time of existence of the solution $u$. 

We perform a self-similar transformation 
\begin{align}
X:=\frac{x-\xi(t)}{(\tau(t)-t)^\frac{3}{2}},\qquad s:=-\log{\big(\tau(t)-t\big)},\label{eq:selftransform}
\end{align}
and define the self-similar variable $U$ by
\begin{equation}
\begin{aligned}
u(x,t)&=\big(\tau(t)-t\big)^\frac{1}{2}U\bigg(\frac{x-\xi(t)}{(\tau(t)-t)^\frac{3}{2}},-\log\big(\tau(t)-t\big)\bigg)+\kappa(t)\\
&=e^{-\frac{s}{2}}U(X,s)+\kappa(t),
\end{aligned}\label{eq:uUrelation}
\end{equation}
where the dynamic modulation variables $\xi,\tau,\kappa:[-\epsilon,T_*]\to\mathbb{R}$ control the shock location, blowup time and wave amplitude, respectively. We require that at time $t_0=-\epsilon$,
\begin{align*}
    \tau(-\epsilon)=0,\quad\xi(-\epsilon)=0,\quad \kappa(-\epsilon)=\kappa_0=u_0(0),
\end{align*}
and $\tau(T_*)=T_*$. We will design $u_0$ so that $T_*\sim o(\epsilon)$ and $\tau(t)>t$ for all $t\in [-\epsilon,T_*)$. In the self-similar time $s$, the blowup as $t\to T_*$ corresponds to $s\to+\infty$.

By \eqref{eq:selftransform} and \eqref{eq:uUrelation}, we have: 
\begin{align}
\partial^j_xu(x,t)&=e^{(-\frac{1}{2}+\frac{3}{2}j)s}\partial_X^jU(X,s)\nonumber\\
\implies\qquad \|\partial_x^ju(\cdot,t)\|_{L^2}&=e^{(-\frac{5}{4}+\frac{3}{2}j)s}\|\partial_X^jU(\cdot,s)\|_{L^2}.\label{eq:uUL2relation}
\end{align}

Plugging \eqref{eq:selftransform} into \eqref{eq:BH}, we obtain that $U$ satisfies the equation
\begin{equation}
\big(\partial_s-\frac{1}{2}\big)U+\Big(\frac{U+e^\frac{s}{2}(\kappa-\dot{\xi})}{1-\dot{\tau}}+\frac{3}{2}X\Big)\partial_XU=-\frac{e^{-\frac{s}{2}}\dot{\kappa}}{1-\dot{\tau}}+\frac{e^{-s}H[U+e^\frac{s}{2}\kappa]}{1-\dot{\tau}},\label{eq:ansatz}
\end{equation}
where $\dot{f}=df/dt$. The detailed calculation can be found in Appendix \ref{ap:derivation}. It is very important that the self-similar transformation works well with the Hilbert transform, which is what makes the whole argument even possible. 

The two equations for $U$ and $u$ are equivalent as long as the solutions remain smooth. In particular, if $u_0\in H^5(\mathbb{R}) \subset \mathcal{C}^4_c(\mathbb{R})$, then \eqref{eq:ansatz} also has $H^5(\mathbb{R})\subset  \mathcal{C}^4_c(\mathbb{R})$ initial data and is well-posed in $\mathcal{C}^4$ and $H^5$-norm. We will show that $U$ is asymptotically close to the stable steady state self-similar Burgers solution $\overline{U}$ defined in \eqref{eq:barUformula} in the next section in a manner to be made precise below. It is also convenient to introduce the difference 
\begin{align*}
\widetilde{U}(X,s):=U(X,s)-\overline{U}(X).
\end{align*}

\subsection{The stable self-similar solution to the inviscid Burgers equation}\label{sec:stableprofile}
The inviscid Burgers equation
\begin{align*}
\partial_tu+u\partial_xu=0
\end{align*}
has a family of self-similar solutions
\begin{align*}
u(x,t)=(-t+T_*)^\frac{1}{2i} U_{i}\Big(\frac{x-x_*}{(-t+T_*)^{\frac{1}{2i}+1}}\Big)
\end{align*}
for all $i\in\mathbb{N}$, where $T_*$ is the blowup time and $x_*$ is the blowup location. It turns out that only $\overline{U}=U_{1}$ is a stable blowup self-similar profile \cite{Eggers}; other profiles $U_i$ for $i\geq 2$ are all unstable. The profile $\overline{U}$ solves the steady-state self-similar Burgers equation\footnote{We use $\partial_X$ even though $\overline{U}$ is a function of only one variable $X$ to align with $U$.}
\begin{align}
-\frac{1}{2}\overline{U}+\Big(\frac{3}{2}X+\overline{U}\Big)\partial_X\overline{U}=0.\label{eq:selfBurgers}
\end{align}
This profile has an explicit expression
\begin{align}
\overline{U}(X)=\Big(-\frac{X}{2}+\big(\frac{1}{27}+\frac{X^2}{4}\big)^\frac{1}{2}\Big)^\frac{1}{3}-\Big(\frac{X}{2}+\big(\frac{1}{27}+\frac{X^2}{4}\big)^\frac{1}{2}\Big)^\frac{1}{3}.\label{eq:barUformula}
\end{align}
This closed form is very important for our analysis. We note that $\overline{U}$ has Taylor expansion
\begin{align*}
   \text{for }X\approx 0:\quad \overline{U}(X)&=-X+X^3-3X^5+O(X^7),\\
    \partial_X\overline{U}(X)&=-1+3X^2-15X^4+O(X^5),\\
    \text{for }X\approx\infty:\quad\overline{U}(X)&=-X^\frac{1}{3}+\frac{1}{3}X^{-\frac{1}{3}}-\frac{1}{81}X^{-\frac{5}{3}}+O(X^{-\frac{7}{3}}),\\
    \partial_X\overline{U}(X)&=-\frac{1}{3}X^{-\frac{2}{3}}-\frac{1}{9}X^{-\frac{4}{3}}+\frac{5}{243}X^{-\frac{7}{3}}+O(X^{-\frac{10}{3}}).
\end{align*}
In particular, 
\begin{align}
\overline{U}(0)=0,\quad\partial_X\overline{U}(0)=-1,\quad\partial^2_X\overline{U}(0)=0,\quad\partial^3_X\overline{U}(0)=6.\label{eq:barUat0}
\end{align}
Examining \eqref{eq:barUformula}, we can find that for all $X\in\mathbb{R}$,
\begin{align}
|\overline{U}(X)|\leq (1+X^2)^\frac{1}{6},\quad|\partial_X\overline{U}(X)|\leq (1+X^2)^{-\frac{1}{3}},\quad|\partial^2_X\overline{U}(X)|\leq (1+X^2)^{-\frac{5}{6}}.\label{eq:barUall}
\end{align}
For $|X|\geq 100$,
\begin{align}
\frac{1}{4}(1+X^2)^{-\frac{1}{3}}\leq |\partial_X\overline{U}(X)|\leq  \frac{7}{20}(1+X^2)^{-\frac{1}{3}}.\label{eq:1xbarUfar}
\end{align}
For $|X|\leq \frac{1}{5},$\begin{gather}
\begin{aligned}
|\overline{U}(X)|\leq \frac{1}{6},\quad |\partial_X\overline{U}(X)|\leq 1,\quad |\partial_X^2\overline{U}(X)|\leq 1,\\
|\partial^3_X\overline{U}(X)|\leq 6,\quad
|\partial^4_X\overline{U}(X)|\leq 30,\quad |\partial_X^5\overline{U}(X)|\leq 360.
\end{aligned}\label{eq:barUnear0}
\end{gather}
And for $|X|\geq l$ whenever $0<l<\frac{1}{5}$,
\begin{align}
|\partial_X\overline{U}(X)|\leq (1-2l^2)(1+X^2)^{-\frac{1}{3}}.\label{eq:1xbarUmiddle}
\end{align}

\begin{remark}
Unlike the Burgers equation, the BH equation \eqref{eq:BH} does not have self-similar solutions. To see this, without loss of generality assume the blowup time $T_*=0$ and the blowup location $x_*=0$ since the BH equation is invariant under both time and space translations. Plugging in the ansatz $$u(x,t)=(-t)^\alpha U\Big(\frac{x}{(-t)^\beta}\Big),\qquad t<0,\ \alpha,\beta\text{ not both 0}$$ into \eqref{eq:BH}, and writing $X=x/(-t)^\beta$, we would get
$$ \frac{\alpha}{t}U(X)-\frac{\beta}{t}X\partial_XU(X)+(-t)^{\alpha-\beta}U(X)\partial_XU(X)=H[U](X).$$
But there is no way to eliminate $t$ in the above expression.
\end{remark}

\subsection{Equations of $\partial^n_XU$ for $n=1,...,5$}
Here we record the equations of $\partial^n_XU$ for $n=1,\dots,5$ by differentiating \eqref{eq:ansatz} repeatedly. For convenience, we introduce the transport speed
\begin{align}
V:=\frac{U+e^\frac{s}{2}(\kappa-\dot{\xi})}{1-\dot{\tau}}+\frac{3}{2}X.\label{eq:speed}
\end{align}
Then we have
\begin{align}
    \Big(\partial_s+1+\frac{\partial_XU}{1-\dot{\tau}}\Big)\partial_XU+V\partial^2_XU&=\frac{e^{-s}}{1-\dot{\tau}}H[\partial_XU],\label{eq:1partialU}\\
\big(\partial_s+\frac{3}{2}n-\frac{1}{2}+\frac{n+1}{1-\dot{\tau}}\partial_XU\big)\partial_X^nU+V\partial_X^{n+1}U&=F_{\partial_X^nU},\qquad n=2,3,4,5, \label{eq:npartialU}
\end{align}
where the forcing terms are
\begin{align}
F_{\partial_X^nU}:=\frac{e^{-s}}{1-\dot{\tau}}H[\partial_X^nU]+\mathbbm{1}_{\{n\text{ odd}\}}\frac{1}{1-\dot{\tau}}{n\choose \frac{n-1}{2}}(\partial_X^\frac{n+1}{2}U)^2+\frac{1}{1-\dot{\tau}}\sum_{j=2}^{\lfloor \frac{n}{2}\rfloor}{n+1\choose j}\partial_X^jU\partial_X^{n-j+1}U.
\end{align}
%
%

\subsection{Equations of $\partial^n_X\widetilde{U}$ for $n=0,\dots,4$}
We also need the equation of $\widetilde{U}$ and its derivatives up to the 4-th order. We have
\begin{align}
    \Big(\partial_s-\frac{1}{2}+\frac{\partial_X\overline{U}}{1-\dot{\tau}}\Big)\widetilde{U}+V\partial_X\widetilde{U}&=F_{\tilde{U}},\label{eq:eqtildeU}\\
    \Big(\partial_s+1+\frac{2\partial_X\overline{U}+\partial_X\widetilde{U}}{1-\dot{\tau}}\Big)\partial_X\widetilde{U}+V\partial^2_X\widetilde{U}&=F_{\partial_X\tilde{U}},\label{eq:eq1xtildeU}\\
\big(\partial_s+\frac{3}{2}n-\frac{1}{2}+\frac{n+1}{1-\dot{\tau}}\partial_XU\big)\partial_X^n\widetilde{U}+V\partial_X^5\widetilde{U}&=F_{\partial_X^n\tilde{U}},\qquad n=2,3,4,\label{eq:eqnxtildeU}
\end{align} 
where the forcing terms are 
\begin{align}
F_{\tilde{U}}:=&-\frac{e^{-\frac{s}{2}}\dot{\kappa}}{1-\dot{\tau}}+\frac{e^{-s}}{1-\dot{\tau}}H[U+e^\frac{s}{2}\kappa]-\partial_X\overline{U}\frac{\dot{\tau}\overline{U}+e^\frac{s}{2}(\kappa-\dot{\xi})}{1-\dot{\tau}},\\
F_{\partial_X\tilde{U}}:=&\frac{e^{-s}}{1-\dot{\tau}}H[\partial_XU]-\frac{\dot{\tau}}{1-\dot{\tau}}(\partial_X\overline{U})^2-\Big(\frac{\widetilde{U}+\dot{\tau}\overline{U}}{1-\dot{\tau}}+\frac{e^\frac{s}{2}(\kappa-\dot{\xi})}{1-\dot{\tau}}\Big)\partial^2_X\overline{U},\\
F_{\partial_X^n\tilde{U}}:=&\frac{e^{-s}}{1-\dot{\tau}}H[\partial_X^nU]-\frac{\dot{\tau}}{1-\dot{\tau}}\Big[\sum_{j=0}^{\lfloor \frac{n}{2}\rfloor}{n+1\choose j}\partial_X^j\overline{U}\partial_X^{n-j+1}\overline{U}+{n\choose \frac{n+1}{2}}\mathbbm{1}_{\{n\text{ odd}\}}(\partial_X^\frac{n+1}{2}\overline{U})^2\Big]\label{eq:ntildeUforce}\\
&-\frac{1}{1-\dot{\tau}}\Big[e^\frac{s}{2}(\kappa-\dot{\xi})\partial_X^{n+1}\overline{U}+\sum_{j=0}^{n-1}{n+1\choose j}\partial_X^j\widetilde{U}\partial_X^{n+1-j}\overline{U}\nonumber\\
&+\sum_{j=2}^{\lfloor\frac{n}{2}\rfloor}{n+1 \choose j}\partial_X^j\widetilde{U}\partial_X^{n+1-j}\widetilde{U}+\mathbbm{1}_{\{n\text{ odd}\}}{n\choose \frac{n+1}{2}}(\partial_X^\frac{n+1}{2}\widetilde{U})^2  \Big],\qquad n=2,3,4.\nonumber
\end{align}

\subsection{Constraints on $U$ and equations of dynamic modulation variables}
As mentioned in Section \ref{sec:strategy}, we impose the following constraints at $X=0$
\begin{align}
U(0,s)=0,\quad \partial_XU(0,s)=-1,\quad\partial_X^2U(0,s)=0.
\label{eq:constraints}
\end{align}
Plugging these constraints into \eqref{eq:uUrelation}, \eqref{eq:ansatz}, \eqref{eq:1partialU} and \eqref{eq:npartialU} with $n=2$, we obtain the equations of the dynamic variables
\begin{gather}
\kappa(t)=u\big(\xi(t),t\big),\label{eq:kappa}\\
\dot{\tau}=e^{-s}H[\partial_XU](0,s),\label{eq:tau}\\
e^\frac{s}{2}(\kappa-\dot{\xi})=e^{-\frac{s}{2}}\dot{\kappa}-e^{-s}H[U+e^\frac{s}{2}\kappa](0,s)=\frac{e^{-s}H[\partial_X^2U](0,s)}{\partial^3_XU(0,s)}.\label{eq:kappaxi}
\end{gather}
Recall that
\begin{align*}
    \tau(-\epsilon)=0,\quad\xi(-\epsilon)=0,\quad \kappa(-\epsilon)=\kappa_0=u_0(0),
\end{align*}
and $\tau(T_*)=T_*$. We define $x_*:=\xi(T_*)$ and we will show that it is the unique singular point.

\section{Main results}\label{sec:result}
We introduce a large parameter $M\geq 1$ to be determined later. As we mentioned above, by a time translation we can take $t_0=-\epsilon$ for some small $\epsilon=\epsilon(M)\leq 10^{-4}$ to be determined later, hence the self-similar time $s\geq -\log\epsilon
$. 

We also choose a small scale $l=l(M)=(\log{M})^{-2}$ and a large scale $\frac{1}{2}e^{\frac{3}{2}s}$.
Note that the large scale is $s$-dependent. The self-similar time-dependence of the large scale captures the asymptotic self-similarity.

\subsection{Initial data in physical variables}
We choose the initial data $u_0=u(\cdot,-\epsilon)$ to be such that
\begin{gather}
\|u_0\|_{L^\infty}\leq \frac{1}{2}M,\label{eq:u0infty}\\
\mathrm{supp}\,u_0\subset [-1,1],\label{eq:u0supp}\\
-\|\partial_xu_0\|_{L^\infty}=\min_x \partial_x u_0=\partial_xu_0(0)=-\epsilon^{-1},\\
\partial_x^2u_0(0)=0,\\
\|\partial_x^2u_0\|_{L^\infty}\leq 2\epsilon^{-\frac{5}{2}},\\
|\partial_x^3u_0(0)-6\epsilon^{-4}|\leq \frac{1}{4}\epsilon^{-\frac{15}{4}},\\
\|\partial_x^3u_0\|_{L^\infty}\leq \frac{1}{2}M^\frac{3}{4}\epsilon^{-4},\\
\|\partial_x^5u_0\|_{L^2}\leq \frac{1}{2}M^4\epsilon^{-\frac{25}{4}}.
\end{gather}
In particular, by \eqref{eq:u0infty} and \eqref{eq:u0supp} we have
\begin{align}
\|u_0\|_{L^2}\leq\frac{\sqrt{2}}{2}M.\label{eq:u0L2}
\end{align}
Hence, the initial data $u_0\in H^5(\mathbb{R})$.
\label{sub:initial physical}

\subsection{Initial data in self-similar variable}\label{sub:initialself}
Directly from Section \ref{sub:initial physical}, the transformation \eqref{eq:uUrelation}, \eqref{eq:uUL2relation} and the constraint \eqref{eq:constraints} at $s=-\log\epsilon$, we have
\begin{gather}
\|U(\cdot,-\log\epsilon)+\epsilon^{-\frac{1}{2}}\kappa_0\|_{L^\infty}\leq \frac{1}{2}M\epsilon^{-\frac{1}{2}},\\
\mathrm{supp}\,U(\cdot,-\log\epsilon)+\epsilon^{-\frac{1}{2}}\kappa_0\subset[-\epsilon^{-\frac{3}{2}},\epsilon^{-\frac{3}{2}}],\label{eq:Usupport}\\
U(0,-\log\epsilon)=0,\label{eq:initU0}\\
-\|\partial_XU(\cdot,-\log\epsilon)\|_{L^\infty}=\min_X\partial_XU(X,-\log\epsilon)=\partial_XU(0,-\log\epsilon)=-1,\label{eq:init1xU0}\\
\partial_X^2U(0,-\log\epsilon)=0,\label{eq:init2xU0}\\
\|\partial_X^2U(\cdot,-\log\epsilon)\|_{L^\infty}\leq 2,\label{eq:init2xUfar}\\
|\partial^3_X\widetilde{U}(0,-\log\epsilon)|\leq \frac{1}{4} \epsilon^\frac{1}{4},\label{eq:initial3xtildeU}\\
\|\partial_X^3U(\cdot,-\log\epsilon)\|_{L^\infty}\leq \frac{1}{2}M^\frac{3}{4},\label{eq:init3xUfar}\\
\|\partial_X^5U(\cdot,-\log\epsilon)\|_{L^2}\leq \frac{1}{2}M^4.
\end{gather}
We further impose the following assumptions: for $|X|\leq \frac{1}{2}\epsilon^{-\frac{3}{2}}$, 
\begin{align}
|\widetilde{U}(X,-\log\epsilon)|&\leq \epsilon^\frac{1}{10}(1+X^2)^\frac{1}{6},\label{eq:inittildeUmiddle}\\
|\partial_X\widetilde{U}(X,-\log\epsilon)|&\leq \epsilon^\frac{1}{11}(1+X^2)^{-\frac{1}{3}},\label{eq:init1xtildeUmiddle}\\
|\partial_X^2U(X,-\log\epsilon)|&\leq 2(1+X^2)^{-\frac{1}{3}},\label{eq:init2xUmiddle}\\
|\partial^3_XU(X,-\log\epsilon)|&\leq M^\frac{1}{2}.\label{eq:init3xmiddle}
\end{align}
For $|X|\geq \epsilon^{-\frac{3}{2}}$, we assume\footnote{This is compatible with the assumptions \eqref{eq:init1xtildeUmiddle} at $|X|=\frac{1}{2}\epsilon^{-\frac{3}{2}}$, since by \eqref{eq:1xbarUfar}
\begin{align*}
|\partial_XU(\pm\frac{1}{2}\epsilon^{-\frac{3}{2}},-\log\epsilon)|\leq (\epsilon^\frac{1}{11}+\frac{7}{20})\big(1+\frac{1}{4}\epsilon^{-3}\big)^{-\frac{1}{3}}\leq\frac{2}{5}\cdot 2^\frac{2}{3}\epsilon\leq \frac{3}{4}\epsilon.
\end{align*}
The compatibility of \eqref{eq:init2xUfar} with \eqref{eq:init2xUmiddle} is trivial.} 
\begin{align}
|\partial_XU(X,-\log\epsilon)|\leq \frac{3}{4}\epsilon,\label{eq:init1xUfar}\\
|\partial_X^2U(X,-\log\epsilon)|\leq 
2M^\frac{1}{4}\epsilon.\label{eq:init2xUfar}
\end{align}
For $|X|\leq l$,
\begin{align}
|\partial_X^4\widetilde{U}(X,-\log\epsilon)|\leq \epsilon^\frac{1}{8}.\label{eq:initial4xtildeU}
\end{align}
As a consequence of \eqref{eq:init1xtildeUmiddle},\eqref{eq:init1xUfar},\eqref{eq:barUall}, we have\footnote{The numerical value of $\sqrt{\pi}\Gamma(\frac{1}{6})/\Gamma(\frac{2}{3})$ is 7.28595.}
\begin{align}
\|\partial_XU(\cdot,-\log\epsilon)\|_{L^2}&\leq \Big(\int_{|X|\leq \frac{1}{2}\epsilon^{-\frac{3}{2}}}(1+\epsilon^\frac{1}{11})^2(1+X^2)^{-\frac{2}{3}}\,dX+\int_{\frac{1}{2}\epsilon^{-\frac{3}{2}}\leq|X|\leq \epsilon^{-\frac{3}{2}}}\epsilon^2\,dX\Big)^\frac{1}{2}\\
&\leq \Big((1+\epsilon^\frac{1}{11})^2\int_{\mathbb{R}} (1+X^2)^{-\frac{2}{3}}\,dX+\epsilon^\frac{1}{2}\Big)^\frac{1}{2}\\
&\leq \Big((1+\epsilon^\frac{1}{11})^2\frac{\sqrt{\pi}\Gamma(\frac{1}{6})}{\Gamma(\frac{2}{3})}+\epsilon^\frac{1}{2}\Big)^\frac{1}{2}\leq 2\sqrt{2}
\label{eq:init1xUL2}.
\end{align}
And by Lemma \ref{lem:Sinterpolation}, interpolating between $\partial_XU$ and $\partial_X^5U$, we also get
\begin{align*}
\|\partial_X^jU(\cdot,-\log\epsilon)\|_{L^2}&\leq \|\partial_XU(\cdot,-\log\epsilon)\|_{L^2}^{\frac{5}{4}-\frac{j}{4}}\|\partial_X^5U(\cdot,-\log\epsilon)\|_{L^2}^{\frac{j}{4}-\frac{1}{4}}\leq (2\sqrt{2})^{\frac{5}{4}-\frac{j}{4}}M^{j-1},\quad j=2,3,4.
\end{align*}

\begin{remark} Later we will relax these initial data assumptions in both the physical variables and self-similar variables. We will only require $u_0$ to be close to the initial data given by this and the previous section in the $H^5$-topology, as stated in Corollary \ref{cor:open} below.
\end{remark}

\subsection{Bootstrap assumptions for the self-similar variable}\label{sec:assumptionself}
For $|X|\leq l$, we assume
\begin{gather}
|\partial_X^j\widetilde{U}(X,s)|\leq (\epsilon^\frac{1}{8}+\log M\epsilon^\frac{1}{10})l^{4-j}\leq 2\log M\epsilon^\frac{1}{10}l^{4-j},\quad j=0,1,2,3,\label{eq:xtildeUnear0}\\
|\partial^4_X\widetilde{U}(X,s)|\leq\epsilon^\frac{1}{10}.\label{eq:4xtildeUnear0}
    \end{gather}
At $X=0$, we assume that
    \begin{align}
    |\partial_X^3\widetilde{U}(0,s)|\leq \epsilon^\frac{1}{4}.\label{eq:3xtildeUat0}
    \end{align}
For any $(X,s)$ such that $l\leq |X|\leq\frac{1}{2}e^{\frac{3}{2}s}$, we assume that
\begin{align}
|\widetilde{U}(X,s)|&\leq \epsilon^\frac{1}{11}(1+X^2)^\frac{1}{6},\label{eq:tildeUmiddle}\\
|\partial_X\widetilde{U}(X,s)|&\leq \epsilon^\frac{1}{12}(1+X^2)^{-\frac{1}{3}},\label{eq:1xtildeUmiddle}\\
|\partial_X^2U(X,s)|&\leq M^\frac{1}{4}(1+X^2)^{-\frac{1}{3}},\label{eq:2xUmiddle}\\
|\partial_X^3U(X,s)|&\leq \frac{1}{2}M^\frac{3}{4}\label{eq:3xUmiddle}.
\end{align}
And for any $(X,s)$ such that $|X|\geq \frac{1}{2}e^{\frac{3}{2}s}$, we assume that\footnote{By Lemma \ref{lem:GNSinterpolation}, interpolating between $\|\partial_XU(\cdot,s)\|_{L^\infty}$ and $\|\partial^5_X(\cdot,s)\|_{L^5}$, we can already bound $\|\partial_X^jU(\cdot,s)\|_{L^\infty}$ for $j=2,3,4$. But we will not be able to close the bootstrap when doing the $L^2$-estimate of $\partial_X^5U$ if we only use interpolation. So we must impose a better (in terms of the power of $M$) $L^\infty$-bound on $\partial_X^3U$.}
\begin{align}
|\partial_XU(X,s)|&\leq 2e^{-s},\label{eq:1xUfar}\\
|\partial^2_XU(X,s)|&\leq 4M^\frac{1}{4}e^{-s},\label{eq:2xUfar}\\
|\partial^3_XU(X,s)|&\leq M^\frac{3}{4}\label{eq:3xUfar}.
\end{align}
We also assume that
\begin{align}
\|\partial_XU(\cdot,s)\|_{L^2}&\leq 3,\label{eq:1xUL2}\\
    \|\partial_X^5U(\cdot,s)\|_{L^2}&\leq M^{4},\label{eq:5xUL2}\\
    \|U(\cdot,s)+e^\frac{s}{2}\kappa\|_{L^\infty}&\leq Me^\frac{s}{2}.\label{eq:ULinfty}
\end{align}

\subsection{Bootstrap assumptions for the dynamic modulation variables}\label{sec:assumptiondynamic}
We assume that
\begin{align}
|\dot{\tau}(t)|\leq e^{-\frac{3}{4} s},\quad |\tau(t)|\leq 2\epsilon^\frac{7}{4},\quad |T_*|\leq 2\epsilon^\frac{7}{4}.\label{eq:taubound}
\end{align}
We will show in Section \ref{sec:blowuptime} that, as a result, $\tau(t)>t$ for all $t\in[-\epsilon,T_*)$. 

For the blowup location, we assume that
\begin{align}
|\dot{\xi}(t)|\leq 2M,\quad |\xi(t)|\leq 3M\epsilon.\label{eq:xibound}
\end{align}

We do not need to make assumptions on the wave amplitude $\kappa$, because first, in our closure of bootstrap, we can eliminate it using \eqref{eq:kappaxi}; second, by \eqref{eq:ULinfty}, $$\|u(\cdot,t)\|_{L^\infty}=\|e^{-\frac{s}{2}}
U(\cdot,s)+\kappa(t)\|_{L^\infty}\leq M$$ for all $t\in[-\epsilon,T_*]$, and by \eqref{eq:kappa},
we must have $|\kappa(t)|\leq M$.

\subsection{Consequence of bootstrap assumptions}\label{sec:assumptionconse}
By \eqref{eq:3xtildeUat0} and \eqref{eq:barUat0},
\begin{equation}
|\partial^3_XU(0,s)-6|\leq \epsilon^\frac{1}{4}<1\quad
\implies \quad 5<\partial^3_XU(0,s)<7.\label{eq:3xUat0}
\end{equation}
By \eqref{eq:barUall} and \eqref{eq:tildeUmiddle}, for $(X,s)$ such that $l\leq |X|\leq \frac{1}{2}e^{\frac{3}{2}s}$, we have
\begin{align}
|U(X,s)|\leq |\overline{U}(X)|+|\widetilde{U}(X,s)|\leq (1+\epsilon^\frac{1}{11})(1+X^2)^\frac{1}{6}.\label{eq:Umiddle}
\end{align}
By \eqref{eq:barUall}, \eqref{eq:xtildeUnear0} with $j=2,3$, \eqref{eq:2xUmiddle}, \eqref{eq:3xUmiddle},
\eqref{eq:2xUfar} and \eqref{eq:3xUfar}
\begin{align}
\|\partial_X^2U(\cdot,s)\|_{L^\infty}&\leq M^\frac{1}{4},\nonumber \\
\|\partial_X^3U(\cdot,s)\|_{L^\infty}&\leq M^\frac{3}{4}\label{eq:3xULinfty}.
\end{align}
By Lemma \ref{lem:Sinterpolation}, interpolating between $\partial_XU$ and $\partial_X^5U$, we also have
\begin{align}
\|\partial_X^jU(\cdot,s)\|_{L^2}&\leq \|\partial_XU(\cdot,s)\|_{L^2}^{\frac{5}{4}-\frac{j}{4}}\|\partial_X^5U(\cdot,s)\|_{L^2}^{\frac{j}{4}-\frac{1}{4}}\leq 3^{\frac{5}{4}-\frac{j}{4}}M^{j-1},\quad j=2,3,4.\label{eq:xUL2}
\end{align} 
We also claim that
\begin{align}
\|\partial_XU(\cdot,s)\|_{L^\infty}=1=-\partial_XU(0,s),\label{cor:1xULinfty}
\end{align}
with extremum attained uniquely at $X=0$.
This is indeed true for $(X,s)$ such that $|X|\geq \frac{1}{2}e^{\frac{3}{2}s}$ by \eqref{eq:1xUfar}. For $(X,s)$ such that $l\leq |X|\leq \frac{1}{2}e^{\frac{3}{2}s}$, by \eqref{eq:1xtildeUmiddle} and \eqref{eq:1xbarUmiddle},
\begin{equation}
\begin{aligned}
|\partial_XU(X,s)|&\leq |\partial_X\widetilde{U}(X,s)|+|\partial_X\overline{U}(X)|\\
&\leq (\epsilon^\frac{1}{12}+1-2l^2)(1+X^2)^{-\frac{1}{3}}\\
&\leq (1+l^2)^{-\frac{1}{3}}(1+X^2)^{-\frac{1}{3}}\leq(1+X^2)^{-\frac{1}{3}}.
\end{aligned}\label{eq:1xUmiddle}
\end{equation}
For $|X|\leq l$, we use the constraint \eqref{eq:constraints}
and consider the Taylor expansion of $\partial_XU$ at $X=0$:
\begin{align*}
\partial_XU(X,s)&=-1+\frac{1}{2}\partial^3_XU(0,s)X^2+\frac{1}{6}\partial^4_X(X',s)X^3\\
&=-1+\frac{1}{2}X^2\big[\partial^3_XU(0,s)+\frac{1}{3}\partial^4_XU(X',s)X\big]
\end{align*}
for some $X'$ between 0 and $X$, in particular, $|X'|< l$. By \eqref{eq:3xUat0}, \eqref{eq:xtildeUnear0} with $j=1$ and $l,\epsilon$ sufficiently small, we have for $0< |X|\leq l$,
\begin{align*}
\partial_XU(X,s)&\geq -1+\frac{1}{2}X^2(5-20l)\geq -1+\frac{1}{2}X^2> -1,\\
\partial_XU(X,s)&\leq -1+\frac{1}{2}X^2(7+20l)\leq -1+6l^2\leq-\frac{1}{2}. 
\end{align*}
We thus prove the claim.

\subsection{Statement of the theorems}
\begin{theorem}[Self-similar profile]\label{thm:self}
There exists a sufficiently large $M>200$ and a sufficiently small $0<\epsilon=\epsilon(M)<10^{-4}$, such that the equation \eqref{eq:ansatz} with initial data $U(\cdot,-\log\epsilon)$ given in Section \ref{sub:initialself} has a unique solution $U\in\mathcal{C}\big([-\log\epsilon,+\infty);\mathcal{C}^4\cap H^5(\mathbb{R})\big)$ that satisfies the conditions given in Section \ref{sec:assumptionself}-\ref{sec:assumptionconse}. Moreover, the limit $\nu:=\lim_{s\to\infty}\partial^3_XU(0,s)$ exists, and for each $X\in\mathbb{R}$, we have
\begin{align*}
\lim_{s\to\infty}U(X,s)=\big(\frac{\nu}{6}\big)^{-\frac{1}{2}}\overline{U}\Big(\big(\frac{\nu}{6}\big)^\frac{1}{2}X\Big):=\overline{U}_\nu(X).
\end{align*}
\end{theorem}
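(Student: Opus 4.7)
My plan is a bootstrap argument in self-similar time $s$. Starting from the fixed initial data at $s_0=-\log\epsilon$ described in Section \ref{sub:initialself}, I would assume that a solution exists in $C^4\cap H^5$ on some forward interval and satisfies all the bounds listed in Sections \ref{sec:assumptionself}--\ref{sec:assumptionconse}, then improve each bound by a strict multiplicative factor. Combined with the local well-posedness of Appendix A and continuous dependence in $H^5$, this extends the solution to $[-\log\epsilon,\infty)$ with all the quantitative properties. The convergence statement in the theorem is then proved \emph{a posteriori} from these bounds.

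First, I would establish the $L^2$-level estimates (Step 1). The equation \eqref{eq:ansatz} for the subtracted quantity $U+e^{s/2}\kappa$ (which, unlike $U$ itself, decays at infinity since it matches the initial support condition \eqref{eq:Usupport}), together with \eqref{eq:1partialU} for $\partial_X U$ and \eqref{eq:5partialU} for $\partial_X^5U$, yields a Grönwall-type inequality. The damping coefficients $1$ and $7$ in front of $\partial_XU$ and $\partial_X^5U$ beat the $\partial_XU/(1-\dot\tau)$ production terms once $\dot\tau$ is controlled by \eqref{eq:taubound}, and the Hilbert pieces are absorbed using the $L^2$ isometry and the $e^{-s}$ prefactor. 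The intermediate $L^2$ norms \eqref{eq:2xUL2}--\eqref{eq:4xUL2} then come from Gagliardo--Nirenberg interpolation, and in particular one obtains a global $H^5$ solution.

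Next, close the pointwise bounds regime by regime. In the near-zero zone $|X|\le l$, use \eqref{eq:eq4xtildeU}, whose damping coefficient $11/2$ dominates everything, to obtain \eqref{eq:4xtildeUnear0} for $\partial_X^4\tilde U$; integrate repeatedly via the Fundamental Theorem of Calculus together with the constraints \eqref{eq:constraints} at $X=0$ to recover \eqref{eq:tildeUnear0}--\eqref{eq:3xtildeUat0}. In the middle zone $l\le|X|\le L(s):=e^{3s/2}/2$, the transport field $V$ in \eqref{eq:speed} satisfies $\partial_XV(0,s)=3/2+\partial_XU(0,s)/(1-\dot\tau)>0$-type estimates away from $0$ since $\partial_XU(0,s)=-1$, so Lagrangian trajectories escape from $0$ exponentially. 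Weighted Grönwall estimates along these trajectories, with weight $(1+X^2)^{-1/3}$ where appropriate, give \eqref{eq:tildeUmiddle}--\eqref{eq:3xUmiddle}; the crucial point is that the Hilbert forcings are multiplied by $e^{-s}$, which absorbs the $\log(1+X^2)\lesssim s$ factor produced by the pointwise Hilbert bound. In the far zone $|X|\ge L(s)$, one cannot rely on spatial decay; instead establish $L^\infty$ control of $U+e^{s/2}\kappa$ and the temporal-decay bounds \eqref{eq:1xUfar}--\eqref{eq:3xUfar}, exploiting the observation in the remark that the far-zone bound produces a $se^{-s}$ term that is both $s$-integrable and small uniformly in $X$. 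The modulation ODEs \eqref{eq:tau}--\eqref{eq:kappaxi} are closed in parallel by plugging the pointwise Hilbert bounds at $X=0$ into direct time integration to yield \eqref{eq:taubound}--\eqref{eq:xibound}.

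Finally, to prove pointwise convergence to $\overline{U}_\nu$: after the bootstrap closes, show that $s\mapsto\partial_X^3U(0,s)$ is Cauchy by evaluating \eqref{eq:3partialU} at $X=0$, using the constraints \eqref{eq:constraints} to kill the $V\partial_X^4U$ term, and observing that every remaining forcing term decays at some positive rate in $s$; this defines $\nu$. Then set $\tilde U_\nu:=U-\overline{U}_\nu$; by the constraints and the construction of $\nu$, the quantities $\tilde U_\nu(0,s)$, $\partial_X\tilde U_\nu(0,s)$, $\partial_X^2\tilde U_\nu(0,s)$, and $\partial_X^3\tilde U_\nu(0,s)$ all tend to $0$ as $s\to\infty$. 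Taylor-expanding $\tilde U_\nu$ at $X=0$ and propagating along the Lagrangian flow of the $\tilde U_\nu$-equation extends the convergence from $X=0$ to every $X\in\mathbb{R}$. The hardest part of this program is not any single estimate but the three-zone decomposition itself, forced by the non-locality of $H$: because $(1+X^2)^{1/3}$ is unbounded and not even in BMO, no commutator argument can give uniform spatial decay of $\partial_XU$ as in \cite{B-S-V2}, and the $\log$ factor in the pointwise Hilbert bound has to be defeated by temporal decay (by $e^{-s}$ in the middle zone, by a fractional power in the far zone), which constrains how much weight one can carry in each zone and forces the bootstrap structure in Sections \ref{sec:assumptionself}--\ref{sec:assumptionconse}.
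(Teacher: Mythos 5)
Your proposal follows essentially the same route as the paper: $L^2$ estimates, a three-zone bootstrap (near zero via $\partial_X^4\widetilde U$ and repeated integration, middle zone via weighted Gr\"onwall estimates along exponentially escaping Lagrangian trajectories with the $e^{-s}$ prefactor defeating the logarithm from the Hilbert bound, far zone via temporal decay), closure of the modulation ODEs, and an a posteriori convergence proof through integrability of $\partial_s\partial_X^3U(0,s)$ followed by propagation of the Taylor expansion of $\widetilde U_\nu$ along the flow of $U+\tfrac{3}{2}X$. One small correction: at $X=0$ the transport term is not killed by the constraints \eqref{eq:constraints}, since $V(0,s)=e^{\frac{s}{2}}(\kappa-\dot\xi)/(1-\dot\tau)$ is nonzero; it is only exponentially small thanks to the modulation relation \eqref{eq:kappaxi} together with the bound on $\partial_X^4\widetilde U(0,s)$, after which it joins the other decaying forcing terms and your argument goes through exactly as in the paper.
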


\begin{theorem}[Main result]
There exists 
a sufficiently large $M>200$ and a sufficiently small $0<\epsilon=\epsilon(M)<10^{-4}$ such that for smooth initial data $u_0$ at $t_0=-\epsilon$ given in Section \ref{sub:initial physical} and Section \ref{sub:initialself} after the self-similar transformation \eqref{eq:selftransform}, \eqref{eq:uUrelation}, there exists a unique solution $u \in \mathcal{C}\big([-\epsilon,T_*);\mathcal{C}^4\cap H^5(\mathbb{R})\big)$ to the BH equation such that the following hold
\begin{enumerate}[leftmargin=*]
\item The blowup time $T_*$ satisfies $|T_*|\leq 2\epsilon^\frac{7}{4}$.
\item $\|u(\cdot,t)\|_{L^\infty}\leq M$ for all $t\in [-\epsilon,T_*]$.
\item The blowup location $x_*:=\xi(T_*)$ satisfies $|x_*|\leq 3M\epsilon$ (which is small from the proof).
\item $\lim_{t\to T_*}\partial_xu\big(\xi(t),t\big)=-\infty$, and as $t\to T_*$,
\begin{align*}
\frac{1}{2(T_*-t)}\leq |\partial_xu\big(\xi(t),t\big)|=\|\partial_xu(\cdot,t)\|_{L^\infty}\leq \frac{2}{T_*-t}.
\end{align*}
For any $x\neq x_*$, $\partial_xu(x,t)$ remains finite as $t\to T_*$. Moreover, for $t$ sufficiently close to $T_*$ (depending on $x$), if $|x-x_*|>\frac{1}{2}$, then $|\partial_xu(x,t)|\leq 2$; if $|x-x_*|< \frac{1}{2}$, then $|\partial_xu(x,t)|\sim |x-x_*|^{-\frac{2}{3}}$. In particular, $u(\cdot,T_*)\in\mathcal{C}^\frac{1}{3}(\mathbb{R})$, and $u(\cdot,T_*)$ has a cusp-singularity at $x_*$.
\end{enumerate}\label{thm:main}
\end{theorem}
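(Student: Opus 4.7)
The plan is to invert the self-similar transformation \eqref{eq:selftransform}--\eqref{eq:uUrelation} and read off each conclusion from Theorem~\ref{thm:self} together with the bootstrap bounds on $\tau,\xi,\kappa$. Local well-posedness from Appendix~A gives a maximal smooth $u \in \mathcal{C}([-\epsilon,T_*);\mathcal{C}^4 \cap H^5)$. Claim~(1) is immediate: integrating the bootstrap bound $|\dot\tau(t)|\le e^{-3s/4}=(\tau(t)-t)^{3/4}$ from $-\epsilon$ (where $\tau=0$) up to $T_*$ (where $\tau(T_*)=T_*$) yields $|T_*|\le 2\epsilon^{7/4}$. Claim~(2) follows from \eqref{eq:ULinfty}, since $\|u(\cdot,t)\|_{L^\infty}=e^{-s/2}\|U(\cdot,s)+e^{s/2}\kappa\|_{L^\infty}\le M$. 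Claim~(3) is just \eqref{eq:xibound} evaluated at $T_*$.

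For claim~(4), differentiating $u(x,t) = e^{-s/2} U(X,s) + \kappa(t)$ in $x$ gives $\partial_x u = e^s \partial_X U$. At $x=\xi(t)$, the constraint $\partial_X U(0,s)=-1$ directly produces $\partial_x u(\xi(t),t) = -e^s = -1/(\tau(t)-t)$. Using $|\dot\tau|\le e^{-3s/4}$ and $\tau(T_*)=T_*$ shows $\tau(t)-t = (T_*-t)(1+o(1))$, giving the claimed bounds $\tfrac{1}{2(T_*-t)}\le |\partial_x u(\xi(t),t)|\le \tfrac{2}{T_*-t}$; that this coincides with the $L^\infty$-norm follows from \eqref{cor:1xULinfty}. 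For fixed $x\ne x_*$, the variable $X(t)=(x-\xi(t))/(\tau(t)-t)^{3/2}$ diverges to $\pm\infty$ as $t\to T_*$. If $|x-x_*|>1/2$ then eventually $|X(t)|\ge \tfrac{1}{2} e^{3s/2}$, and the far-range bound \eqref{eq:1xUfar} gives $|\partial_x u|=e^s|\partial_X U|\le 2$. If $0<|x-x_*|<1/2$ then $X(t)$ lies in the middle range, where \eqref{eq:1xUmiddle} yields $|\partial_X U(X,s)|\le (1+X^2)^{-1/3}$, while the lower bound \eqref{eq:1xbarUfar} combined with the smallness \eqref{eq:1xtildeUmiddle} of $\partial_X\widetilde{U}$ furnishes a matching lower bound. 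Substituting the expression for $X(t)$ and using $e^s(\tau(t)-t)=1$, the powers of $\tau(t)-t$ cancel to give $|\partial_x u(x,t)|\sim |x-\xi(t)|^{-2/3}\to |x-x_*|^{-2/3}$.

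For the H\"older $1/3$ regularity and the explicit cusp structure, the middle-range bound \eqref{eq:Umiddle} applied to $u(x,t)-\kappa(t)=e^{-s/2}U(X,s)$, after substituting $X=(x-\xi(t))/(\tau(t)-t)^{3/2}$ and cancelling powers of $\tau(t)-t$, produces the uniform-in-$t$ estimate $|u(x,t)-\kappa(t)|\lesssim |x-\xi(t)|^{1/3}$, so $u(\cdot,T_*)\in\mathcal{C}^{1/3}(\mathbb{R})$. To pin down the precise cusp profile I would combine the pointwise convergence $U(X,s)\to\overline{U}_\nu(X)$ with the rescaled large-$X$ asymptotic $\overline{U}_\nu(X)=-(\nu/6)^{-1/3}\,\mathrm{sgn}(X)|X|^{1/3}+O(|X|^{-1/3})$ inherited from \eqref{eq:barUformula}, and pass to the limit in $e^{-s/2}U(X(t),s)$ to obtain $u(x,T_*)-\kappa(T_*) = -(\nu/6)^{-1/3}(x-x_*)^{1/3}(1+o(1))$ as $x\to x_*$. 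The main technical subtlety is that Theorem~\ref{thm:self} provides pointwise convergence at \emph{fixed} $X$, whereas $X(t)\to\infty$ along the trajectory of a fixed $x$, so I would split $U(X(t),s)=\overline{U}_\nu(X(t))+\widetilde{U}_\nu(X(t),s)$ and bound the remainder uniformly along the trajectory using the middle-range closeness estimate \eqref{eq:tildeUmiddle} (rescaled for $\overline{U}_\nu$). This coupling between the convergence rate and the divergence of $X(t)$ is the main obstacle in the proof of~(4).
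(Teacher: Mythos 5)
Your proposal is correct and follows essentially the same route as the paper: invert the self-similar transformation, read claims (1)--(3) off the bootstrap bounds on $\tau,\xi,\kappa$ and \eqref{eq:ULinfty}, and obtain (4) from $\partial_xu=e^s\partial_XU$ together with the constraint $\partial_XU(0,s)=-1$, the comparability $\tfrac12\le(\tau(t)-t)/(T_*-t)\le 2$, and the far/middle-range bounds \eqref{eq:1xUfar}, \eqref{eq:1xUmiddle}, \eqref{eq:1xtildeUmiddle}, \eqref{eq:1xbarUfar}. Your final refinement identifying the exact cusp coefficient $(\nu/6)^{-1/3}$ goes beyond what the theorem asserts (the paper stops at $|\partial_xu|\sim|x-x_*|^{-2/3}$, which already gives the $\mathcal{C}^{1/3}$ cusp), so the subtlety you flag there is not needed to close the proof.
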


\begin{remark}
From the proof to the local well-posedness Theorem \ref{ap:wellposed}, the minimum blowup rate of the $H^k$-norm for $k>3/2$ is $(T_*-t)^{-1}$, which is exactly the rate given in Theorem \ref{thm:main}.
\end{remark}

\begin{corollary}[Open set of initial data]\label{cor:open}
There exists an open neighborhood in the $H^5$-topology of the set of initial data satisfying the hypothesis in Theorem \ref{thm:main} such that the conclusions of Theorem \ref{thm:main} hold.
\end{corollary}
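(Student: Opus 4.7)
The plan is to exploit two facts: every strict inequality listed in Sections \ref{sub:initial physical}--\ref{sub:initialself} is stated with positive slack (the initial constants are always strictly stronger than the corresponding bootstrap constants), and the remaining equality-type constraints at the origin --- namely $\partial_xu_0(0)=-\epsilon^{-1}$ attained as a minimum, $\partial_x^2u_0(0)=0$, and the associated self-similar normalizations $U(0,s_0)=0,\ \partial_XU(0,s_0)=-1,\ \partial_X^2U(0,s_0)=0$ --- can be re-imposed on any small $H^5$-perturbation of $u_0$ by adjusting the initial values of the modulation parameters $(\xi,\tau,\kappa)$ at $t=-\epsilon$.

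First, I would observe that via the embedding $H^5(\mathbb{R})\hookrightarrow\mathcal{C}^4(\mathbb{R})$, every pointwise, $L^\infty$, and $L^2$ bound on derivatives up to order five in Section \ref{sub:initial physical} is an open condition in the $H^5$-topology. The compact-support hypothesis \eqref{eq:u0supp} is not open, but it enters the proof only through the strict $L^2$-bound \eqref{eq:u0L2}; replacing \eqref{eq:u0supp} by the open condition $\|u_0\|_{L^2}\leq \tfrac{\sqrt 2}{2}M$ leaves every later estimate untouched.

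Next, I would perform the modulation adjustment. For $\tilde u_0$ with $\|\tilde u_0-u_0\|_{H^5}$ sufficiently small (depending on the fixed pair $(M,\epsilon)$), the bound $|\partial_x^3u_0(0)-6\epsilon^{-4}|\leq \tfrac14\epsilon^{-15/4}$ keeps $\partial_x^3\tilde u_0$ bounded below by roughly $5\epsilon^{-4}$ on a small neighborhood of $0$. The implicit function theorem then produces a unique $\tilde x_0$ near $0$ with $\partial_x^2\tilde u_0(\tilde x_0)=0$; by continuity this $\tilde x_0$ is still the location of the minimum of $\partial_x\tilde u_0$, and $\partial_x\tilde u_0(\tilde x_0)$ is close to $-\epsilon^{-1}$. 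Redefine the initial moduli by
\begin{equation*}
\xi(-\epsilon):=\tilde x_0,\qquad \kappa(-\epsilon):=\tilde u_0(\tilde x_0),\qquad \tau(-\epsilon)+\epsilon:=-\frac{1}{\partial_x\tilde u_0(\tilde x_0)}.
\end{equation*}
The new initial self-similar time $s_0:=-\log(\tau(-\epsilon)+\epsilon)$ is $O(\|\tilde u_0-u_0\|_{H^5})$-close to $-\log\epsilon$, and the three constraints \eqref{eq:constraints} at $s=s_0$ hold by construction. (Starting the bootstrap at $s_0$ instead of $-\log\epsilon$ is only a cosmetic change, since all estimates in Sections \ref{sec:L2estimate}--\ref{sec:closure} depend on $s_0$ continuously.)

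Finally, I would verify condition-by-condition that $U(\cdot,s_0)$ derived from $\tilde u_0$ via \eqref{eq:uUrelation} still satisfies every bound of Section \ref{sub:initialself}. The essential observation is that each such bound carries either a strictly smaller constant or a strictly smaller power of $\epsilon$ than the bootstrap counterpart of Section \ref{sec:assumptionself}: compare \eqref{eq:inittildeUmiddle} with \eqref{eq:tildeUmiddle}, \eqref{eq:init1xtildeUmiddle} with \eqref{eq:1xtildeUmiddle}, \eqref{eq:initial3xtildeU} with \eqref{eq:3xtildeUat0}, and \eqref{eq:initial4xtildeU} with \eqref{eq:4xtildeUnear0}. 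Since the transformation \eqref{eq:uUrelation} is continuous in $u_0$ (in $H^5$) and smooth in $(\xi,\tau,\kappa)$, a sufficiently small simultaneous perturbation of $u_0$ and of the initial moduli preserves every assumption. Once Section \ref{sub:initialself} is in force for $\tilde u_0$, Theorem \ref{thm:main} applies verbatim and delivers the stated blowup, with perturbed blowup time $\tilde T_*$ and location $\tilde x_*=\xi(\tilde T_*)$ being the limits of the modulated parameters launched from the shifted initial data. The only real obstacle is bookkeeping: the self-similar rescaling introduces amplification factors $\epsilon^{-c}$ on each derivative, so the $H^5$-neighborhood radius must be chosen small relative to the already-fixed $\epsilon=\epsilon(M)$; no new analytic estimate is required because the slack built into Section \ref{sub:initialself} absorbs these factors.
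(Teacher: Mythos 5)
Your proposal is correct and takes essentially the same route as the paper's proof: openness of all inequality-type hypotheses thanks to the built-in slack and the $H^5\hookrightarrow\mathcal{C}^4$ embedding, relaxation of the compact-support condition (used only for the $L^2$ bound and the far-range/large-scale bookkeeping), and restoration of the pointwise normalizations at the origin by locating the nearby minimum of the slope via the nondegeneracy $\partial_x^3u_0(0)\approx 6\epsilon^{-4}$ and re-initializing the modulation variables. The only cosmetic difference is that you absorb the perturbed minimum slope by adjusting $\tau(-\epsilon)$ (hence the starting self-similar time $s_0$), whereas the paper instead exploits that $\epsilon$ itself ranges over an open interval and shifts coordinates $x\mapsto x-x_0$; the two devices are interchangeable.
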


\begin{remark}
The open set of initial data is possible thanks to the stability of $\overline{U}$. Also see Remark \ref{rem:future}.
\end{remark}

\section{$L^2$-estimates}\label{sec:L2estimate}
In this section, we close the bootstrap assumptions \eqref{eq:1xUL2} and \eqref{eq:5xUL2}. Here we note that by choosing $\epsilon$ sufficiently small, we can make $1/(1-\dot{\tau})$ as close to 1 as we need. A more precise statement and its short proof can be found in Lemma \ref{lem:1-dottau}.

\begin{proposition}[Evolution of $\|U(\cdot,s)+e^\frac{s}{2}\kappa\|_{L^2}$]
We have
\begin{align*}
\|U(\cdot,s)+e^\frac{s}{2}\kappa\|_{L^2}\leq \frac{\sqrt{2}}{2}M\epsilon^\frac{1}{4}e^{\frac{5}{4}s}.\label{prop:UL2}
\end{align*}
\end{proposition}

\begin{proof}
Conservation of $\|u(\cdot,t)\|_{L^2}$ for sufficiently smooth solution $u$ to the BH equation is proved in Lemma \ref{lem:conservation}. By \eqref{eq:uUL2relation} and the $L^2$-conservation, we have
\begin{align*}
\|U(\cdot,s)+e^\frac{s}{2}\kappa\|_{L^2}=e^{\frac{5}{4}s}\|u(t)\|_{L^2}&=e^{\frac{5}{4}s}\|u_0\|_{L^2}.
\end{align*}
Then by \eqref{eq:u0L2}, we get
\begin{align*}
\|U(\cdot,s)+e^\frac{s}{2}\kappa\|_{L^2}\leq \frac{\sqrt{2}}{2}Me^{\frac{5}{4}s}.
\end{align*}
\end{proof}

\begin{proposition}[Uniform $L^2$-bound on $\partial_XU$]
We have $\|\partial_XU(\cdot,s)\|_{L^2}\leq 3$.\label{prop:1L2}
\end{proposition}

\begin{proof}
Multiplying \eqref{eq:1partialU} by $\partial_XU$ and integrating, we get
\begin{align*}
\frac{1}{2}\frac{d}{ds}\|\partial_XU\|_{L^2}^2&+\|\partial_XU\|_{L^2}^2+\frac{1}{1-\dot{\tau}}\int_\mathbb{R}(\partial_XU)^3\,dX\\
&+\frac{1}{1-\dot{\tau}}\int_\mathbb{R}\big(U+e^\frac{s}{2}(\kappa-\dot{\xi})\big)\partial_X^2U\partial_XU\,dX+\frac{3}{2}\int_\mathbb{R}X\partial^2_XU\partial_XU\,dX=0,
\end{align*} 
where the right hand side vanishes due to the orthogonality of Hilbert transform. Integration by parts yields\footnote{Even though we will not prove a uniform spatial decay of $\partial_XU$, since $\partial_XU$ is a priori $L^2$-integrable, $(\partial_XU)^2$ must be able to beat $X$ at infinity, i.e.\ 
\begin{align*}
\lim_{X\to\infty}X\partial_XU(X,s)^2=0\quad\text{for every fixed }s.
\end{align*}
So the second integration by parts is justified. The first one can be justified using \eqref{eq:ULinfty}, \eqref{eq:xibound}
\begin{align*}
\lim_{X\to\infty}|U+e^\frac{s}{2}\kappa-e^\frac{s}{2}\dot{\xi}|\partial_XU(X,s)^2\leq 3Me^\frac{s}{2}\lim_{X\to\infty}\partial_XU(X,s)^2=0\quad\text{for every fixed }s.
\end{align*}}
\begin{gather*}
\int_\mathbb{R}\big(U+e^\frac{s}{2}(\kappa-\dot{\xi})\big)\partial_X^2U\partial_XU\,dX=-\frac{1}{2}\int_\mathbb{R}(\partial_XU)^3\,dX,\\
\int_\mathbb{R}X\partial^2_XU\partial_XU\,dX=-\frac{1}{2}\|\partial_XU\|_{L^2}^2.
\end{gather*}
Hence, upon rewriting the equation for $\|\partial_XU\|_{L^2}$, then using \eqref{eq:1xUmiddle}, \eqref{eq:1xUfar} and Lemma \ref{lem:1-dottau}, we have
\begin{align*}
\frac{d}{ds}\|\partial_XU\|_{L^2}^2+\frac{1}{2}\|\partial_XU\|_{L^2}^2&=-\frac{1}{1-\dot{\tau}}\int_\mathbb{R}(\partial_XU)^3\,dX\\
&\leq (1+2\epsilon^\frac{3}{4})\Big[\int_{|X|\leq \frac{1}{2}e^{\frac{3}{2}s}}(1+X^2)^{-1}\,dX+\int_{|X|\geq \frac{1}{2}e^{\frac{3}{2}s}}2e^{-s}\partial_XU(X,s)^2\,dX\Big]\\
&\leq (1+2\epsilon^\frac{3}{4})\big(2\arctan{(\frac{1}{2}e^{\frac{3}{2}s})}+2\epsilon\|\partial_XU\|_{L^2}^2\big)\\
&\leq \frac{5}{4}\big(\pi+\frac{1}{1000}\|\partial_XU\|_{L^2}\big),\\
\implies\qquad\frac{d}{ds}\|\partial_XU\|_{L^2}^2&+\frac{9}{20}\|\partial_XU\|_{L^2}^2\leq 4.
\end{align*}
Using Gr\"onwall's inequality we get
\begin{align*}
\|\partial_XU(\cdot,s)\|_{L^2}^2&\leq \|\partial_XU(\cdot,-\log\epsilon)\|_{L^2}^2e^{-\frac{9}{20}(s+\log\epsilon)}+4\int_{-\log\epsilon}^se^{-\frac{9}{20}(s-s')}\,ds'\\
&=\frac{80}{9}+\Big(\|\partial_XU(\cdot,-\log\epsilon)\|_{L^2}^2-\frac{80}{9}\Big)e^{-\frac{9}{20}(s+\log\epsilon)}.
\end{align*}
Since $\|\partial_XU(\cdot,-\log\epsilon)\|_{L^2}^2\leq 8<\frac{80}{9}$ from \eqref{eq:init1xUL2}, the right hand side is increasing and tends to $\frac{80}{9}$. Hence, 
\begin{align*}
\|\partial_XU(\cdot,s)\|_{L^2}^2\leq \frac{80}{9} <9,
\end{align*}
and we close the bootstrap \eqref{eq:1xUL2}.
\end{proof}

\begin{proposition}[Uniform $L^2$-bound on $\partial_X^5U$]
We have $\|\partial_X^5U(\cdot,s)\|_{L^2}\leq M^4$.
\end{proposition}

\begin{proof}
Taking $L^2$ inner product of \eqref{eq:npartialU}, $n=5$, with $\partial^5_XU$, integrating by parts, then using \eqref{eq:3xULinfty}, \eqref{eq:xUL2} with $j=4$ and Lemma \ref{lem:1-dottau}, we get
\begin{align*}
\frac{1}{2}\frac{d}{ds}\|\partial^5_XU\|_{L^2}^2+7\|\partial^5_XU\|_{L^2}^2+\frac{6}{1-\dot{\tau}}\int_\mathbb{R}\partial_XU(\partial_X^5U)^2\,dX&-\frac{1}{2(1-\dot{\tau})}\int_\mathbb{R}\partial_XU(\partial_X^5U)^2\,dX\\-\frac{3}{4}\|\partial^5_XU\|_{L^2}^2&=\frac{1}{1-\dot{\tau}}\big(20+\frac{15}{2}\big)\int_\mathbb{R}\partial_X^3U(\partial^4_XU)^2\,dX\\
\frac{d}{ds}\|\partial^5_XU\|_{L^2}^2+\frac{25}{2}\|\partial_X^5U\|_{L^2}^2+\frac{11}{1-\dot{\tau}}\int_\mathbb{R}\partial_XU(\partial^5_XU)^2\,dX&=\frac{55}{1-\dot{\tau}}\int_\mathbb{R}\partial_X^3U(\partial_X^4U)^2\,dX\\
\frac{d}{ds}\|\partial^5_XU\|_{L^2}^2+\frac{5}{4}\|\partial_X^5U\|_{L^2}^2\leq \frac{55}{1-\dot{\tau}}\|\partial_X^3U\|_{L^\infty}\|\partial_X^4U\|_{L^2}^2&\leq 60\sqrt{3}M^{6+\frac{3}{4}}\\
\implies\qquad \|\partial_X^5U(\cdot,s)\|_{L^2}^2\leq 48\sqrt{3}M^\frac{27}{4}+\Big(\|\partial_X^5U(\cdot,-\log\epsilon)\|_{L^2}^2&-48\sqrt{3}M^{\frac{27}{4}}\Big)e^{-\frac{5}{4}(s+\log\epsilon)}.
\end{align*}
If $\|\partial^5_XU(\cdot,-\log\epsilon)\|^2_{L^2}\leq 48\sqrt{3}M^\frac{27}{4}$, then $\|\partial^5_XU(\cdot,s)\|_{L^2}^2\leq 48\sqrt{3}M^\frac{27}{4}$; otherwise, we arrange the right hand side 
\begin{align*}
\|\partial_X^5U(\cdot,s)\|^2_{L^2}&\leq \|\partial_X^5U(\cdot,-\log\epsilon)\|_{L^2}^2e^{-\frac{5}{4}(s+\log\epsilon)}+48\sqrt{3}M^\frac{27}{4}(1-e^{-\frac{5}{4}(s+\log\epsilon)})\\
&\leq  \|\partial_X^5U(\cdot,-\log\epsilon)\|_{L^2}^2e^{-\frac{5}{4}(s+\log\epsilon)}+\|\partial_X^5U(\cdot,-\log\epsilon)\|_{L^2}^2(1-e^{-\frac{5}{4}(s+\log\epsilon)})\\
&\leq \|\partial_X^5U(\cdot,-\log\epsilon)\|^2_{L^2}\leq \frac{1}{4}M^8.
\end{align*}
In the former case, we need
\begin{align*}
48\sqrt{3}M^\frac{27}{4}\leq \frac{1}{4}M^8\quad\implies\quad M^\frac{5}{4}\geq 192\sqrt{3}.
\end{align*}
In either case, we can prove that 
\begin{align*}
\|\partial^5_XU(\cdot,s)\|_{L^2}\leq \frac{1}{2}M^4.
\end{align*}
Since we improve the bound on $\|\partial_X^5U\|_{L^2}$, we close the bootstrap \eqref{eq:5xUL2}.
\end{proof}

\section{Bounds on forcing terms}\label{sec:forcingbound}
We record bounds for all the forcing terms we will later use. Many of these contain Hilbert transforms of $U$ or its derivatives, so we bound them first.
\begin{lemma}[$\|H[U+e^\frac{s}{2}\kappa\rbrack(\cdot,s)\|_{L^\infty}$]
We have
\begin{align*}
\|H[U+e^\frac{s}{2}\kappa](\cdot,s)\|_{L^\infty}&\lesssim Me^{\frac{3}{4}s}.
\end{align*}\label{lem:H[U]}
\end{lemma}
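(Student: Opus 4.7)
The plan is to obtain the $L^\infty$-bound on $H[U+e^{s/2}\kappa]$ by splitting the principal value integral into three regions according to $|Y - X|$, interpolating between the three quantities already controlled at this point of the argument: $\|\partial_X(U+e^{s/2}\kappa)\|_{L^\infty} = \|\partial_X U\|_{L^\infty} = 1$ from \eqref{cor:1xULinfty}, $\|U+e^{s/2}\kappa\|_{L^\infty} \leq Me^{s/2}$ from \eqref{eq:ULinfty}, and $\|U+e^{s/2}\kappa\|_{L^2} \leq \tfrac{\sqrt{2}}{2}Me^{5s/4}$ from Proposition~\ref{prop:UL2}.

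Setting $f := U + e^{s/2}\kappa$ and exploiting that $\mathrm{p.v.}\int_{|Y-X|<R}(X-Y)^{-1}\,dY = 0$ to absorb $f(X)$ into the principal value, for any $0 < R < R'$ I would bound
\[
|H[f](X)| \;\leq\; \frac{2R}{\pi}\|\partial_X f\|_{L^\infty} \;+\; \frac{2}{\pi}\|f\|_{L^\infty}\log\!\Big(\frac{R'}{R}\Big) \;+\; \frac{1}{\pi}\|f\|_{L^2}\sqrt{2/R'},
\]
where the first piece uses Lipschitz control, the second just integrates $|X-Y|^{-1}$ on an annulus, and the third uses Cauchy--Schwarz against $1/(X-Y) \in L^2(\{|Y-X|>R'\})$. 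Substituting the three ingredients, the right-hand side is controlled by $\tfrac{2R}{\pi} + \tfrac{2Me^{s/2}}{\pi}\log(R'/R) + \tfrac{Me^{5s/4}}{\pi\sqrt{R'}}$.

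Optimizing with $R = e^{s/2}$ and $R' = e^s$ (valid since $s>0$) produces contributions of order $e^{s/2}$, $Mse^{s/2}$, and $Me^{3s/4}$ respectively. The first is trivially $\lesssim Me^{3s/4}$ since $M \geq 1$; for the middle, the elementary bound $s \leq e^{s/4}$, which holds whenever $s \geq s_0 = -\log\epsilon$ with $\epsilon = \epsilon(M)$ sufficiently small, gives $Ms e^{s/2} \lesssim Me^{3s/4}$, yielding the claim.

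The main delicate point is the logarithmic loss from the middle region. The naive single-scale interpolation $\|H[f]\|_{L^\infty} \lesssim \|f\|_{L^2}^{2/3}\|\partial_X f\|_{L^\infty}^{1/3}$, which uses only two of the three bounds, would deliver $M^{2/3} e^{5s/6}$, which exceeds $Me^{3s/4}$ for large $s$ and is thus insufficient. The three-scale split that also exploits the $L^\infty$-bound $Me^{s/2}$ is therefore essential, and the exponential gap $e^{s/4}$ between the factors $e^{s/2}$ and $e^{3s/4}$ is precisely what absorbs the $\log(R'/R)$ factor from the middle annulus.
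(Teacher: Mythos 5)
Your proposal is correct and follows essentially the same route as the paper: a three-region split of the principal value integral, using the Lipschitz bound $\|\partial_XU\|_{L^\infty}=1$ near $X$, the $L^\infty$-bound $Me^{s/2}$ with a logarithmic factor on the intermediate annulus, and Cauchy--Schwarz against the $L^2$-bound $\tfrac{\sqrt{2}}{2}Me^{5s/4}$ far away, with the log absorbed by a spare power $e^{s/4}$ since $s\geq-\log\epsilon$ is large. The only difference is the choice of cut radii ($e^{s/2}$ and $e^{s}$ versus the paper's $1$ and $\tfrac12 e^{3s/2}$), which does not change the mechanism or the final bound.
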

\begin{proof} 
We split the principal value integral into three parts
\begin{align*}
H[U+e^\frac{s}{2}\kappa](X,s)&=\frac{1}{\pi}\mathrm{p.v.}\int_\mathbb{R}\frac{U(Y,s)+e^\frac{s}{2}\kappa}{X-Y}\,dY\\
&\sim\lim_{\delta\to 0^+}\int_{\delta\leq |X-Y|\leq 1}\frac{U(Y,s)-U(X,s)}{X-Y}\,dY+\int_{1\leq |X-Y|\leq \frac{1}{2}e^{\frac{3}{2}s}}\frac{U(Y,s)+e^\frac{s}{2}\kappa}{X-Y}\,dY\\
&\qquad\qquad+\int_{|X-Y|\geq \frac{1}{2}e^{\frac{3}{2}s}}\frac{U(Y,s)+e^\frac{s}{2}\kappa}{X-Y}\,dY,
\end{align*}
where we used the fact that the kernel is odd. Hence, by \eqref{eq:ULinfty} and Proposition \ref{prop:UL2}, \begin{align*}
    |H[U+e^\frac{s}{2}\kappa](X,s)|&\lesssim \int_{|X-Y|\leq 1}\|\partial_XU(\cdot,s)\|_{L^\infty}\,dY+\int_{1<|X-Y|<\frac{1}{2}e^{\frac{3}{2}s}}\frac{\|U(\cdot,s)+e^\frac{s}{2}\kappa\|_{L^\infty}}{|X-Y|}\,dY\\
    &\qquad\qquad+\|U(\cdot,s)+e^\frac{s}{2}\kappa\|_{L^2}\Big(\int_{|X-Y|\geq \frac{1}{2}e^{\frac{3}{2}s}}\frac{1}{|X-Y|^2}\,dY\Big)^\frac{1}{2}\\
    &\lesssim 1+Me^\frac{s}{2}(\frac{3}{2}s-\log 2)+e^{\frac{5}{4}s}Me^{-\frac{3}{4}s}\\
    &\lesssim 1+ Me^{\frac{3}{4}s}+Me^{\frac{1}{2}s}\\
    &\lesssim Me^{\frac{3}{4}s},
\end{align*}
where in the third $\lesssim$ we use $se^\frac{s}{2}\leq e^{\frac{3}{4}s}$ by choosing $\epsilon$ small enough. Since this holds for all $X$, we complete the proof.
\end{proof}

\begin{lemma}[$\|H[\partial^j_XU\rbrack(\cdot,s)\|_{L^\infty}$ for $j=1,...,4$]
We have 
\begin{align*}
\|H[\partial_X^jU](\cdot,s)\|_{L^\infty}\lesssim M^{j-\frac{1}{2}},\quad j=1,2,3,4.
\end{align*}\label{lem:H[partialjU]}
\end{lemma}

\begin{proof}
We use Lemma \ref{lem:GNSinterpolation} to interpolate between $\partial_XU$ and $\partial_X^5U$, and also the fact that Hilbert transform preserves the $L^2$-norm:
\begin{align*}
\|H[\partial_X^jU](\cdot,s)\|_{L^\infty}&\lesssim \|H[\partial_X^5U](\cdot,s)\|_{L^2}^\frac{2j-1}{8}\|H[\partial_XU](\cdot,s)\|_{L^2}^\frac{9-2j}{8}\\
&\leq\|\partial_X^5U(\cdot,s)\|_{L^2}^\frac{2j-1}{8}\|\partial_XU(\cdot,s)\|_{L^2}^\frac{9-2j}{8}\lesssim  M^\frac{2j-1}{2}.
\end{align*}
\end{proof}

\begin{lemma}[$H[\partial_XU\rbrack$]
For $(X,s)$ such that $l\leq |X|\leq \frac{1}{2}e^{\frac{3}{2}s}$, 
\begin{align}
|H[\partial_XU(X,s)|\lesssim M^\frac{1}{4}(1+X^2)^{-\frac{1}{3}}+(1+X^2)^{-\frac{1}{3}}\log(1+X^2)+(1+X^2)^{-\frac{1}{4}}.\label{eq:H1xUmiddle}
\end{align}
And for $(X,s)$ such that $|X|\geq \frac{1}{2}e^{\frac{3}{2}s}$,
\begin{align}
|H[\partial_XU(X,s)|\lesssim e^{-\frac{1}{4}s}.\label{eq:H1xUfar}
\end{align}\label{lem:H1x}
\end{lemma}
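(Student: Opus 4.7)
My plan is to split the principal-value integral defining $H[\partial_XU](X,s)$ into three regions based on $|Y-X|$, using kernel cancellation near $X$, pointwise decay in an intermediate annulus, and Cauchy--Schwarz with the uniform $L^2$ bound on $\partial_XU$ far from $X$. Each of the three regions will produce one of the three terms in the target inequality.

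For the middle range $l\leq |X|\leq \tfrac{1}{2}e^{3s/2}$, I set $r_1:=(1+X^2)^{-1/3}$ and $r_2:=|X|/2$ and write $H[\partial_XU](X,s)=I_1+I_2+I_3$, where $I_1$ is the p.v.\ integral on $|Y-X|\leq r_1$, $I_2$ the integral on $r_1<|Y-X|\leq r_2$, and $I_3$ the integral on $|Y-X|>r_2$. On $I_1$ I symmetrise using the oddness of the kernel, replace $\partial_XU(Y)$ by $\partial_XU(Y)-\partial_XU(X)$, and use the mean value theorem with $\|\partial_X^2U\|_{L^\infty}\leq M^{1/4}$ from \eqref{eq:3xULinfty} to obtain $|I_1|\leq 2M^{1/4}r_1= 2M^{1/4}(1+X^2)^{-1/3}$. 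On $I_2$ we have $|Y|\sim |X|$, so combining \eqref{eq:1xUmiddle} with \eqref{cor:1xULinfty} yields $|\partial_XU(Y)|\lesssim (1+X^2)^{-1/3}$, giving $|I_2|\lesssim (1+X^2)^{-1/3}\log(r_2/r_1)\lesssim (1+X^2)^{-1/3}\log(1+X^2)$. On $I_3$ I apply Cauchy--Schwarz with Proposition \ref{prop:1L2}: $|I_3|\leq \|\partial_XU\|_{L^2}\bigl(\int_{|Y-X|>|X|/2}(X-Y)^{-2}dY\bigr)^{1/2}\lesssim |X|^{-1/2}\lesssim (1+X^2)^{-1/4}$ for $|X|\geq 1$, while for $l\leq |X|\leq 1$ the resulting $O(l^{-1/2})=O(\log M)$ is absorbed into the $M^{1/4}$ term.

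For the far range $|X|\geq \tfrac{1}{2}e^{3s/2}$, I use the analogous split with $r_1=1$ and $r_2=e^{3s/2}/4$. On $|Y-X|\leq 1$ both $X$ and $Y$ lie in the far regime, so \eqref{eq:2xUfar} gives $\|\partial_X^2U\|_{L^\infty}\leq 4M^{1/4}e^{-s}$, hence $|I_1|\lesssim M^{1/4}e^{-s}$. On $1<|Y-X|\leq e^{3s/2}/4$ we have $|Y|\geq |X|-e^{3s/2}/4\geq e^{3s/2}/4$, and combining \eqref{eq:1xUmiddle}, \eqref{eq:1xUfar} gives $|\partial_XU(Y)|\lesssim e^{-s}$, so $|I_2|\lesssim e^{-s}\log(e^{3s/2}/4)\lesssim se^{-s}$. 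On $|Y-X|>e^{3s/2}/4$, Cauchy--Schwarz gives $|I_3|\lesssim \|\partial_XU\|_{L^2}\,e^{-3s/4}\lesssim e^{-3s/4}$. Since $s\geq -\log\epsilon$ is large, choosing $\epsilon$ small relative to $M$ makes $M^{1/4}e^{-s}+se^{-s}\lesssim e^{-s/4}$, yielding $|H[\partial_XU](X,s)|\lesssim e^{-s/4}$.

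\textbf{Main obstacle.} The delicate point is the far-from-$X$ tail: I only have the $L^2$ bound $\|\partial_XU\|_{L^2}\leq 10$ and no a priori pointwise decay of $\partial_XU$ at spatial infinity, so in the middle-range estimate Cauchy--Schwarz yields the weaker exponent $(1+X^2)^{-1/4}$ rather than the natural $(1+X^2)^{-1/3}$ (precisely the phenomenon flagged in the remark after Step 3). Correspondingly, in the far-$X$ case one must avoid a destructive $\log|X|$ arising from naive pointwise estimation of the whole intermediate region; truncating at $|Y-X|=e^{3s/2}/4$ and passing to Cauchy--Schwarz beyond, rather than at $|Y-X|=|X|/2$, keeps the log factor bounded by $s$ and is the key to closing the bound uniformly in $|X|$.
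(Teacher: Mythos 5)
Your proposal is correct and follows essentially the same strategy as the paper's proof: a three-region split of the principal value integral, with the mean value theorem and the second-derivative bound near $X$, pointwise decay of $\partial_XU$ on an intermediate annulus producing the logarithm, and Cauchy--Schwarz against $\|\partial_XU\|_{L^2}\leq 10$ on the far tail producing $(1+X^2)^{-1/4}$ (resp.\ $e^{-3s/4}$), exactly as in the paper. The only points to tighten are boundary cases the paper treats explicitly: when $|X|$ is close to the threshold $\tfrac12 e^{3s/2}$, points $Y$ in your intermediate annulus (middle range) or near region (far range) may lie in the adjacent regime, so you must also invoke \eqref{eq:1xUfar} (resp.\ \eqref{eq:2xUmiddle}) to keep the bounds $\lesssim(1+X^2)^{-1/3}$ and $\lesssim M^{1/4}e^{-s}$ there, and for $l\leq|X|\lesssim 1$ your radii satisfy $|X|/2<(1+X^2)^{-1/3}$, so the stated split is not a partition and one should instead apply Cauchy--Schwarz directly on $|Y-X|>(1+X^2)^{-1/3}$, which is precisely the paper's Case 3.
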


\begin{proof}
For $(X,s)$ such that $l\leq |X|\leq \frac{1}{2}e^{\frac{3}{2}s}$, we split the principal integral as follows
\begin{align*}
H[\partial_XU](X,s)&=\frac{1}{\pi}\Big(\int_{|X-Y|<(1+X^2)^{-\frac{1}{3}}}\frac{\partial_XU(Y,s)-\partial_XU(X,s)}{X-Y}\,dY\\
&\qquad+\int_{(1+X^2)^{-\frac{1}{3}}\leq |X-Y|\leq 1}\frac{\partial_XU(Y,s)}{X-Y}\,dY+\int_{|X-Y|>1} \frac{\partial_XU(Y,s)}{X-Y}\,dY\Big)\\
&=I_{\text{near}}+I_{\text{middle}}+I_{\text{far}}.
\end{align*}
By Mean Value Theorem and \eqref{eq:2xUmiddle}, we have
\begin{align*}
|I_{\text{near}}|\lesssim (1+X^2)^{-\frac{1}{3}}\|\partial_X^2U(\cdot,s)\|_{L^\infty}\lesssim M^\frac{1}{4}(1+X^2)^{-\frac{1}{3}}.
\end{align*}
Next, for $I_{\text{middle}}$, if $|X|\leq \frac{1}{2}e^{\frac{3}{2}s}-1$, then for $|X-Y|<1$, $|Y|\leq \frac{1}{2}e^{\frac{3}{2}s}$, so by \eqref{eq:1xUmiddle}
\begin{align*}
|\partial_XU(Y,s)|\leq (1+Y^2)^{-\frac{1}{3}}\leq 2(1+X^2)^{-\frac{1}{3}}.
\end{align*}
If $\frac{1}{2}e^{\frac{3}{2}s}-1<|X|\leq \frac{1}{2}e^{\frac{3}{2}s}$, then for $|Y|\leq \frac{1}{2}e^{\frac{3}{2}s}$ such that $|X-Y|<1$, same as above we have 
$|\partial_XU(Y,s)|\leq 2(1+X^2)^{-\frac{1}{3}}$, and for $|Y|\geq \frac{1}{2}e^{\frac{3}{2}s}$ such that $|X-Y|<1$, by \eqref{eq:1xUfar},
\begin{align*}
|\partial_XU(Y,s)|\leq 2e^{-s}\leq 2^\frac{1}{3}|X|^{-\frac{2}{3}}\leq 2(1+X^2)^{-\frac{1}{3}}.
\end{align*}
Hence,
\begin{align*}
|I_{\text{middle}}|\lesssim \int_{(1+X^2)^{-\frac{1}{3}}\leq |X-Y|\leq 1}\frac{(1+X^2)^{-\frac{1}{3}}}{|X-Y|}\,dY&\lesssim (1+X^2)^{-\frac{1}{3}}\log|X-Y|\Big|_{(1+X^2)^{-\frac{1}{3}}}^1\\
&\lesssim (1+X^2)^{-\frac{1}{3}}\log(1+X^2).
\end{align*}
Then, for $I_{\text{far}}$, without loss of generality we consider 3 cases: $2\leq X\leq \frac{1}{3}e^{\frac{3}{2}s}$, $\frac{1}{3}e^{\frac{3}{2}s}\leq X\leq \frac{1}{2}e^{\frac{3}{2}s}$ and $l\leq X <2$. The cases when $X<-l$ are similar.\\
Case 1: $2\leq X\leq \frac{1}{3}e^{\frac{3}{2}s}$, i.e. $\frac{1}{2}X\geq 1$, $\frac{3}{2}X\leq \frac{1}{2}e^{\frac{3}{2}s}$. We further split the integral as follows
\begin{align*}
I_{\text{far}}&=\frac{1}{\pi}\Big(\int_{|X-Y|>\frac{1}{2}X}+\int_{1<|X-Y|\leq \frac{1}{2}X}\frac{\partial_XU(Y,s)}{X-Y}\,dY\Big)\\
|I_{\text{far}}|&\lesssim \Big(\int_{|X-Y|>\frac{1}{2}|X|}\frac{1}{|X-Y|^2}\,dY\Big)^\frac{1}{2}\|\partial_XU(\cdot,s)\|_{L^2}+\int_{\frac{1}{2}X}^{X-1}+\int_{X+1}^{\frac{3}{2}X}\frac{(1+Y^2)^{-\frac{1}{3}}}{|X-Y|}\,dY\\
&\lesssim |X|^{-\frac{1}{2}}+(1+\frac{1}{4}X^2)^{-\frac{1}{3}}\int_{1<|X-Y|<\frac{1}{2}X}\frac{1}{|X-Y|}\,dY\qquad\text{by }\eqref{eq:1xUL2}\\
&\lesssim (1+X^2)^{-\frac{1}{4}}+(1+X^2)^{-\frac{1}{3}}\log{X}\\
&\lesssim (1+X^2)^{-\frac{1}{4}}+(1+X^2)^{-\frac{1}{3}}\log(1+X^2).
\end{align*}
Case 2: $\frac{1}{3}e^{\frac{3}{2}s}\leq X\leq \frac{1}{2}e^{\frac{3}{2}s}$, i.e.  $\frac{3}{2}X\geq \frac{1}{2}e^{\frac{3}{2}s}$. We split the integral in a different way
\begin{align*}
I_{\text{far}}&=\frac{1}{\pi}\Big(\int_{|X-Y|\geq\frac{1}{2}X}+\int_{\frac{1}{2}X}^{X-1}+\int_{\min(X+1,\frac{1}{2}e^{\frac{3}{2}s})}^{\frac{1}{2}e^{\frac{3}{2}s}}+\int_{\max(\frac{1}{2}e^{\frac{3}{2}s},X+1)}^{\frac{3}{2}X}\frac{\partial_XU(Y,s)}{X-Y}\,dY\Big)\\
&=I_{\text{far}}^1+I_{\text{far}}^2+I_{\text{far}}^3+I_{\text{far}}^4.
\end{align*}
We bound the four terms in different ways
\begin{align*}
|I_{\text{far}}^1|&\lesssim\Big(\int_{|X-Y|\geq \frac{1}{2}X}\frac{1}{|X-Y|^2}\Big)^\frac{1}{2}\|\partial_XU(\cdot,s)\|_{L^2}\lesssim |X|^{-\frac{1}{2}}\lesssim (1+X^2)^{-\frac{1}{4}},\\
|I_{\text{far}}^2|&\lesssim \int_{\frac{1}{2}X}^{X-1}\frac{(1+Y^2)^{-\frac{1}{3}}}{|X-Y|}\,dY\leq (1+\frac{1}{4}X^2)^{-\frac{1}{3}}\int_{\frac{1}{2}X}^{X-1}\frac{1}{|X-Y|}\,dY\\
&\lesssim (1+X^2)^{-\frac{1}{3}}\log{X}\lesssim (1+X^2)^{-\frac{1}{3}}\log(1+X^2),\\
|I_{\text{far}}^3+I_{\text{far}}^4|&\lesssim \int_{X+1}^{\frac{3}{2}X} \frac{(1+Y^2)^{-\frac{1}{3}}}{|X-Y|}\,dY+\int_{\max(\frac{1}{2}e^{\frac{3}{2}s},X+1)}^{\frac{3}{2}X}\frac{2e^{-s}}{|X-Y|}\,dY\qquad \text{by \eqref{eq:1xUmiddle}, \eqref{eq:1xUfar}}\\
&\lesssim (1+X^2)^{-\frac{1}{3}}\int_{X+1}^{\frac{3}{2}X}\frac{1}{|X-Y|}\,dY+e^{-s}\int_{X+1}^{\frac{3}{2}X}\frac{1}{|X-Y|}\,dY\\
&\lesssim \big((1+X^2)^{-\frac{1}{3}}+e^{-s}\big)\log{X}\\
&\lesssim (1+X^2)^{-\frac{1}{3}}\log(1+X^2).
\end{align*}
Case 3: $l<X<2$, i.e. $(1+X^2)^{-\frac{1}{4}}\geq 5^{-\frac{1}{4}}$. 
\begin{align*}
|I_{\text{far}}|&\lesssim \|\partial_XU(\cdot,s)\|_{L^2}\Big(\int_{|X-Y|>1}\frac{1}{|X-Y|^2}\,dY\Big)^\frac{1}{2}\lesssim 3\sqrt{2}\lesssim (1+X^2)^{-\frac{1}{4}}.
\end{align*}
In all three cases, we have
\begin{align*}
|I_{\text{far}}|\lesssim (1+X^2)^{-\frac{1}{4}}+(1+X^2)^{-\frac{1}{3}}\log(1+X^2).
\end{align*}
Therefore, putting all terms together, we get
\begin{align*}
|H[\partial_XU](X,s)|&\lesssim M^\frac{1}{4}(1+X^2)^{-\frac{1}{3}}+(1+X^2)^{-\frac{1}{4}}+(1+X^2)^{-\frac{1}{3}}\log(1+X^2).
\end{align*}
This gives \eqref{eq:H1xUmiddle}.

For $(X,s)$ such that $|X|\geq \frac{1}{2}e^{\frac{3}{2}s}$, we split the principal integral as follows
\begin{align*}
H[\partial_XU](X,s)&=\frac{1}{\pi}\Big(\int_{|X-Y|<e^{-s}}\frac{\partial_XU(Y,s)-\partial_XU(X,s)}{X-Y}\,dY\\
&\qquad+\int_{e^{-s}\leq |X-Y|\leq 1}\frac{\partial_XU(Y,s)}{X-Y}\,dY+\int_{|X-Y|>1}\frac{\partial_XU(Y,s)}{X-Y}\,dY\Big)\\
&=\mathit{II}_{\text{near}}+\mathit{II}_{\text{middle}}+\mathit{II}_{\text{far}}.
\end{align*}
By Mean Value Theorem and \eqref{eq:2xUfar}, we have
\begin{align*}
|\mathit{II}_{\text{near}}|\leq \frac{2}{\pi}e^{-s}\|\partial_X^2U(\cdot,s)\|_{L^\infty}\lesssim M^\frac{1}{4}e^{-s}.
\end{align*}
Next, for $\mathit{II}_{\text{middle}}$, if $|X|\geq \frac{1}{2}e^{\frac{3}{2}s}+1$ and $|X-Y|\leq 1$, then $|Y|\geq \frac{1}{2}e^{\frac{3}{2}s}$, so $|\partial_XU(Y,s)|\leq 2e^{-s}$; this bound also holds for those $|Y|\geq \frac{1}{2}e^{\frac{3}{2}s}$ such that $|X-Y|<1$ when $\frac{1}{2}e^{\frac{3}{2}s}\leq |X|<\frac{1}{2}e^{\frac{3}{2}s}+1$. And for those $|X-Y|<1$ such that $|Y|\leq \frac{1}{2}e^{\frac{3}{2}s}$ when $\frac{1}{2}e^{\frac{3}{2}s}\leq |X|<\frac{1}{2}e^{\frac{3}{2}s}+1$,
by \eqref{eq:1xUmiddle} and \eqref{eq:1xbarUfar}, 
\begin{align*}
|\partial_XU(Y,s)|&\leq (\frac{7}{20}+\epsilon^\frac{1}{12})(1+Y^2)^{-\frac{1}{3}}\\
&\leq \frac{2}{5}\big(1+(X-1)^2\big)^{-\frac{1}{3}}\leq\big(\frac{1}{2}e^{\frac{3}{2}s}-1\big)^{-\frac{2}{3}}\leq 2e^{-s},
\end{align*}
(here without loss of generality we consider the positive $Y$ part; the negative $Y$ part has the same result) since $\frac{2}{5}\cdot 2^\frac{2}{3}\approx 0.635$ and $s\geq -\log\epsilon$ is very large.
Hence,
\begin{align*}
|\mathit{II}_{\text{middle}}|\leq \frac{1}{\pi}\int_{e^{-s}\leq |X-Y|\leq 1}\frac{2e^{-s}}{|X-Y|}\,dY\leq \frac{4}{\pi}e^{-s}\log|X-Y|\Big|_{e^{-s}}^1
\lesssim se^{-s}.
\end{align*}
Then, for $\mathit{II}_{\text{far}}$, without loss of generality consider two cases: $\frac{1}{2}e^{\frac{3}{2}s}\leq X<\frac{3}{4}e^{\frac{3}{2}s}$ and $X\geq\frac{3}{4}e^{\frac{3}{2}s}$. The two cases when $X\leq -\frac{1}{2}e^{\frac{3}{2}s}$ are similar. \\
Case 1: $\frac{1}{2}e^{\frac{3}{2}s}\leq X<\frac{3}{4}e^{\frac{3}{2}s}$. We further split the integral as follows
\begin{align*}
\mathit{II}_{\text{far}}&=\frac{1}{\pi}\Big(\int_{|X-Y|\geq \frac{1}{4}e^{\frac{3}{2}s}}+\int_{X-\frac{1}{4}e^{\frac{3}{2}s}}^{\min(\frac{1}{2}e^{\frac{3}{2}s},X-1)}+\int_{\min(\frac{1}{2}e^{\frac{3}{2}s},X-1)}^{X-1}+\int_{X+1}^{X+\frac{1}{4}e^{\frac{3}{2}s}}\frac{\partial_XU(Y,s)}{X-Y}\,dY\Big)\\
&=\mathit{II}_{\text{far}}^1+\mathit{II}_{\text{far}}^2+\mathit{II}_{\text{far}}^3+\mathit{II}_{\text{far}}^4.
\end{align*}
We bound the four terms in different ways
\begin{align*}
|\mathit{II}_{\text{far}}^1|&\lesssim \Big(\int_{|X-Y|\geq \frac{1}{4}e^{\frac{3}{2}s}}\frac{1}{|X-Y|^2}\,dY\Big)^\frac{1}{2}\|\partial_XU(\cdot,s)\|_{L^2}\lesssim e^{-\frac{3}{4}s}\qquad\text{by \eqref{eq:1xUL2}},\\
|\mathit{II}_{\text{far}}^2|&\lesssim \int_{X-\frac{1}{4}e^{\frac{3}{2}s}}^{\min(\frac{1}{2}e^{\frac{3}{2}s},X-1)}\frac{|Y|^{-\frac{2}{3}}}{|X-Y|}\,dY\qquad\text{by \eqref{eq:1xUmiddle}}\\
&\lesssim \Big(\int_{\frac{1}{4}e^{\frac{3}{2}s}}^{\frac{1}{2}e^{\frac{3}{2}s}}|Y|^{-\frac{4}{3}}\,dY\Big)^\frac{1}{2}\Big(\int_{X-\frac{1}{4}e^{\frac{3}{2}s}}^{X-1}\frac{1}{|X-Y|^2}\,dY\Big)^\frac{1}{2}\qquad\text{since }X-\frac{1}{4}e^{\frac{3}{2}s}\geq\frac{1}{4}e^{\frac{3}{2}s}\\
&\lesssim \Big((\frac{1}{4}e^{-\frac{3}{2}s})^{-\frac{1}{3}}-(\frac{1}{2}e^{-\frac{3}{2}s})^{-\frac{1}{3}}\Big)^\frac{1}{2}\lesssim e^{-\frac{1}{4}s},\\
|\mathit{II}_{\text{far}}^3+\mathit{II}_{\text{far}}^4|&\lesssim \int_{1<|X-Y|<\frac{1}{4}e^{\frac{3}{2}s}}\frac{2e^{-s}}{|X-Y|}\,dY\lesssim se^{-s}\qquad\text{by \eqref{eq:1xUfar}}\text{, and }X-\frac{1}{4}e^{\frac{3}{2}s}\leq \frac{1}{2}e^{\frac{3}{2}s}.
\end{align*}
Hence, $|\mathit{II}_\text{far}|\lesssim e^{-\frac{3}{4}s}+e^{-\frac{1}{4}s}+se^{-s}\lesssim e^{-\frac{1}{4}s}+se^s$.\\
Case 2: $X\geq \frac{3}{4}e^{\frac{3}{2}s}$. We split the integral in a simpler way
\begin{align*}
\mathit{II}_{\text{far}}&=\frac{1}{\pi}\Big(\int_{|X-Y|\geq\frac{1}{4}e^{\frac{3}{2}s}}+\int_{1<|X-Y|<\frac{1}{4}e^{\frac{3}{2}s}}\frac{\partial_XU(Y,s)}{X-Y}\,dY\Big)\\
|\mathit{II}_{\text{far}}|&\lesssim \Big(\int_{|X-Y|\geq \frac{1}{4}e^{\frac{3}{2}s}}\frac{1}{|X-Y|^2}\,dY\Big)^\frac{1}{2}\|\partial_XU(\cdot,s)\|_{L^2}+\int_{1<|X-Y|<\frac{1}{4}e^{\frac{3}{2}s}}\frac{2e^{-s}}{|X-Y|}\,dY\\
&\lesssim e^{-\frac{3}{4}s}+se^{-s}\lesssim e^{-\frac{1}{4}s}+se^{-s}.
\end{align*}
Therefore, putting all terms together, we get 
\begin{align*}
|H[\partial_XU](X,s)|\lesssim M^\frac{1}{4}e^{-s}+se^{-s}+e^{-\frac{1}{4}s}\lesssim e^{-\frac{1}{4}s},
\end{align*}
by choosing $\epsilon$ sufficiently small so that $s\geq-\log\epsilon$ is sufficiently large. This proves \eqref{eq:H1xUfar}.
\end{proof}

\begin{lemma}[$H[\partial_X^2U\rbrack$]
For $(X,s)$ such that $l\leq |X|\leq \frac{1}{2}e^{\frac{3}{2}s}$, 
\begin{align}
|H[\partial_X^2U(X,s)|\lesssim M^\frac{3}{4}(1+X^2)^{-\frac{1}{3}}+M^\frac{1}{4}(1+X^2)^{-\frac{1}{3}}\log(1+X^2)+           M(1+X^2)^{-\frac{1}{4}}.\label{eq:H2xUmiddle}
\end{align}
And for $(X,s)$ such that $|X|\geq \frac{1}{2}e^{\frac{3}{2}s}$,
\begin{align}
|H[\partial_X^2U(X,s)|\lesssim M^\frac{1}{4}e^{-\frac{1}{4}s}.\label{eq:H2xUfar}
\end{align}\label{lem:H2x}
\end{lemma}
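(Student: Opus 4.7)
The proof will mirror the structure of Lemma \ref{lem:H1x} almost step for step, simply replacing $\partial_X U$ by $\partial_X^2 U$ and updating the relevant $L^\infty$ and $L^2$ bounds. The plan is to split the principal value integral defining $H[\partial_X^2 U](X,s)$ into three pieces: a \emph{near} region $|X-Y|<r(X,s)$ where we symmetrize and use the mean value theorem against $\partial_X^3 U$, a \emph{middle} region $r(X,s)\leq|X-Y|\leq 1$ where we use pointwise decay of $\partial_X^2 U$, and a \emph{far} region $|X-Y|>1$ where we either continue to use pointwise decay or fall back to Cauchy--Schwarz against $\|\partial_X^2 U(\cdot,s)\|_{L^2}$. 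The choice of the inner cutoff $r(X,s)$ is dictated by the regime: $r=(1+X^2)^{-1/3}$ for $l\leq|X|\leq\tfrac12 e^{3s/2}$, and $r=e^{-s}$ for $|X|\geq\tfrac12 e^{3s/2}$.

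For the middle range, the near piece produces $\lesssim r\,\|\partial_X^3 U\|_{L^\infty}\lesssim M^{3/4}(1+X^2)^{-1/3}$ by \eqref{eq:3xULinfty}, which is the first term on the right of \eqref{eq:H2xUmiddle}. The middle piece combines \eqref{eq:2xUmiddle} (and the boundary continuation via \eqref{eq:2xUfar} as in Lemma \ref{lem:H1x}) with $\int_{r}^{1}|X-Y|^{-1}dY\lesssim \log(1+X^2)$, giving $M^{1/4}(1+X^2)^{-1/3}\log(1+X^2)$. The far piece is handled by repeating the three-case split ($2\leq X\leq\tfrac13 e^{3s/2}$, $\tfrac13 e^{3s/2}\leq X\leq\tfrac12 e^{3s/2}$, $l<X<2$), where each sub-integral is estimated either by the pointwise bound on $\partial_X^2 U$ combined with a logarithmic integral or by Cauchy--Schwarz together with $\|\partial_X^2 U(\cdot,s)\|_{L^2}\lesssim M$ from \eqref{eq:2xUL2}. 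The Cauchy--Schwarz pieces contribute $M(1+X^2)^{-1/4}$ (the $M$ replacing the constant from $\|\partial_X U\|_{L^2}\lesssim 1$), yielding exactly the third term in \eqref{eq:H2xUmiddle}.

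For the far range $|X|\geq\tfrac12 e^{3s/2}$, the near piece gives $\lesssim e^{-s}\|\partial_X^3 U\|_{L^\infty}\lesssim M^{3/4}e^{-s}$; the middle piece, after checking as in Lemma \ref{lem:H1x} that $|\partial_X^2 U(Y,s)|\lesssim M^{1/4}e^{-s}$ for $|X-Y|\leq 1$ via \eqref{eq:2xUmiddle}--\eqref{eq:2xUfar}, gives $\lesssim M^{1/4}se^{-s}$; and the far piece, using Cauchy--Schwarz with $\|\partial_X^2 U(\cdot,s)\|_{L^2}\lesssim M$ on $|X-Y|\geq \tfrac14 e^{3s/2}$, plus the pointwise bounds on the annulus $1<|X-Y|<\tfrac14 e^{3s/2}$ (two cases, depending on whether $X\geq \tfrac34 e^{3s/2}$ or not), gives $\lesssim Me^{-3s/4}+M^{1/4}e^{-s/4}+M^{1/4}se^{-s}$. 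Since $s\geq-\log\epsilon$ is as large as needed and $\epsilon=\epsilon(M)$ is chosen small relative to $M$, the $Me^{-3s/4}$ term is absorbed into $M^{1/4}e^{-s/4}$ (using $M^{3/4}\leq e^{s/2}$), and $M^{1/4}se^{-s}\lesssim M^{1/4}e^{-s/4}$, which yields \eqref{eq:H2xUfar}.

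The main technical obstacle, as with Lemma \ref{lem:H1x}, is the bookkeeping at the interface $|X|\sim\tfrac12 e^{3s/2}$: one must verify that the pointwise bounds on $\partial_X^2 U$ from either side of the interface agree up to constants on the relevant annulus, so that the ``middle'' integrand admits a single bound. This was done in Lemma \ref{lem:H1x} by comparing $(1+Y^2)^{-1/3}$ with $e^{-s}$ at the boundary; here the same comparison works because both \eqref{eq:2xUmiddle} and \eqref{eq:2xUfar} carry the same factor $M^{1/4}$ and the geometric decay matches at $|Y|\approx\tfrac12 e^{3s/2}$. Apart from this matching, the proof is essentially a careful rerun of the case analysis in Lemma \ref{lem:H1x}.
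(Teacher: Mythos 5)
Your proposal is correct and follows essentially the same route as the paper, which itself proves Lemma \ref{lem:H2x} by rerunning the splittings of Lemma \ref{lem:H1x} with $\|\partial_X^3U\|_{L^\infty}\lesssim M^{3/4}$, the $M^{1/4}$-weighted pointwise bounds \eqref{eq:2xUmiddle}, \eqref{eq:2xUfar}, and $\|\partial_X^2U\|_{L^2}\lesssim M$ in place of the first-derivative inputs. Your explicit absorption of the $Me^{-3s/4}$ term into $M^{1/4}e^{-s/4}$ (using that $\epsilon$ is small relative to $M$) is exactly the step the paper leaves implicit in its stated intermediate bound.
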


\begin{proof}
The proof is very similar to the one for Lemma \ref{lem:H1x}, using exactly the same splittings. The difference is due to the powers of $M$ in the bootstrap assumptions.

In \eqref{eq:H2xUmiddle}, the first term is the ``near" part, and uses \eqref{eq:3xULinfty}. The second term comes from the ``middle" part and some ``far" part. For the ``middle" part, we use \eqref{eq:2xUmiddle}, \eqref{eq:2xUfar} together with the observation that for $Y$ within $\pm 1$ of the threshold $\frac{1}{2}e^{\frac{3}{2}s}$, $(1+Y^2)^{-\frac{1}{3}}\approx 2^\frac{2}{3}e^{-s}$. For the ``far" part, we use \eqref{eq:2xUmiddle} and $e^{-s}\lesssim (1+X^2)^{-\frac{1}{3}}$. The last term is the remaining of ``far" part, i.e.\ the ``very far" part where we can use H\"older's inequality and \eqref{eq:xUL2} with $j=2$.

The bound \eqref{eq:H2xUfar} has an intermediate step 
\begin{align*}
|H[\partial_X^2U](X,s)|\lesssim M^{\frac{3}{4}}e^{-s}+M^\frac{1}{4}se^{-s}+M^\frac{1}{4}e^{-\frac{1}{4}s},
\end{align*}
which is $\lesssim M^\frac{1}{4}e^{-\frac{1}{4}s}$ since $s$ is sufficiently large to absorb other factors.  The first term is the ``near" part, and uses \eqref{eq:3xULinfty}. The second term comes from the ``middle" part and some ``far" part. For the ``middle" part, again we use \eqref{eq:2xUmiddle}, \eqref{eq:2xUfar}, and the observation that for $Y$ within $\pm 1$ of the threshold $\frac{1}{2}e^{\frac{3}{2}s}$, $(1+Y^2)^{-\frac{1}{3}}\approx 2^\frac{2}{3}e^{-s}$. For the ``far" part, we use \eqref{eq:2xUfar}. The last term comes from the remaining of ``far" part, i.e. the ``very far" part, where we can use H\"older's inequality and \eqref{eq:xUL2} with $j=2$.
\end{proof}

Next, we prove bounds on various forcing terms. 
 
\begin{lemma}[$F_{\partial^4_X\tilde{U}}$ for $|X|\leq l$]
For $|X|\leq l$,
\begin{align*}
|F_{\partial^4_X\tilde{U}}|&\lesssim \epsilon^\frac{1}{8}+(\log M)^{-1}\epsilon^\frac{1}{10}.
\end{align*}\label{lem:4tildeU}
\end{lemma}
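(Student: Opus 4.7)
The plan is to bound each of the eight explicit terms in $F_{\partial_X^4\widetilde{U}}$ separately, using the near-zero pointwise estimates on $\overline{U}^{(k)}$ from \eqref{eq:barUnear0}, the bootstrap bounds \eqref{eq:tildeUnear0}--\eqref{eq:3xtildeUnear0} on $\widetilde{U}, \partial_X\widetilde{U}, \partial_X^2\widetilde{U}, \partial_X^3\widetilde{U}$ near zero, the $\dot{\tau}$ estimate \eqref{eq:taubound}, the modulation identity \eqref{eq:kappaxi}, Lemma \ref{lem:1-dottau}, and the $L^\infty$ bounds on $H[\partial_X^kU]$ from Lemmas \ref{lem:H[partialjU]} and \ref{lem:H[U]}. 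The factor $1/(1-\dot\tau)$ is $\le 2$ by Lemma \ref{lem:1-dottau} and contributes only a harmless constant.

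First I would dispatch the small terms. The Hilbert transform term $e^{-s}H[\partial_X^4U]/(1-\dot\tau)$ is bounded by $C M^{7/2} e^{-s} \le C M^{7/2}\epsilon \le \epsilon^{1/8}$ once $\epsilon$ is small relative to $M$. The three terms carrying an explicit $\dot\tau$ (namely $\dot\tau\,\partial_X^2\overline{U}\partial_X^3\overline{U}$, $\dot\tau\,\partial_X\overline{U}\partial_X^4\overline{U}$, and the $\dot\tau\,\overline{U}\,\partial_X^5\overline{U}$ piece of the last term) are each $\lesssim e^{-3s/4} \le \epsilon^{3/4} \le \epsilon^{1/8}$ using the uniform bounds $|\partial_X^k\overline{U}|\le C$ on $|X|\le l \le 1/5$. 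For the modulation piece $e^{s/2}(\kappa-\dot\xi)\,\partial_X^5\overline{U}/(1-\dot\tau)$, I would use the identity \eqref{eq:kappaxi} together with \eqref{eq:3xUat0} and Lemma \ref{lem:H[partialjU]} to get
\[
|e^{s/2}(\kappa-\dot\xi)| \le \frac{|e^{-s}H[\partial_X^2U](0,s)|}{|\partial_X^3U(0,s)|} \lesssim M^{3/2} e^{-s},
\]
so this term is $\lesssim M^{3/2}\epsilon \le \epsilon^{1/8}$ for $\epsilon$ small.

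Next come the quadratic terms in the $\widetilde{U}$-derivatives. The term $\partial_X^2\widetilde{U}\,\partial_X^3\widetilde{U}/(1-\dot\tau)$ is $\lesssim (\log M)^2\epsilon^{2/10} l^3 \ll \epsilon^{1/10}$. The term $\partial_X^1\widetilde{U}\,\partial_X^4\overline{U}/(1-\dot\tau)$ is bounded by $C\log M\,\epsilon^{1/10} l^3 \le (\log M)^{-1}\epsilon^{1/10}$. The term $\widetilde{U}\,\partial_X^5\overline{U}/(1-\dot\tau)$ uses \eqref{eq:tildeUnear0} and is likewise $\lesssim \log M\,\epsilon^{1/10}l^4 \le (\log M)^{-1}\epsilon^{1/10}$. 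The term $\partial_X^2\widetilde{U}\,\partial_X^3\overline{U}/(1-\dot\tau)$ is $\lesssim \log M\,\epsilon^{1/10} l^2 = (\log M)^{-3}\epsilon^{1/10}$.

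The only term that actually saturates the claimed bound is $10\,\partial_X^3\widetilde{U}\,\partial_X^2\overline{U}/(1-\dot\tau)$. Using \eqref{eq:3xtildeUnear0} and $|\partial_X^2\overline{U}(X)|\le 1$ on $|X|\le l$ from \eqref{eq:barUnear0}, this is bounded by $C\log M\,\epsilon^{1/10} l$. The key observation is that $l=(\log M)^{-2}$, so $\log M\cdot l = (\log M)^{-1}$, which gives the $(\log M)^{-1}\epsilon^{1/10}$ contribution on the right-hand side. Summing all eight terms yields the claim. The main (minor) obstacle is merely bookkeeping: one must be careful that the power of $l$ paired with each $\widetilde{U}$-derivative matches the loss one can afford, and that the constant factors do not overwhelm the gains from $l$; the choice $l=(\log M)^{-2}$ in Section \ref{sec:result} is precisely tuned so that this works out.
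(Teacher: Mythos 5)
Your proposal is correct and follows essentially the same route as the paper: term-by-term bounds using the near-zero estimates \eqref{eq:barUnear0}, the bootstrap bounds \eqref{eq:tildeUnear0}--\eqref{eq:3xtildeUnear0}, the identity \eqref{eq:kappaxi} with Lemma \ref{lem:H[partialjU]} for the modulation piece, powers of $e^{-s}\le\epsilon$ to absorb $M$-factors, and the key observation that the dominant term $\partial_X^3\widetilde{U}\,\partial_X^2\overline{U}\lesssim \log M\,\epsilon^{1/10}l=(\log M)^{-1}\epsilon^{1/10}$ because $l=(\log M)^{-2}$. The only cosmetic difference is that the paper groups the terms into a $\dot\tau$-weighted part and a remainder before estimating, which does not change the substance.
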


\begin{proof}
From \eqref{eq:ntildeUforce} with $n=4$, we write
\begin{align*}
F_{\partial^4_X\tilde{U}}=\frac{1}{1-\dot{\tau}}(e^{-s}H[\partial^4_XU]+\dot{\tau}\widetilde{F}^4_{\dot{\tau}}+\widetilde{F}^4_{\text{no }\dot{\tau}}),
\end{align*}
where 
\begin{gather*}
\widetilde{F}^4_{\dot{\tau}}=-\big(10\partial^2_X\overline{U}\partial^3_X\overline{U}+5\partial_X\overline{U}\partial^4_X\overline{U}+\overline{U}\partial^5_X\overline{U}\big),\\
\widetilde{F}^4_{\text{no }\dot{\tau}}=-\big(10\partial^3_X\widetilde{U}\partial^2_X\overline{U}+10\partial_X^2\widetilde{U}\partial^3_X\overline{U}+10\partial^2_X\widetilde{U}\partial^3_X\widetilde{U}+5\partial_X\widetilde{U}\partial_X^4\overline{U}+\widetilde{U}\partial^5_X\overline{U}+e^\frac{s}{2}(\kappa-\dot{\xi})\partial^5_X\overline{U}\big).
\end{gather*}
Using the observation \eqref{eq:barUnear0} for $|X|\leq l\leq \frac{1}{5}$, we have
\begin{align*}
|\widetilde{F}^4_{\dot{\tau}}|\leq 270.
\end{align*}
Using the assumptions \eqref{eq:xtildeUnear0}, Lemma \ref{lem:H[partialjU]} with $j=2$ and recall $l=(\log M)^{-2}$, we have
\begin{align*}
|\widetilde{F}^4_{\text{no }\dot{\tau}}|&\lesssim \epsilon^\frac{1}{8}+\log M\epsilon^\frac{1}{10}(l+l^2+l^3+l^3+l^4)+e^{-s}\frac{H[\partial^2_XU](0,s)}{\partial^3_XU(0,s)}\\
&\lesssim \epsilon^\frac{1}{8}+\log M\epsilon^\frac{1}{10}l+e^{-s}M^\frac{3}{2}\\
&\lesssim \epsilon^\frac{1}{8}+(\log M)^{-1}\epsilon^\frac{1}{10},
\end{align*}
where we use a power of $e^{-s}$ to absorb the $M$ factor in the last term. By Lemma \ref{lem:H[partialjU]} with $j=4$ and  assumption \eqref{eq:taubound}, we get
\begin{align*}
|F_{\partial^4_X\tilde{U}}|&\lesssim e^{-s} M^\frac{7}{2}+e^{-\frac{3}{4}s}+\epsilon^\frac{1}{8}+(\log M)^{-1}\epsilon^\frac{1}{10}\lesssim \epsilon^\frac{1}{8}+(\log M)^{-1}\epsilon^\frac{1}{10},
\end{align*}
where we use a power of $\epsilon$ to absorb the $M$ factor, and then combine terms.
\end{proof}

\begin{lemma}[$F_{\tilde{U}}$,   $F_{\partial_X\tilde{U}}$, $F_{\partial_X^2U},\ F_{\partial^3_XU}$ for $l\leq |X|\leq \frac{1}{2}e^{\frac{3}{2}s}$]
For any $(X,s)$ such that $l\leq |X|\leq \frac{1}{2}e^{\frac{3}{2}s}$,
\begin{align*}
|F_{\widetilde{U}}(X,s)|&\lesssim Me^{-\frac{1}{4}s},\\
|F_{\partial_X\widetilde{U}}(X,s)|&\lesssim
e^{-s}(1+X^2)^{-\frac{1}{3}}\big(M^\frac{1}{4}+\log(1+X^2)\big)+e^{-\frac{3}{4}s}(1+X^2)^{-\frac{1}{3}}+\epsilon^\frac{1}{11}(1+X^2)^{-\frac{2}{3}},
\\
|F_{\partial_X^2U}(X,s)|&\lesssim e^{-s}(1+X^2)^{-\frac{1}{3}}\big(M^\frac{3}{4}+M^\frac{1}{4}\log(1+X^2)\big)+Me^{-\frac{3}{4}s}(1+X^2)^{-\frac{1}{3}},\\
|F_{\partial^3_XU}(X,s)|&\lesssim e^{-s}M^\frac{5}{2}+M^\frac{1}{2}(1+X^2)^{-\frac{2}{3}}.
\end{align*}\label{lem:forcemiddle}
\end{lemma}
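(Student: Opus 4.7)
The plan is to treat each of the four forcing terms separately by substituting in their explicit definitions (from equations just after \eqref{eq:eq4xtildeU}, and equations \eqref{eq:2partialU}, \eqref{eq:3partialU}), and then bounding each summand using a combination of: the stationary profile estimates \eqref{eq:barUall}; the bootstrap assumptions \eqref{eq:tildeUmiddle}--\eqref{eq:3xUmiddle}; the modulation bound Lemma~\ref{lem:1-dottau} together with \eqref{eq:taubound}; the Hilbert transform bounds Lemmas~\ref{lem:H[U]}, \ref{lem:H[partialjU]}, \ref{lem:H1x}, \ref{lem:H2x}; and the identity \eqref{eq:kappaxi} together with \eqref{eq:3xUat0} to bound the modulation quantities $e^{s/2}(\kappa-\dot{\xi})$ and $e^{-s/2}\dot{\kappa}$ by (a constant times) $e^{-s}M^{3/2}$ and $Me^{-s/4}$ respectively.

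For $F_{\widetilde{U}}$, the Hilbert transform piece $e^{-s}H[U+e^{s/2}\kappa]$ gives the dominant $Me^{-s/4}$ contribution by Lemma~\ref{lem:H[U]}, while the $\partial_X\overline{U}(\dot{\tau}\overline{U}+e^{s/2}(\kappa-\dot{\xi}))$ piece is bounded using $|\partial_X\overline{U}||\overline{U}|\leq (1+X^2)^{-1/6}$ and the modulation estimates, yielding smaller terms that are absorbed. For $F_{\partial_X\widetilde{U}}$, I first apply Lemma~\ref{lem:H1x} to the $e^{-s}H[\partial_X U]$ piece; the subtle point is that the output $(1+X^2)^{-1/4}$ must be rewritten as $(1+X^2)^{-1/3}(1+X^2)^{1/12}$, and the range constraint $|X|\leq \tfrac{1}{2}e^{3s/2}$ converts $(1+X^2)^{1/12}$ into $e^{s/4}$, producing exactly the $e^{-3s/4}(1+X^2)^{-1/3}$ term claimed. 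The $\widetilde{U}\,\partial_X^2\overline{U}$ piece produces the $\epsilon^{1/11}(1+X^2)^{-2/3}$ term via \eqref{eq:tildeUmiddle} and \eqref{eq:barUall}; the remaining $\dot{\tau}$- and $(\kappa-\dot{\xi})$-pieces are strictly smaller and are absorbed into the three named terms after using $(1+X^2)^{-5/6}\leq (1+X^2)^{-1/3}$ on the given range.

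For $F_{\partial_X^2 U}=\frac{e^{-s}}{1-\dot{\tau}}H[\partial_X^2 U]$, the bound is a direct application of Lemma~\ref{lem:H2x} together with the same rewriting $(1+X^2)^{-1/4}=(1+X^2)^{-1/3}(1+X^2)^{1/12}\leq e^{s/4}(1+X^2)^{-1/3}$, which converts the $M(1+X^2)^{-1/4}$ output into the $Me^{-3s/4}(1+X^2)^{-1/3}$ term. For $F_{\partial_X^3 U}=\frac{e^{-s}}{1-\dot{\tau}}H[\partial_X^3 U]-\frac{3}{1-\dot{\tau}}(\partial_X^2 U)^2$, Lemma~\ref{lem:H[partialjU]} yields the $e^{-s}M^{5/2}$ term, and squaring the bootstrap assumption \eqref{eq:2xUmiddle} yields the $M^{1/2}(1+X^2)^{-2/3}$ term.

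The main obstacle is purely bookkeeping: several intermediate terms (for instance, the $e^{s/2}(\kappa-\dot{\xi})\,\partial_X^2\overline{U}$ contribution to $F_{\partial_X\widetilde{U}}$, which carries a bare $M^{3/2}$) have to be shown to be absorbable into the declared right-hand side. This is achieved by writing the unwanted $e^{-s}M^{3/2}$ as $e^{-3s/4}\cdot(e^{-s/4}M^{3/2})$ and choosing $\epsilon=\epsilon(M)$ small enough that $e^{-s/4}\leq \epsilon^{1/4}$ kills the excess power of $M$; analogous choices of $\epsilon$ absorb every other subleading term. Once this smallness in $\epsilon$ is fixed, combining all pieces yields the four stated bounds.
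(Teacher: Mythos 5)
Your proposal is correct and follows essentially the same route as the paper: the same splitting of each forcing term, the same use of Lemmas \ref{lem:H[U]}, \ref{lem:H[partialjU]}, \ref{lem:H1x}, \ref{lem:H2x} together with \eqref{eq:kappaxi}, \eqref{eq:3xUat0}, \eqref{eq:taubound}, and the same conversion $(1+X^2)^{-\frac{1}{4}}=(1+X^2)^{\frac{1}{12}}(1+X^2)^{-\frac{1}{3}}\lesssim e^{\frac{s}{4}}(1+X^2)^{-\frac{1}{3}}$ on the range $|X|\leq\frac{1}{2}e^{\frac{3}{2}s}$, with subleading $M$-factors absorbed by powers of $e^{-s}\leq\epsilon$. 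The only cosmetic difference is that the paper substitutes \eqref{eq:kappaxi} directly into $F_{\widetilde{U}}$ and $F_{\partial_X\widetilde{U}}$ before estimating, whereas you bound $e^{-\frac{s}{2}}\dot{\kappa}$ and $e^{\frac{s}{2}}(\kappa-\dot{\xi})$ separately, which yields the same bounds.
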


\begin{proof}
Using \eqref{eq:kappaxi}, we rewrite $F_{\widetilde{U}}(X,s)$ as
\begin{align*}
    F_{\widetilde{U}}(X,s)&=\frac{e^{-s}}{1-\dot{\tau}}\Big(H[U+e^\frac{s}{2}\kappa](X,s)-H[U+e^\frac{s}{2}\kappa](0,s)-\frac{H[\partial^2_XU](0,s)}{\partial^3_XU(0,s)}\Big)\\
    &\qquad-\partial_X\overline{U}(X)\Big(\frac{\dot{\tau}\overline{U}(X)}{1-\dot{\tau}}+\frac{e^{-s}}{1-\dot{\tau}}\frac{H[\partial_X^2U](0,s)}{\partial^3_XU(0,s)}\Big)\\
    &=\frac{e^{-s}}{1-\dot{\tau}}\Big(H[U+e^\frac{s}{2}\kappa](X,s)-H[U+e^\frac{s}{2}\kappa](0,s)\\
    &\qquad-(1+\partial_X\overline{U}(X))\frac{H[\partial^2_XU](0,s)}{\partial^3_XU(0,s)}\Big)-\overline{U}(X)\partial_X\overline{U}(X)\frac{\dot{\tau}}{1-\dot{\tau}}.
\end{align*} 
Using Lemma \ref{lem:H[U]} and \ref{lem:H[partialjU]} with $j=2$, \eqref{eq:3xUat0}, \eqref{eq:taubound} and Lemma \ref{lem:1-dottau}, we have
\begin{align*}
|F_{\widetilde{U}}(X,s)|&\lesssim e^{-s}\Big(Me^{\frac{3}{4}s}+Me^{\frac{3}{4}s}+M^\frac{3}{2}\Big)+(1+X^2)^\frac{1}{6}(1+X^2)^{-\frac{1}{3}}e^{-\frac{3}{4} s}\\
&\lesssim Me^{-\frac{1}{4}s}+M^\frac{3}{2}e^{-s}+(1+X^2)^{-\frac{1}{6}}e^{-\frac{3}{4} s}\\
&\lesssim Me^{-\frac{1}{4}s},
\end{align*} 
where we use a small power of $e^{-s}$ (which is $\leq \epsilon$) to absorb the $M$ factor.

For $F_{\partial_X\widetilde{U}}$, we rewrite it as 
\begin{align*}
F_{\partial_X\tilde{U}}(X,s)=&\frac{e^{-s}}{1-\dot{\tau}}H[\partial_XU](X,s)-\frac{\dot{\tau}}{1-\dot{\tau}}\partial_X\overline{U}(X)^2\\
&\quad-\partial^2_X\overline{U}(X)\Big(\frac{\widetilde{U}(X,s)+\dot{\tau}\overline{U}(X)}{1-\dot{\tau}}+\frac{e^{-s}}{1-\dot{\tau}}\frac{H[\partial^2_XU](0,s)}{\partial^3_XU(0,s)}\Big).
\end{align*}
By \eqref{eq:H1xUmiddle} in Lemma \ref{lem:H1x}, \eqref{eq:barUall}, \eqref{eq:tildeUmiddle}, \eqref{eq:taubound}, \eqref{lem:H[partialjU]}, we have
\begin{align*}
|F_{\partial_X\tilde{U}}|&\lesssim e^{-s}\Big(M^\frac{1}{4}(1+X^2)^{-\frac{1}{3}}+(1+X^2)^{-\frac{1}{3}}\log(1+X^2)+(1+X^2)^{-\frac{1}{4}}\Big)\\
&\qquad+e^{-\frac{3}{4}s}(1+X^2)^{-\frac{2}{3}}+(1+X^2)^{-\frac{5}{6}}\Big(\epsilon^\frac{1}{11}(1+X^2)^\frac{1}{6}+e^{-\frac{3}{4}s}(1+X^2)^\frac{1}{6}+e^{-s}M^\frac{3}{2}\Big)\\
&\lesssim e^{-s}(1+X^2)^{-\frac{1}{3}}\big(M^\frac{1}{4}+\log(1+X^2)\big)+e^{-s}(1+X^2)^\frac{1}{12}(1+X^2)^{-\frac{1}{3}}\\
&\qquad+(1+X^2)^{-\frac{2}{3}}(\epsilon^\frac{1}{11}+e^{-\frac{3}{4}s})+e^{-s}M^\frac{3}{2}(1+X^2)^{-\frac{5}{6}}\\
&\lesssim e^{-s}(1+X^2)^{-\frac{1}{3}}\big(M^\frac{1}{4}+\log(1+X^2)\big)+e^{-\frac{3}{4}s}(1+X^2)^{-\frac{1}{3}}+\epsilon^\frac{1}{11}(1+X^2)^{-\frac{2}{3}},
\end{align*}
where we used $(1+X^2)^\frac{1}{12}\lesssim e^{\frac{1}{4}s}$, $e^{-\frac{3}{4}s}\leq \epsilon^\frac{3}{4}\leq \epsilon^\frac{1}{11}$, $e^{-s}M^\frac{3}{2}\leq \epsilon^\frac{1}{11}$ by making $\epsilon$ sufficiently small.

The bound for $F_{\partial^2_XU}$ comes from multiplying \eqref{eq:H2xUmiddle} by $e^{-s}$, then using $(1+X^2)^\frac{1}{12}\lesssim e^{\frac{1}{4}s}$ to get the last term.

The bound for $F_{\partial^3_XU}$ follows directly from Lemma \ref{lem:H[partialjU]} with $j=3$ and \eqref{eq:2xUmiddle}.
\end{proof}

\begin{lemma}[$F_{\partial^3_XU}$ for $|X|\geq \frac{1}{2}e^{\frac{3}{2}s}$]
For any $(X,s)$ such that $|X|\geq \frac{1}{2}e^{\frac{3}{2}
s}$, 
\begin{align*}
|F_{\partial^3_XU}(X,s)|&\lesssim M^\frac{5}{2}e^{-s}.
\end{align*}\label{lem:forcefar}
\end{lemma}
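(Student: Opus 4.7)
The forcing term comes directly from equation \eqref{eq:3partialU}, namely
\begin{equation*}
F_{\partial^3_XU} = \frac{e^{-s}}{1-\dot{\tau}}H[\partial^3_XU]-\frac{3}{1-\dot{\tau}}(\partial_X^2U)^2.
\end{equation*}
The plan is simply to bound the two terms on the right using the tools already established. By Lemma \ref{lem:1-dottau}, the prefactor $1/(1-\dot{\tau})$ is harmless (bounded by $1+2\epsilon^{3/4}$). For the Hilbert transform term, I would invoke the uniform bound $\|H[\partial^3_XU](\cdot,s)\|_{L^\infty}\lesssim M^{5/2}$ from Lemma \ref{lem:H[partialjU]}, which holds for all $X$ (including the far region), giving a contribution of order $M^{5/2}e^{-s}$.

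For the quadratic term, I would use the far-region bootstrap assumption \eqref{eq:2xUfar}, namely $|\partial^2_XU(X,s)|\leq 4M^{1/4}e^{-s}$ for $|X|\geq \frac{1}{2}e^{3s/2}$. Squaring yields $(\partial^2_XU)^2\lesssim M^{1/2}e^{-2s}$, and since $e^{-s}\leq 1$ we have $M^{1/2}e^{-2s}\leq M^{5/2}e^{-s}$, so this term is absorbed into the bound coming from the Hilbert transform.

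Combining the two estimates gives $|F_{\partial^3_XU}(X,s)|\lesssim M^{5/2}e^{-s}$ as claimed. There is no real obstacle here: unlike the middle-range estimate in Lemma \ref{lem:forcemiddle}, where spatial decay in $X$ must be tracked carefully and the delicate Hilbert transform estimate from Lemma \ref{lem:H2x}/\ref{lem:H1x} is needed, in the far region we only need a uniform-in-$X$ bound and the quadratic term is automatically small because both $|\partial^2_XU|$ factors carry the temporal decay $e^{-s}$.
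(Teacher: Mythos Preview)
Your proof is correct and follows essentially the same approach as the paper: bound the Hilbert transform term via Lemma~\ref{lem:H[partialjU]} to get $M^{5/2}e^{-s}$, bound the quadratic term via the far-region assumption \eqref{eq:2xUfar} to get $M^{1/2}e^{-2s}$, and absorb the latter into the former. The paper's own proof is a one-liner recording exactly these two contributions.
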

\begin{proof}
By Lemma \ref{lem:H[U]} and \eqref{eq:2xUfar}, we get 
\begin{align*}
|F_{\partial^3_XU}(X,s)|\lesssim M^\frac{5}{2}e^{-s}+M^\frac{1}{2}e^{-2s}\lesssim M^\frac{5}{2}e^{-s}.
\end{align*}
\end{proof}

\begin{lemma}[$F_U$] Let $F_U$ be the right hand side of \eqref{eq:ansatz}, i.e.
\begin{align*}
F_U=-\frac{e^{-\frac{s}{2}}\dot{\kappa}}{1-\dot{\tau}}+\frac{e^{-s}}{1-\dot{\tau}}H[U+e^\frac{s}{2}\kappa].
\end{align*}
For any $s\geq -\log\epsilon$, we have
\begin{align*}
\|F_U(\cdot,s)\|_{L^\infty}\lesssim M^\frac{3}{2}e^{-\frac{1}{4}s}.
\end{align*}\label{lem:FU}
\end{lemma}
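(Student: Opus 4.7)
The plan is straightforward: split $F_U$ into its two summands and estimate each one pointwise in $X$, then combine. The second summand $\frac{e^{-s}}{1-\dot\tau}H[U+e^{s/2}\kappa]$ is controlled directly. By Lemma \ref{lem:1-dottau} the prefactor $1/(1-\dot\tau)$ is bounded by a constant arbitrarily close to $1$, and Lemma \ref{lem:H[U]} gives $\|H[U+e^{s/2}\kappa](\cdot,s)\|_{L^\infty}\lesssim Me^{3s/4}$. Multiplying by $e^{-s}$ yields a bound of order $Me^{-s/4}$, which is better than what we need.

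For the first summand $-\frac{e^{-s/2}\dot\kappa}{1-\dot\tau}$, I would first eliminate $\dot\kappa$ using the modulation identity \eqref{eq:kappaxi}, which rearranges to
$$e^{-s/2}\dot\kappa = e^{-s}H[U+e^{s/2}\kappa](0,s) + \frac{e^{-s}H[\partial_X^2U](0,s)}{\partial_X^3U(0,s)}.$$
The first piece on the right is a pointwise value of the same Hilbert transform controlled above, so it contributes $O(Me^{-s/4})$. For the second piece, Lemma \ref{lem:H[partialjU]} gives $|H[\partial_X^2U](0,s)|\lesssim M^{3/2}$, and \eqref{eq:3xUat0} bounds $\partial_X^3U(0,s)>5$ away from zero, so this piece is $O(M^{3/2}e^{-s})$. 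Dividing by $1-\dot\tau$ is harmless by Lemma \ref{lem:1-dottau}.

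Combining, $\|F_U(\cdot,s)\|_{L^\infty}\lesssim Me^{-s/4}+M^{3/2}e^{-s}$. Writing $M^{3/2}e^{-s}=M^{3/2}e^{-s/4}\cdot e^{-3s/4}$ and using $e^{-3s/4}\leq\epsilon^{3/4}<1$ absorbs the second term into the desired final form, giving $\lesssim M^{3/2}e^{-s/4}$ as claimed. There is no real obstacle here; the proof is short and reuses bounds already assembled earlier in this section. The $M^{3/2}$ factor in the final inequality arises solely from the $H[\partial_X^2U](0,s)$ contribution to $\dot\kappa$ via \eqref{eq:kappaxi}; the rest of $F_U$ only produces an $M$.
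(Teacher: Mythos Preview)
Your proposal is correct and follows essentially the same approach as the paper: both use the modulation identity \eqref{eq:kappaxi} to rewrite $e^{-s/2}\dot\kappa$ as $e^{-s}H[U+e^{s/2}\kappa](0,s)+e^{-s}H[\partial_X^2U](0,s)/\partial_X^3U(0,s)$, then apply Lemmas \ref{lem:H[U]}, \ref{lem:H[partialjU]}, \ref{lem:1-dottau} and \eqref{eq:3xUat0} to bound each piece, arriving at $\|F_U\|_{L^\infty}\lesssim Me^{-s/4}+M^{3/2}e^{-s}\lesssim M^{3/2}e^{-s/4}$.
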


\begin{proof}
By the second equality in \eqref{eq:kappaxi}, we have
\begin{align*}
\frac{e^{-\frac{s}{2}}\dot{\kappa}}{1-\dot{\tau}}=\frac{e^{-s}}{1-\dot{\tau}}\Big(H[U+e^\frac{s}{2}\kappa](0,s)+\frac{H[\partial_X^2U](0,s)}{\partial_X^3U(0,s)}\Big).
\end{align*}
By Lemma \ref{lem:H[U]}, Lemma \ref{lem:H[partialjU]} with $j=2$ and \eqref{eq:3xUat0}, we have
\begin{align*}
\big|\frac{e^{-\frac{s}{2}}\dot{\kappa}}{1-\dot{\tau}}\big|\lesssim e^{-s}(Me^{\frac{3}{4}s}+M^\frac{3}{2}).
\end{align*}
Hence, 
\begin{align*}
\|F_U(\cdot,s)\|_{L^\infty}\lesssim e^{-s}(Me^{\frac{3}{4}s}+M^\frac{3}{2})+e^{-s}Me^{\frac{3}{4}s}\lesssim M^\frac{3}{2}e^{-\frac{1}{4}s}.
\end{align*}
\end{proof}

\section{Closure of bootstrap}\label{sec:closure}

\subsection{Lagrangian trajectories}
Let $\Phi:\mathbb{R}\times[s_0,\infty)\to\mathbb{R}$ be the Lagrangian trajectory of $U$, i.e. for each $X_0$, $\Phi(X_0,s)$ is the position at time $s$ such that
\begin{equation}
\begin{aligned}
\frac{d}{ds}\Phi(X_0,s)&=V\circ \Phi(X_0,s),\\
\Phi(X_0,s_0)&=X_0,
\end{aligned}\label{eq:trajectory}
\end{equation}
where $V$ is the transport speed defined in \eqref{eq:speed}.

\begin{lemma}[Lower bound on transport speed]
For $|X_0|\geq l$, we have
\begin{align*}
|\Phi(X_0,s)|\geq |X_0|e^{\frac{1}{5}(s-s_0)}.
\end{align*}
In other words, once a particle is at least away from 0 by a distance $l$, it will escape to infinity exponentially fast. 
\label{lem:lowerLagrangian}
\end{lemma}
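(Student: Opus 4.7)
The key step is to show the sign-compatible linear lower bound
\begin{equation*}
\mathrm{sgn}(X)\, V(X,s) \;\geq\; \tfrac{1}{5}|X| \qquad\text{whenever }|X|\geq l;
\end{equation*}
once this is in hand, the sign condition prevents trajectories of \eqref{eq:trajectory} starting with $|X_0|\geq l$ from crossing into $\{|X|<l\}$, and a logarithmic Gr\"onwall argument delivers $|\Phi(X_0,s)|\geq |X_0|e^{(s-s_0)/5}$. I decompose the transport speed as
\begin{equation*}
V(X,s) \;=\; \tfrac{3}{2}X \;+\; \frac{U(X,s)}{1-\dot\tau} \;+\; \frac{e^{s/2}(\kappa-\dot\xi)}{1-\dot\tau}.
\end{equation*}
By the second equality in \eqref{eq:kappaxi}, Lemma \ref{lem:H[partialjU]}, and the lower bound $\partial_X^3 U(0,s)\geq 5$ from \eqref{eq:3xUat0}, the last summand is uniformly $O(M^{3/2}e^{-s})$; by Lemma \ref{lem:1-dottau} the factor $1/(1-\dot\tau)$ differs from $1$ by $O(\epsilon^{3/4})$. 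Both corrections are negligible against $\tfrac{1}{5}|X|$ on $|X|\geq l=(\log M)^{-2}$, provided $\epsilon$ is small relative to $M$.

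The only substantive ingredient is a pointwise bound on $U$. On the middle region $|X|\leq \tfrac{1}{2}e^{3s/2}$, the constraint $U(0,s)=0$ together with $\|\partial_XU(\cdot,s)\|_{L^\infty}=1$ from \eqref{cor:1xULinfty} yields $|U(X,s)|\leq |X|$, whence
\begin{equation*}
\mathrm{sgn}(X)\,V(X,s) \;\geq\; \bigl(\tfrac{3}{2} - 1 - 2\epsilon^{3/4}\bigr)|X| \;-\; CM^{3/2}e^{-s} \;\geq\; \tfrac{1}{5}|X|
\end{equation*}
as soon as $CM^{3/2}e^{-s}\leq \tfrac{1}{10}\,l$, which holds by the choice of $\epsilon=\epsilon(M)$ in Section \ref{sec:result}. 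On the far region $|X|\geq \tfrac{1}{2}e^{3s/2}$, the bound \eqref{eq:ULinfty} combined with $|\kappa|\leq M$ gives $|U(X,s)|\leq 2Me^{s/2}$, which is $o(|X|)$ since $e^{3s/2}\gg Me^{s/2}$ for $s\geq -\log\epsilon$; the $\tfrac{3}{2}X$ term then dominates trivially and the inequality follows.

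To conclude, I note that the sign-compatibility of $V$ on $\{|X|\geq l\}$ forces $|\Phi(X_0,s)|$ to be monotone non-decreasing as long as it remains $\geq l$; since $|\Phi(X_0,s_0)|=|X_0|\geq l$, the bound persists for all $s\geq s_0$ and the pointwise estimate on $V$ applies along the entire trajectory. Dividing \eqref{eq:trajectory} by $\Phi$ and integrating gives
\begin{equation*}
\log|\Phi(X_0,s)| - \log|X_0| \;=\; \int_{s_0}^{s} \frac{V(\Phi(X_0,s'),s')}{\Phi(X_0,s')}\, ds' \;\geq\; \tfrac{1}{5}(s-s_0),
\end{equation*}
which is the claim. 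The only real obstacle is the bookkeeping $CM^{3/2}e^{-s}\leq \tfrac{1}{10}\,l$, i.e. ensuring the $e^{-s}$ errors coming from the non-local Hilbert-transform terms are beaten by the fixed length scale $l=(\log M)^{-2}$; this is consistent with the $\epsilon=\epsilon(M)$ regime already fixed, and no further analytic difficulty enters.
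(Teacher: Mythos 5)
Your proof is correct and follows essentially the same route as the paper: you bound the modulation term $e^{s/2}(\kappa-\dot\xi)/(1-\dot\tau)$ by $O(M^{3/2}e^{-s})\leq \tfrac{1}{10}l$ via \eqref{eq:kappaxi} and the Hilbert-transform estimates, bound $|U(X,s)|\leq |X|$ (the paper gets this globally from the mean value theorem, $U(0,s)=0$ and \eqref{cor:1xULinfty}, so your separate far-region case via \eqref{eq:ULinfty} is unnecessary though harmless), and then integrate the resulting sign-compatible bound $\mathrm{sgn}(X)\,V(X,s)\geq \tfrac{1}{5}|X|$ along the trajectory, exactly as the paper does with $\tfrac{1}{2}\tfrac{d}{ds}\Phi^2\geq\tfrac{1}{5}\Phi^2$. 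Your explicit observation that trajectories starting in $\{|X_0|\geq l\}$ cannot re-enter $\{|X|<l\}$ merely makes precise a continuation step the paper leaves implicit.
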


\begin{proof}
By Mean Value Theorem,  \eqref{cor:1xULinfty}, and the constraint \eqref{eq:constraints},
we have
\begin{align*}
|U(X,s)|\leq |U(0,s)|+ \|\partial_XU(\cdot,s)\|_{L^\infty}|X|\leq |X|.
\end{align*}
By \eqref{eq:kappaxi}, \eqref{eq:3xUat0} and Lemma \ref{lem:H[U]}, we have
\begin{align*}
|e^\frac{s}{2}(\kappa-\dot{\xi})|&=e^{-s}\Big|\frac{H[\partial_X^2U](0,s)}{\partial^3_XU(0,s)}\Big|\lesssim e^{-s}M^\frac{3}{2}\leq \frac{1}{10}l.
\end{align*}
Then for $|X|\geq l$, by \eqref{eq:speed} and Lemma \ref{lem:1-dottau}, we have
\begin{align*}
|V(X,s)|&\geq \frac{3}{2}|X|-(1+2\epsilon^\frac{3}{4})|X|-(1+2\epsilon^\frac{3}{4})\frac{1}{10}|X|\\
&=\frac{3}{2}|X|-(1+2\epsilon^\frac{3}{4})(1+\frac{1}{10})|X|\\
&\geq \frac{1}{5}|X|.
\end{align*}
Hence,
\begin{align*}
\frac{1}{2}\frac{d}{ds}\Phi^2(X_0,s)&=\big(V\circ\Phi(X_0,s)\big)\Phi(X_0,s)\geq \frac{1}{5}\Phi^2(X_0,s)\\
\implies\qquad|\Phi(X_0,s)|^2&\geq|X_0|^2e^{\frac{2}{5}(s-s_0)}\\
|\Phi(X_0,s)|&\geq|X_0|e^{\frac{1}{5}(s-s_0)}.
\end{align*}
\end{proof}

\begin{lemma}[Upper bound on Lagrangian trajectory]
For all $(X_0,s_0)$,
we have
\begin{align*}
|\Phi(X_0,s)|\leq (|X_0|+\frac{7}{2}Me^{\frac{1}{2}s_0})e^{\frac{3}{2}(s-s_0)}.
\end{align*}\label{lem:upperLagrangian}
\end{lemma}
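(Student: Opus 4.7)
The plan is to bound the transport speed $V$ linearly in $|X|$ with a controlled source term depending on $e^{s/2}$, and then feed this into a Gr\"onwall-type comparison for the ODE \eqref{eq:trajectory}. Unlike Lemma \ref{lem:lowerLagrangian}, we cannot exploit the sign of $V$ to get exponential escape; instead the $\frac{3}{2}X$ factor in \eqref{eq:speed} drives the upper-bound growth at rate $e^{\frac{3}{2}s}$, which is exactly what appears in the conclusion.

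First I would estimate $|V(X,s)|$ uniformly. From the bootstrap assumption \eqref{eq:ULinfty} we have $\|U(\cdot,s)+e^{\frac{s}{2}}\kappa\|_{L^\infty}\leq Me^{\frac{s}{2}}$, and from \eqref{eq:xibound} we have $|\dot{\xi}|\leq 2M$, so $|e^{\frac{s}{2}}\dot{\xi}|\leq 2Me^{\frac{s}{2}}$. Combined with Lemma \ref{lem:1-dottau}, the definition \eqref{eq:speed} gives
\begin{align*}
|V(X,s)|\leq (1+2\epsilon^\frac{3}{4})\big(\|U+e^{\frac{s}{2}}\kappa\|_{L^\infty}+|e^{\frac{s}{2}}\dot{\xi}|\big)+\frac{3}{2}|X|\leq 3(1+2\epsilon^\frac{3}{4})Me^{\frac{s}{2}}+\frac{3}{2}|X|.
\end{align*}
For $\epsilon\leq 10^{-4}$ we have $3(1+2\epsilon^{3/4})\leq \frac{7}{2}$, hence $|V(X,s)|\leq \frac{7}{2}Me^{\frac{s}{2}}+\frac{3}{2}|X|$ for all $(X,s)$.

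Next, set $y(s):=|\Phi(X_0,s)|$. From \eqref{eq:trajectory} and the bound above,
\begin{align*}
\frac{dy}{ds}\leq |V\circ\Phi(X_0,s)|\leq \frac{3}{2}y(s)+\frac{7}{2}Me^{\frac{s}{2}},\qquad y(s_0)=|X_0|.
\end{align*}
Multiplying through by the integrating factor $e^{-\frac{3}{2}(s-s_0)}$ and integrating from $s_0$ to $s$ yields
\begin{align*}
y(s)\leq |X_0|e^{\frac{3}{2}(s-s_0)}+\frac{7}{2}M\int_{s_0}^s e^{\frac{s'}{2}}e^{\frac{3}{2}(s-s')}\,ds'=|X_0|e^{\frac{3}{2}(s-s_0)}+\frac{7}{2}Me^{\frac{3}{2}s}\big(e^{-s_0}-e^{-s}\big).
\end{align*}
Bounding $e^{-s_0}-e^{-s}\leq e^{-s_0}$ and regrouping as $e^{\frac{3}{2}s}e^{-s_0}=e^{\frac{s_0}{2}}e^{\frac{3}{2}(s-s_0)}$ gives the desired inequality. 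The only step requiring care is verifying that the constant $\frac{7}{2}$ obtained from bounding $V$ does not degrade under Gr\"onwall; since $\int_{s_0}^s e^{-s'}\,ds'\leq e^{-s_0}$ is sharp up to a factor of $1$, no loss is incurred, and the $7/2$ passes through cleanly.
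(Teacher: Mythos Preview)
Your proof is correct and follows essentially the same approach as the paper: both use the integrating factor $e^{-\frac{3}{2}s}$ against the $\frac{3}{2}X$ term in $V$, bound the remaining part of $V$ by $3(1+2\epsilon^{3/4})Me^{s/2}\leq \frac{7}{2}Me^{s/2}$ using \eqref{eq:ULinfty} and \eqref{eq:xibound}, and integrate. The only cosmetic difference is that the paper works directly with $\Phi$ (obtaining $|\Phi(X_0,s)-X_0e^{\frac{3}{2}(s-s_0)}|\leq \frac{7}{2}Me^{\frac{1}{2}s_0}e^{\frac{3}{2}(s-s_0)}$ and then applying the triangle inequality), whereas you pass to $y(s)=|\Phi(X_0,s)|$ and run Gr\"onwall on the differential inequality; the resulting bound and constants are identical.
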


\begin{proof}
Multiplying the first equation of \eqref{eq:trajectory} by the integrating factor $e^{-\frac{3}{2}s}$ and plugging in the expression for $V$, we get
\begin{align*}
\frac{d}{ds}\big(e^{-\frac{3}{2}s}\Phi(X_0,s)\big)&=\frac{1}{1-\dot{\tau}}\Big(e^{-\frac{3}{2}s}U\circ\Phi(X_0,s)+e^{-s}\kappa-e^{-s}\dot{\xi}\Big)\\
e^{-\frac{3}{2}s}\Phi(X_0,s)&=e^{-\frac{3}{2}s_0}X_0+\frac{1}{1-\dot{\tau}}\int_{s_0}^s\Big( e^{-\frac{3}{2}s'}U\circ\Phi(X_0,s')+e^{-s'}\kappa-e^{-s'}\dot{\xi}\Big)\,ds'.
\end{align*}
Hence, by Lemma \ref{lem:1-dottau}, \eqref{eq:ULinfty}, \eqref{eq:xibound}, we have
\begin{align*}
|\Phi(X_0,s)-X_0e^{\frac{3}{2}(s-s_0)}|&\leq e^{\frac{3}{2}s}(1+2\epsilon^\frac{3}{4})\int_{s_0}^s 3Me^{-s'}\,ds'\\
&\leq 3\cdot\frac{7}{6} Me^{\frac{1}{2}s_0}(1-e^{-(s-s_0)})e^{\frac{3}{2}(s-s_0)}\\
&\leq \frac{7}{2}Me^{\frac{1}{2}s_0}
e^{\frac{3}{2}(s-s_0)}.
\end{align*}
By triangle inequality, the proof is complete.
\end{proof}

\begin{proposition}
The $L^\infty$-norm of $e^{-\frac{s}{2}}U(\cdot,s)+\kappa$ is bounded uniformly in $s$. More precisely,
\begin{align*}
\|e^{-\frac{s}{2}}U(\cdot,s)+\kappa\|_{L^\infty}\leq M,\quad\text{i.e.}\quad
\|U(\cdot,s)+e^\frac{s}{2}\kappa\|_{L^\infty}\leq Me^\frac{s}{2}.
\end{align*}
\end{proposition}

\begin{proof}
From \eqref{eq:BH} or \eqref{eq:ansatz} we get equation for $e^{-\frac{s}{2}}U(\cdot,s)+\kappa$ (see Appendix \ref{ap:derivation} for the derivation)
\begin{align}
    \partial_s(e^{-\frac{s}{2}}U+\kappa)+V\partial_X(e^{-\frac{s}{2}}U+\kappa)=\frac{e^{-\frac{3}{2}s}}{1-\dot{\tau}}H[U+e^\frac{s}{2}\kappa].\label{eq:U+kappa?}
\end{align}
By Lemma \ref{lem:H[U]}, we can bound the forcing term on the right hand side
\begin{align*}
\Big|\frac{e^{-\frac{3}{2}s}}{1-\dot{\tau}}H[U+e^\frac{s}{2}\kappa]\Big|\lesssim Me^{-\frac{3}{4}s}.
\end{align*}
Composing \eqref{eq:U+kappa?} with Lagrangian trajectory we have
\begin{align*}
    |(e^{-\frac{s}{2}}U+\kappa)\circ\Phi(X_0,s)|&\leq |e^{\frac{\log{\epsilon}}{2}}U(X_0,-\log{\epsilon})+\kappa(0)|\\
    &\qquad+\int_{-\log{\epsilon}}^s\frac{e^{-\frac{3}{2}s'}}{1-\dot{\tau}}\big|H[U+e^\frac{s'}{2}\kappa]\circ\Phi(X_0,s')\big|\,ds'\\
    &\leq \frac{M}{2}+C\int_{-\log{\epsilon}}^\infty Me^{-\frac{3}{4}s'}\,ds'\\
    &\leq \frac{M}{2}+CM\epsilon^\frac{3}{4}\leq \frac{3}{4}M 
\end{align*}
for all $X_0$. We thus close the assumption \eqref{eq:ULinfty}.
\end{proof}

\subsection{Derivatives of $\widetilde{U}$ for $|X|\leq l$}
We first prove that $\partial^4_X\widetilde{U}(X,s)$ is small for $X$ near 0 in order to use Fundamental Theorem of Calculus for lower derivatives of $\widetilde{U}$.
\begin{proposition}
For $|X|\leq l$, we have $$|\partial^4_X\widetilde{U}(X,s)|\leq \epsilon^\frac{1}{10}.$$
\end{proposition}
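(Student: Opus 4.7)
My plan is to propagate the estimate along backward Lagrangian trajectories for the evolution equation \eqref{eq:eq4xtildeU}. Fix $(X,s)$ with $|X|\leq l$ and let $Y(s')$ denote the backward trajectory of $V$ ending at $(X,s)$, defined for $s'\in[-\log\epsilon,s]$. Composing \eqref{eq:eq4xtildeU} with $Y$ gives the transport ODE
\begin{equation*}
\tfrac{d}{ds'}\bigl[(\partial_X^4\widetilde U)\circ Y\bigr]+D(s')\,(\partial_X^4\widetilde U)\circ Y=F_{\partial_X^4\widetilde U}\circ Y,\qquad D(s'):=\tfrac{11}{2}+\tfrac{5\partial_X\overline U\circ Y+5\partial_X\widetilde U\circ Y}{1-\dot\tau},
\end{equation*}
so the desired bound will follow from an integrating-factor argument once the damping $D$, the initial value, and the forcing along $Y$ are under control.

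The first step is to show that $|Y(s')|\leq l$ for all $s'\in[-\log\epsilon,s]$. If not, by continuity there would exist $s^{*}\in[-\log\epsilon,s)$ with $|Y(s^{*})|=l$ and $|Y|>l$ immediately to the right of $s^{*}$; forward propagation from $(Y(s^{*}),s^{*})$ via Lemma \ref{lem:lowerLagrangian} would then force $|Y(s)|\geq l\,e^{(s-s^{*})/5}>l$, contradicting $|X|\leq l$ (the boundary case $|X|=l$ being handled by continuity). Once this confinement is secured, \eqref{eq:barUnear0}, the bootstrap \eqref{eq:1xtildeUnear0}, and Lemma \ref{lem:1-dottau} give $|\partial_X\overline U|\leq 1$, $|\partial_X\widetilde U|\lesssim\log M\,\epsilon^{1/10}l^{3}\ll 1$, and $1/(1-\dot\tau)\leq 1+2\epsilon^{3/4}$ along $Y$, so $D(s')\geq \tfrac{11}{2}-5(1+2\epsilon^{3/4})-o(1)\geq \tfrac14$ for $\epsilon$ small. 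The integrating factor $\exp\!\bigl(\int D\bigr)$ then yields, with $s_{0}=-\log\epsilon$,
\begin{equation*}
|\partial_X^4\widetilde U(X,s)|\leq e^{-(s-s_{0})/4}\,|\partial_X^4\widetilde U(Y(s_{0}),s_{0})|+\int_{s_{0}}^{s}e^{-(s-s')/4}\,|F_{\partial_X^4\widetilde U}(Y(s'),s')|\,ds'.
\end{equation*}
The first term is $\leq\epsilon^{1/8}$ by \eqref{eq:initial4xtildeU} (which applies because $|Y(s_{0})|\leq l$ by Step 1), while Lemma \ref{lem:4tildeU} together with $\int_{s_{0}}^{s}e^{-(s-s')/4}\,ds'\leq 4$ bounds the integral by a constant times $\epsilon^{1/8}+(\log M)^{-1}\epsilon^{1/10}$. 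Choosing $\epsilon$ small (so $\epsilon^{1/8}=\epsilon^{1/40}\cdot\epsilon^{1/10}\ll\epsilon^{1/10}$) and then $M$ large (so the implied constant divided by $\log M$ is below $1/2$), the right-hand side is $\leq\epsilon^{1/10}$, closing the bootstrap \eqref{eq:4xtildeUnear0}.

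The main obstacle is the confinement argument in Step 1: it is what lets us simultaneously use the near-zero Taylor information $\partial_X\overline U\approx-1$ (giving positive damping $\approx 1/2$) and the small-region forcing bound of Lemma \ref{lem:4tildeU}. Without it, the forcing would have to be bounded using Lemma \ref{lem:H[partialjU]} with the $M^{7/2}$ factor from $\|H[\partial_X^{4}U]\|_{L^{\infty}}$, and the resulting estimate would be far too large to close the bootstrap. The escape Lemma \ref{lem:lowerLagrangian}, run in the \emph{forward} direction, is precisely what rules out backward trajectories entering $|Y|\leq l$ from the outside, so that confinement is automatic.
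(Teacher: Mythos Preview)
Your proof is correct and follows essentially the same approach as the paper: compose \eqref{eq:eq4xtildeU} with Lagrangian trajectories, use the damping lower bound $D\geq\tfrac14$, the initial data assumption \eqref{eq:initial4xtildeU}, and the forcing estimate of Lemma \ref{lem:4tildeU}, then integrate. The paper handles the confinement $|Y(s')|\leq l$ with a one-line parenthetical (``note that we must have $|X_0|\leq l$''), whereas you spell it out via Lemma \ref{lem:lowerLagrangian}; the paper also bounds the damping using the global fact $\|\partial_XU\|_{L^\infty}=1$ from \eqref{cor:1xULinfty} rather than splitting into $\partial_X\overline U+\partial_X\widetilde U$, but these are cosmetic differences. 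One small remark: in the paper's setup $\epsilon=\epsilon(M)$, so $M$ is fixed first and then $\epsilon$ is taken small---your closing sentence reverses this order, though it is harmless here since the two smallness conditions are independent.
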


\begin{proof}
Let $D_{\partial_X^4\tilde{U}}$ be the damping term in \eqref{eq:eqnxtildeU} with $n=4$, then
\begin{align*}
D_{\partial_X^4\tilde{U}}=\frac{11}{2}+\frac{5\partial_XU}{1-\dot{\tau}}\geq \frac{11}{2}-5(1+2\epsilon^\frac{3}{4})\geq \frac{1}{4}.
\end{align*}
We also recall the bound on $F_{\partial^4_X\tilde{U}}$ given by Lemma \ref{lem:4tildeU}. Composing \eqref{eq:eqnxtildeU}, $n=4$  with the Lagrangian trajectory (note that we must have $|X_0|\leq l$) and using Gr\"onwall's inequality, we get
\begin{align*}
|\partial^4_X\tilde{U}\circ\Phi(X_0,s)|\leq& |\partial^4_X\widetilde{U}(X_0,-\log{\epsilon})|e^{-\frac{1}{4}(s+\log\epsilon)}\\
&+C\big(\epsilon^\frac{1}{8}+(\log M)^{-1}\epsilon^\frac{1}{10}\big)\int_{-\log\epsilon}^se^{-\frac{1}{4}(s-s')}\,ds'\\
\leq& \epsilon^\frac{1}{8}+C\epsilon^\frac{1}{8}+4C(\log M)^{-1}\epsilon^\frac{1}{10}\leq \frac{3}{4}\epsilon^\frac{1}{10},
\end{align*}
where we use the initial data assumption \eqref{eq:initial4xtildeU}
and use either a small power of $\epsilon$ or $(\log M)^{-1}$ to absorb the constant. Since we improve \eqref{eq:4xtildeUnear0} by a factor of $3/4$, we can close the bootstrap.
\end{proof}

\begin{proposition}
For $|X|\leq l$,
\begin{gather*}
|\partial_X^j\widetilde{U}(X,s)|\leq(\epsilon^\frac{1}{8}+\log M\epsilon^\frac{1}{10})l^{4-j}\leq 2 \log M\epsilon^\frac{1}{10}l^{4-j},\quad j=0,1,2,3.
\end{gather*}
\end{proposition}

\begin{proof}
We first prove that $|\partial^3_X\widetilde{U}(0,s)|\leq\epsilon^\frac{1}{4}$. Plugging in $X=0$ into \eqref{eq:eqnxtildeU} with $n=3$ and using the constraint \eqref{eq:constraints}, we get
\begin{align*}
(\partial_s-\frac{4\dot{\tau}}{1-\dot{\tau}})\partial^3_X\widetilde{U}(0,s)+\frac{e^\frac{s}{2}(\kappa-\dot{\xi})}{1-\dot{\tau}}\partial^4_X\widetilde{U}(0,s)=\frac{e^{-s}}{1-\dot{\tau}}H[\partial^3_XU](0,s)+\frac{24\dot{\tau}}{1-\dot{\tau}}.
\end{align*}
Let $F_{\partial^3_X\tilde{U}(0)}$ denote the forcing on the right hand side. By Lemma \ref{lem:H[U]} and \eqref{eq:taubound}, we have
\begin{align*}
|F_{\partial^3_X\tilde{U}(0)}|&\lesssim e^{-s}M^\frac{5}{2}+e^{-\frac{3}{4} s}\leq e^{-\frac{2}{5}s}.
\end{align*}
where we used a small power of $e^{-s}$ to absorb the constant and $M$ factors. 

We then bound the transport term using Lemma \ref{lem:H[U]}, \eqref{eq:3xUat0} and \eqref{eq:4xtildeUnear0}
\begin{align*}
\Big|\frac{e^\frac{s}{2}(\kappa-\dot{\xi})}{1-\dot{\tau}}\partial^4_X\widetilde{U}(0,s)\Big|&=\frac{e^{-s}}{1-\dot{\tau}}\frac{|H[\partial^2_XU](0,s)|}{|\partial^3_XU(0,s)|}|\partial^4_X\widetilde{U}(0,s)|\\
&\lesssim e^{-s}M^\frac{3}{2}\epsilon^\frac{1}{10}\leq e^{-\frac{2}{5}s}.
\end{align*}
Again we used $\epsilon^\frac{1}{10}$ and a small power of $e^{-s}$ to absorb the constant and $M$ factor.

Finally, we bound the damping term using \eqref{eq:3xtildeUat0}
\begin{align*}
\Big|\frac{4\dot{\tau}}{1-\dot{\tau}}\partial^3_X\widetilde{U}(0,s)\Big|\lesssim e^{-\frac{3}{4} s}\leq e^{-\frac{2}{5}s}.
\end{align*}
Putting the forcing, transport and damping terms together, we have
\begin{align}
|\partial_s\partial_X^3\widetilde{U}(0,s)|\leq 3e^{-\frac{2}{5}s}.\label{eq:partials3X}
\end{align} 
Hence, by Fundamental Theorem of Calculus and \eqref{eq:initial3xtildeU}
\begin{align*}
|\partial_X^3\widetilde{U}(0,s)|&=\Big|\partial_X^3\widetilde{U}(0,-\log\epsilon)+\int_{-\log\epsilon}^s\partial_s\partial_X^3\widetilde{U}(0,s')\,ds'\Big|\\
&\leq |\partial_X^3\widetilde{U}(0,-\log\epsilon)|+\int_{\log\epsilon}^s 3e^{-\frac{2}{5}s'}\,ds'\\
&\leq \frac{1}{4}\epsilon^\frac{1}{4}+\frac{15}{2}\epsilon^\frac{2}{5}\leq \frac{1}{2}\epsilon^\frac{1}{4}.
\end{align*}
This closes the bootstrap of \eqref{eq:3xtildeUat0}. Then by Fundamental Theorem of Calculus, for $|X|\leq l$
\begin{align*}
|\partial_X^3\widetilde{U}(X,s)|&=\Big|\partial_X^3\widetilde{U}(0,s)+\int_0^X\partial_X^4\widetilde{U}(X',s)\,dX'\Big|\\
&\leq |\partial_X^3\widetilde{U}(0,s)|+\int_0^l|\partial_X^4\widetilde{U}(X',s)|\,dX'\\
&\leq \frac{1}{2}\epsilon^\frac{1}{4}+\epsilon^\frac{1}{10}l\\
&\leq \frac{1}{2}(\epsilon^\frac{1}{8}+\log M\epsilon^\frac{1}{10})l,
\end{align*}
We then use the constraints
$$\widetilde{U}(0,s)=\partial_X\widetilde{U}(0,s)=\partial^2_X\widetilde{U}(0,s)=0$$
and Fundamental Theorem of Calculus repeatedly, to get
\begin{align*}
|\partial^j_X\widetilde{U}(X,s)|&\leq |\partial^j_X\widetilde{U}(0,s)|+\int_0^l|\partial_X^{j+1}\widetilde{U}(X',s)|\,dX'\\
&\leq \frac{1}{2}(\epsilon^\frac{1}{8}+\log M\epsilon^\frac{1}{10})l^{4-j},\quad j=2,1,0.
\end{align*}
Therefore, we closed the bootstrap \eqref{eq:xtildeUnear0} for $j=0,1,2,3$.
\end{proof}

\subsection{Weighted estimates for $\widetilde{U}$, $\partial_X\widetilde{U}$, $\partial^2_XU$ when $l\leq |X|\leq \frac{1}{2}e^{\frac{3}{2}s}$}\label{sec:weightmiddle}
Suppose we have a transport equation for function $g(X,s)$
\begin{align*}
\partial_sg+D_g g+V\partial_X g=F_g
\end{align*}
where $D_g$ and $F_g$ denote the damping and forcing terms respectively. We consider $$G(X,s):=(1+X^2)^\mu g(X,s)$$ for some $\mu\in\mathbb{R}$. Then 
\begin{align}
\partial_sG+(\underbracket[0.55pt]{D_g-\frac{2\mu X}{1+X^2}V}_{:=D_G})G+V\partial_XG=\underbracket[0.55pt]{(1+X^2)^\mu F_g}_{:=F_G}.\label{eq:weighted}
\end{align}
Plugging in the expression of $V$ as in \eqref{eq:speed}, we get
\begin{align*}
D_G=D_g-\frac{3\mu X^2}{1+X^2}-\frac{2\mu X}{1+X^2}\frac{U+e^\frac{s}{2}(\kappa-\dot{\xi})}{1-\dot{\tau}}.
\end{align*}
Composing \eqref{eq:weighted} with Lagrangian trajectory $\Phi(X_0,s)$, we have
\begin{align*}
\frac{d}{ds}G\circ\Phi(X_0,s)&+(D_G)\circ \Phi(X_0,s)=F_G\circ\Phi(X_0,s)\\
G\circ \Phi(X_0,s)&=G(X_0,s_0)\exp{\Big(-\int_{s_0}^sD_G\circ\Phi(X_0,s')\,ds'\Big)}\\
&\qquad+\int_{s_0}^sF_G\circ \Phi(X_0,s')\exp{\Big(-\int_{s'}^sD_G\circ\Phi(X_0,s'')\,ds''\Big)}\,ds'.
\end{align*}
Therefore,
\begin{equation}
\begin{aligned}
|G\circ \Phi(X_0,s)|&\leq |G(X_0,s_0)|\exp{\Big(-\int_{s_0}^sD_G\circ\Phi(X_0,s')\,ds'\Big)}\\
&\qquad+\int_{s_0}^s|F_G\circ \Phi(X_0,s')|\exp{\Big(-\int_{s'}^sD_G\circ\Phi(X_0,s'')\,ds''\Big)}\,ds'.
\end{aligned}\label{eq:gronwall}
\end{equation}
To conclude the proof, we need the size of $G(X_0,s_0)$. Since by Lemma \ref{lem:lowerLagrangian}, Lagrangian trajectories escape exponentially fast once $|X_0|\geq l$, for any $s>-\log\epsilon$ and any $l\leq |X|\leq \frac{1}{2}e^{\frac{3}{2}s}$, there exists $s_0\in [-\log\epsilon,s)$ and $l \leq |X_0|\leq \frac{1}{2}e^{\frac{3}{2}s_0}$ such that $X=\Phi(X_0,s)$. When $s_0=-\log\epsilon$, we can use the initial data assumptions. On the other hand, when $s_0>-\log\epsilon$, $s_0$ is the first time that the trajectory enters the range $l\leq |X|\leq \frac{1}{2}e^{\frac{3}{2}s}$, so we must have $|X_0|=l$, hence we can use the bootstrap assumptions on $\partial^j_X\widetilde{U}$ for $|X|\leq l$ (in fact, at $|X|=l$).

\begin{proposition}[$\widetilde{U}$]
For any $(X,s)$ such that $l\leq |X|\leq \frac{1}{2}e^{\frac{3}{2}s}$,
\begin{align*}
|\widetilde{U}(X,s)|\leq \epsilon^\frac{1}{11}(1+X^2)^\frac{1}{6}.
\end{align*}
\end{proposition}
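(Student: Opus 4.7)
The plan is to apply the weighted transport framework of Section \ref{sec:weightmiddle} with exponent $\mu = -\frac{1}{6}$: setting
\[
G(X,s) := (1+X^2)^{-\frac{1}{6}}\,\widetilde{U}(X,s),
\]
it suffices to prove $|G| \leq \frac{1}{2}\epsilon^{\frac{1}{11}}$ along every Lagrangian trajectory. From \eqref{eq:eqtildeU} and \eqref{eq:speed}, the weighted damping simplifies to
\[
D_G = -\frac{1}{2(1+X^2)} + \frac{\partial_X\overline{U}}{1-\dot{\tau}} + \frac{X\bigl(U+e^{s/2}(\kappa-\dot{\xi})\bigr)}{3(1+X^2)(1-\dot{\tau})},
\]
where the leading term $-1/[2(1+X^2)]$ arises from the exact cancellation between $-\frac{1}{2}$ and the weight contribution $X^2/[2(1+X^2)]$ coming from the $\frac{3}{2}X$-piece of $V$; this cancellation is precisely why the exponent $\mu = -\frac{1}{6}$ is the natural one. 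Using \eqref{eq:barUall}, the pointwise bound \eqref{eq:Umiddle} for $U$, Lemma \ref{lem:1-dottau}, and the smallness of $e^{s/2}(\kappa-\dot{\xi})$ given by \eqref{eq:kappaxi} and Lemma \ref{lem:H[partialjU]}, I would then read off
\[
-D_G(X,s) \leq \frac{1}{2(1+X^2)} + C(1+X^2)^{-\frac{1}{3}} + CM^{\frac{3}{2}}e^{-s}.
\]

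The main technical step is to integrate this anti-damping along a Lagrangian trajectory. Combined with the exponential escape $|\Phi(X_0,s')| \geq |X_0|\,e^{(s'-s_0)/5} \geq l\,e^{(s'-s_0)/5}$ from Lemma \ref{lem:lowerLagrangian} and splitting the $s'$-integral at the moment when $|\Phi|$ crosses the unit scale (which happens after elapsed self-similar time $5\log(1/l) = 10\log\log M$), the terms $1/[2(1+\Phi^2)]$ and $(1+\Phi^2)^{-1/3}$ each contribute at most $O(\log\log M)$, and the $e^{-s'}$ term contributes $O(M^{3/2}\epsilon)$. Hence
\[
\exp\!\Bigl(-\int_{s_0}^{s}D_G\circ\Phi(X_0,s')\,ds'\Bigr) \leq C(\log M)^{C}.
\]
For the remaining ingredients of \eqref{eq:gronwall}, the initial size $|G(X_0,s_0)|$ is $\leq \epsilon^{\frac{1}{10}}$ via \eqref{eq:inittildeUmiddle} when $s_0 = -\log\epsilon$, and $\leq 2\log M\,\epsilon^{\frac{1}{10}}$ via the near-$0$ bootstrap \eqref{eq:tildeUnear0} evaluated at $|X_0|=l$ when the trajectory first enters the middle range at a later time $s_0 > -\log\epsilon$. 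The weighted forcing satisfies
\[
|F_G| = (1+X^2)^{-\frac{1}{6}}|F_{\widetilde{U}}| \lesssim Me^{-s/4}
\]
by Lemma \ref{lem:forcemiddle}, so its time integral along $\Phi$ is $\lesssim M\epsilon^{\frac{1}{4}}$.

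Plugging everything into \eqref{eq:gronwall} yields
\[
|G(X,s)| \leq C(\log M)^{C}\bigl(\log M\,\epsilon^{\frac{1}{10}} + M\epsilon^{\frac{1}{4}}\bigr).
\]
Since $\frac{1}{10}-\frac{1}{11} = \frac{1}{110}$ and $\frac{1}{4}-\frac{1}{11} = \frac{7}{44}$ are both strictly positive, choosing $\epsilon = \epsilon(M)$ small enough makes the right-hand side $\leq \frac{1}{2}\epsilon^{\frac{1}{11}}$, strictly improving \eqref{eq:tildeUmiddle} and closing the bootstrap. The subtle point I expect to have to watch is the $(\log M)^{O(1)}$ prefactor produced by the near-$l$ portion of the damping integral; this blow-up is unavoidable because $l = (\log M)^{-2}$ and the anti-damping is of order one there, and it is only absorbed by the exponent gap $\epsilon^{1/110}$ because $M$ is fixed before $\epsilon$ is chosen in terms of $M$.
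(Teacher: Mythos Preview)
Your proposal is correct and follows essentially the same approach as the paper: you set up the weighted quantity $G=(1+X^2)^{-1/6}\widetilde U$, bound the anti-damping $-D_G$ by an integrable-along-trajectory function using \eqref{eq:barUall}, \eqref{eq:Umiddle}, and Lemma~\ref{lem:lowerLagrangian}, and then feed the forcing bound from Lemma~\ref{lem:forcemiddle} and the initial/near-$0$ data into \eqref{eq:gronwall}, absorbing the resulting $(\log M)^{O(1)}$ factors with the exponent gap $\epsilon^{1/10-1/11}$. The only cosmetic difference is that the paper combines all anti-damping terms into a single $3(1+X^2)^{-1/3}$ bound and tracks the explicit constant $l^{-30}=(\log M)^{60}$, whereas you keep the pieces separate; both routes close identically.
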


\begin{proof}
Here $g=\widetilde{U}$. We consider \eqref{eq:eqtildeU}, then $D_{\widetilde{U}}=-\frac{1}{2}+\frac{\partial_X\overline{U}}{1-\dot{\tau}}$, $F_g=F_{\tilde{U}}$. We set $\mu=-\frac{1}{6}$ so $G=(1+X^2)^{-1/6}\widetilde{U}$, and we use \eqref{eq:kappaxi} to rewrite $D_G$ into
\begin{align*}
D_G(X,s)&=-\frac{1}{2}+\frac{\partial_X\overline{U}(X)}{1-\dot{\tau}}+\frac{X^2}{2(1+X^2)}+\frac{1}{1-\dot{\tau}}\frac{X}{3(1+X^2)}\Big(U(X,s)+e^{-s}\frac{H[\partial^2_XU](0,s)}{\partial^3_XU(0,s)}\Big).
\end{align*} 
We note that, by \eqref{eq:1xbarUmiddle} and Lemma \ref{lem:1-dottau} (the first three terms of $D_G$ with a minus sign)
\begin{align*}
\frac{1}{2}-\frac{\partial_X\overline{U}(X)}{1-\dot{\tau}}-\frac{X^2}{2(1+X^2)}&=\frac{1}{2(1+X^2)}-\frac{\partial_X\overline{U}(X)}{1-\dot{\tau}}\\
&\leq\frac{1}{2(1+X^2)}+(1+2\epsilon^\frac{3}{4})(1-2l^2)(1+X^2)^{-\frac{1}{3}}\\
&\leq (\frac{1}{2}+1+2\epsilon^\frac{3}{4})(1+X^2)^{-\frac{1}{3}}\\
&\leq 2(1+X^2)^{-\frac{1}{3}},
\end{align*}
and that by \eqref{eq:Umiddle},  \eqref{eq:3xUat0}, Lemma \ref{lem:1-dottau} and \ref{lem:H[partialjU]},
the last term of $D_G$ is bounded by
\begin{align*}
&\Big|\frac{1}{1-\dot{\tau}}\frac{X}{3(1+X^2)}\Big(U(X,s)+e^{-s}\frac{H[\partial^2_XU](0,s)}{\partial^3_XU(0,s)}\Big)\Big|\\
\leq& \frac{1}{3}(1+2\epsilon^\frac{3}{4})(1+X^2)^{-\frac{1}{2}}\Big((1+\epsilon^\frac{1}{11})(1+X^2)^\frac{1}{6}+CM^\frac{3}{2}e^{-s}\Big)\\
\leq& \frac{1}{3}(1+2\epsilon^\frac{3}{4})(1+\epsilon^\frac{1}{11})(1+X^2)^{-\frac{1}{3}}+CM^\frac{3}{2}e^{-s}(1+X^2)^{-\frac{1}{2}}\\
\leq& (1+X^2)^{-\frac{1}{3}}.
\end{align*}
where we use the $e^{-s}$ to absorb the constant and $M$ factor. So 
\begin{align*}
-D_G(X,s)\leq 3(1+X^2)^{-\frac{1}{3}}.
\end{align*}
By Lemma \ref{lem:forcemiddle}, we  have
\begin{align*}
|F_G(X,s)|=(1+X^2)^{-\frac{1}{6}}|F_{\tilde{U}}(X,s)|\lesssim Me^{-\frac{1}{4}s}(1+X^2)^{-\frac{1}{6}}.
\end{align*}
Plugging $D_G$ and $F_G$ into \eqref{eq:gronwall}, and use Lemma \ref{lem:lowerLagrangian}, we have
\begin{align*}
|(1+X^2)^{-\frac{1}{6}}\widetilde{U}\circ\Phi(X_0,s)|&\leq (1+X_0^2)^{-\frac{1}{6}}|\widetilde{U}(X_0,s_0)|\exp{\Big(\int_{s_0}^s 3\big(1+X_0^2e^{\frac{2}{5}(s'-s_0)}\big)^{-\frac{1}{3}}ds'\Big)}\\
&+CM\int_{s_0}^s e^{-\frac{1}{4}s'}\big(1+X_0^2e^{\frac{2}{5}(s'-s_0)}\big)^{-\frac{1}{6}}\exp{\Big(\int_{s'}^s 3\big(1+X_0^2e^{\frac{2}{5}(s''-s_0)}\big)^{-\frac{1}{3}}ds'' \Big)}ds'.
\end{align*}
Since $|X_0|\geq l$, we have
\begin{equation}
\begin{aligned}
\int_{s_0}^s \big(1+X_0^2e^{\frac{2}{5}(s'-s_0)}\big)^{-\frac{1}{3}}ds'&\leq \int_{s_0}^s \big(1+l^2e^{\frac{2}{5}(s'-s_0)}\big)^{-\frac{1}{3}}ds'\\
&\leq \int_{s_0}^{s_0+5\log{\frac{1}{l}}}1\,ds'+\int_{s_0+5\log{\frac{1}{l}}}^s l^{-\frac{2}{3}}e^{-\frac{2}{15}(s-s_0)}\,ds'\\
&\leq 5\log{\frac{1}{l}}+\frac{15}{2}\leq 10\log\frac{1}{l},\label{eq:logl}
\end{aligned}
\end{equation}
if $M$ is sufficiently large. Hence, we can bound the contribution from the forcing term by
\begin{equation*}
\begin{aligned}
&CM\int_{s_0}^s e^{-\frac{1}{4}s'}\big(1+X_0^2e^{\frac{2}{5}(s'-s_0)}\big)^{-\frac{1}{6}}\exp{\Big(\int_{s'}^s 3\big(1+X_0^2e^{\frac{2}{5}(s''-s_0)}\big)^{-\frac{1}{3}}ds'' \Big)}ds'\\
\leq &CM\int_{s_0}^s e^{-\frac{1}{4}s'}\big(1+l^2e^{\frac{2}{5}(s'-s_0)}\big)^{-\frac{1}{6}}l^{-30}\,ds'\\
\leq &CM \epsilon^\frac{1}{4}l^{-30}\int_{s_0}^s\big(1+l^2e^{\frac{2}{5}(s'-s_0)}\big)^{-\frac{1}{6}}\,ds'\\
\leq &CM\epsilon^\frac{1}{4}l^{-30}\log{\frac{1}{l}}.
\end{aligned}
\end{equation*}
Therefore,
\begin{align*}
|(1+X^2)^{-\frac{1}{6}}\widetilde{U}\circ\Phi(X_0,s)|\leq l^{-30}(1+X_0^2)^{-\frac{1}{6}}|\widetilde{U}(X_0,s_0)|+CM\epsilon^\frac{1}{4}l^{-30}\log{\frac{1}{l}}.
\end{align*}
By the discussion after \eqref{eq:gronwall}, either $s_0=-\log\epsilon,\ l\leq |X_0|\leq \frac{1}{2}\epsilon^{-\frac{3}{2}}$, or $s_0>-\log\epsilon,\ X_0=l$.
In the first case, we use the initial data assumption \eqref{eq:inittildeUmiddle}  at $|X_0|=l$ to get
\begin{align*}
|(1+X^2)^{-\frac{1}{6}}\widetilde{U}\circ\Phi(X_0,s)|\leq l^{-30}\epsilon^\frac{1}{10}+CM\epsilon^\frac{1}{4}l^{-30}\log\frac{1}{l},
\end{align*}
while in the second case, we use the assumption \eqref{eq:xtildeUnear0} with $j=0$ to get
\begin{align*}
|(1+X^2)\widetilde{U}\circ\Phi(l,s)|\leq 2l^{-30}\log M\epsilon^\frac{1}{10}l^4+CM\epsilon^\frac{1}{4}l^{-30}\log\frac{1}{l}.
\end{align*}
In either case, by using a small power of $\epsilon$ to absorb all the $M$ and $l$ factors, we can show that
\begin{align*}
|(1+X^2)^{-\frac{1}{6}}\widetilde{U}(X,s)|&\leq \frac{1}{2}\epsilon^\frac{1}{11}.
\end{align*}
So we can improve the bootstrap assumption \eqref{eq:tildeUmiddle} by a factor $1/2$, thus we can close the bootstrap.
\end{proof}

\begin{proposition}[$\partial_X\widetilde{U}$]
For any $(X,s)$ such that $l\leq |X|\leq \frac{1}{2}e^{\frac{3}{2}s}$,
\begin{align*}
|\partial_X\widetilde{U}(X,s)|&\leq \epsilon^\frac{1}{12}(1+X^2)^{-\frac{1}{3}}.
\end{align*}\label{prop:1xmiddleclosure}
\end{proposition}
\begin{proof}
The proof is very similar to the previous one for $\widetilde{U}$, so we omit some details. We consider \eqref{eq:eq1xtildeU}.  Here $g=\partial_X\widetilde{U}$, $D_{\partial_X\tilde{U}}=1+\frac{2\partial_X\overline{U}+\partial_X\widetilde{U}}{1-\dot{\tau}}$, and $F_g=F_{\partial_X\tilde{U}}$. We choose $\mu=\frac{1}{3}$. So $G=(1+X^2)^\frac{1}{3}\partial_X\widetilde{U}$, and
\begin{align*}
D_G(X,s)&=1+\frac{2\partial_X\overline{U}(X)+\partial_X\widetilde{U}(X,s)}{1-\dot{\tau}}-\frac{X^2}{1+X^2}-\frac{2X}{3(1+X^2)}\frac{1}{1-\dot{\tau}}\Big(U(X,s)+\frac{e^{-s}H[\partial^2_XU](0,s)}{\partial^3_XU(0,s)}\Big)\\
&\geq -6(1+X^2)^{-\frac{1}{3}}
\end{align*}
by \eqref{eq:barUall}, \eqref{eq:1xUmiddle}, Lemma \ref{lem:1-dottau}, \eqref{eq:Umiddle}, Lemma \ref{lem:H[U]}, and we use the $e^{-s}$ to absorb the constant and $M$ factor.
By Lemma \ref{lem:forcemiddle}, we have
\begin{align*}
|F_G(X,s)|=(1+X^2)^{\frac{1}{3}}|F_{\partial_X\tilde{U}}(X,s)|\lesssim e^{-s}\big(M^\frac{1}{4}+\log(1+X^2)\big)+e^{-\frac{3}{4}s}+\epsilon^\frac{1}{11}(1+X^2)^{-\frac{1}{3}}.
\end{align*}
By \eqref{eq:logl}, we have
\begin{align*}
\int_{s_0}^s-D_G\circ\Phi(X_0,s')\,ds\leq 60\log\frac{1}{l}.
\end{align*}
To bound the contribution of the forcing term, it remains to bound $\int_{s_0}^s |F_G\circ\Phi(X_0,s')|\,ds'$. By Lemma \ref{lem:lowerLagrangian} and Lemma \ref{lem:upperLagrangian},
\begin{align*}
\int_{s_0}^s |F_G\circ\Phi(X_0,s')|\,ds'&\lesssim \int_{s_0}^s M^\frac{1}{4}e^{-s'}+e^{-s'}\big[2\log(|X_0|+\frac{7}{2}Me^{\frac{1}{2}s_0})+3(s'-s_0)\big]\\
&\qquad+e^{-\frac{3}{4}s'}+\epsilon^\frac{1}{11}\big(1+X_0^2e^{\frac{2}{5}(s-s_0)}\big)^{-\frac{1}{3}}\,ds'\\
(\text{since }|X_0|\leq \frac{1}{2}e^{\frac{3}{2}s_0})\qquad&\lesssim M^\frac{1}{4}e^{-s_0}+\int_{s_0}^s e^{-s'}\log\big(\frac{1}{2}e^{\frac{3}{2}s_0}+\frac{7}{2}Me^{\frac{1}{2}s_0}\big)\,ds'+\int_{s_0}^s e^{-s'}s'\,ds'\\
&\qquad+\int_{s_0}^se^{-\frac{3}{4}s'}\,ds'+\epsilon^\frac{1}{11}\int_{s_0}^s\big(1+X_0^2e^{\frac{2}{5}(s-s_0)}\big)^{-\frac{1}{3}}\,ds'\\
(\text{log term}\lesssim \log(4Me^{\frac{3}{2}s_0}))\qquad&\lesssim M^\frac{1}{4}e^{-s_0}+s_0e^{-s_0}\log{M}+(1+s_0)e^{-s_0}+e^{-\frac{3}{4}s_0}+\epsilon^\frac{1}{11}\log\frac{1}{l}\\
(s_0\geq-\log\epsilon\text{ large enough})\qquad  &\lesssim \epsilon^\frac{1}{11}\log\frac{1}{l}.
\end{align*}
Hence, using \eqref{eq:gronwall}, we have
\begin{align*}
    |(1+X^2)^\frac{1}{3}\partial_X\widetilde{U}\circ\Phi(X_0,s)|&\leq (1+X_0^2)^\frac{1}{3}|\partial_X\widetilde{U}(X_0,s_0)|l^{-60}+C\epsilon^\frac{1}{11}l^{-60}\log\frac{1}{l}.
\end{align*}
If $l\leq X_0\leq \frac{1}{2}\epsilon^{-\frac{3}{2}},\ s_0=-\log\epsilon$, we use the initial data assumption \eqref{eq:init1xtildeUmiddle} to obtain
\begin{align*}
|(1+X^2)^\frac{1}{3}\partial_X\widetilde{U}\circ\Phi(X_0,s)|\leq \epsilon^\frac{1}{11}l^{-60}+C\epsilon^\frac{1}{11}l^{-60}\log\frac{1}{l}.
\end{align*}
Otherwise, $X_0=l$ for some $s_0>-\log\epsilon$, and we use \eqref{eq:xtildeUnear0} with $j=1$ to obtain
\begin{align*}
|(1+X^2)^\frac{1}{3}\partial_X\widetilde{U}\circ\Phi(X_0,s)|\leq 
2\log M\epsilon^\frac{1}{10}l^3(1+l^2)^\frac{1}{3}l^{-60}+C\epsilon^\frac{1}{11}l^{-60}\log\frac{1}{l}.
\end{align*}
In either case, by using a small power of $\epsilon$ to absorb the constant and $M$ factor, we can show that
\begin{align*}
|(1+X^2)^\frac{1}{3}\partial_X\widetilde{U}(X,s)|&\leq \frac{1}{2}\epsilon^\frac{1}{12}.
\end{align*}
Thus we close the bootstrap assumption \eqref{eq:1xtildeUmiddle}.
\end{proof}

\begin{proposition}[$\partial_X^2U$]
For any $(X,s)$ such that $l\leq |X|\leq \frac{1}{2}e^{\frac{3}{2}s}$,
\begin{align*}
|\partial_X^2U(X,s)|\leq M^\frac{1}{4}(1+X^2)^{-\frac{1}{3}}.
\end{align*}
\end{proposition}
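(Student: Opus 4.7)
The plan is to apply the weighted-estimate machinery of Section \ref{sec:weightmiddle} directly to equation \eqref{eq:2partialU} with $g = \partial_X^2U$ and weight exponent $\mu = \frac{1}{3}$, so that $G := (1+X^2)^{1/3}\partial_X^2U$ and I aim to improve \eqref{eq:2xUmiddle} by a factor $1/2$. Since $D_{\partial_X^2U} = \frac{5}{2} + \frac{3\partial_XU}{1-\dot\tau}$, inserting \eqref{eq:speed} and \eqref{eq:kappaxi} gives
\begin{align*}
D_G(X,s) = \frac{5}{2} + \frac{3\partial_XU}{1-\dot\tau} - \frac{X^2}{1+X^2} - \frac{2X}{3(1+X^2)(1-\dot\tau)}\Big(U(X,s) + e^{-s}\frac{H[\partial_X^2U](0,s)}{\partial_X^3U(0,s)}\Big).
\end{align*}
Combining \eqref{eq:1xUmiddle}, \eqref{eq:Umiddle}, \eqref{eq:3xUat0}, Lemma \ref{lem:H[partialjU]} and Lemma \ref{lem:1-dottau}, I expect to bound $-D_G(X,s) \leq -\frac{3}{2} + C(1+X^2)^{-1/3} + CM^{3/2}e^{-s}(1+X^2)^{-1/2}$. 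The crucial feature, absent in Proposition \ref{prop:1xmiddleclosure}, is the genuine damping constant $-\frac{3}{2}$ coming from the $\frac{5}{2}$ in $D_{\partial_X^2U}$; this will supply an $e^{-3(s-s_0)/2}$ decay factor in the Gr\"onwall kernel.

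For the forcing, Lemma \ref{lem:forcemiddle} yields
\begin{align*}
|F_G(X,s)| = (1+X^2)^{1/3}|F_{\partial_X^2U}(X,s)| \lesssim e^{-s}\big(M^{3/4} + M^{1/4}\log(1+X^2)\big) + Me^{-3s/4}.
\end{align*}
Composing with the Lagrangian trajectory $\Phi(X_0,s)$, I use Lemma \ref{lem:lowerLagrangian} and the computation \eqref{eq:logl} to conclude
\begin{align*}
-\int_{s_0}^s D_G\circ\Phi(X_0,s')\,ds' \leq -\tfrac{3}{2}(s-s_0) + 30\log\tfrac{1}{l} + O(1),
\end{align*}
so the kernel appearing in \eqref{eq:gronwall} is controlled by $l^{-30}e^{-3(s-s_0)/2}$. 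Using Lemma \ref{lem:upperLagrangian} to estimate $\log(1+\Phi(X_0,s')^2)\lesssim \log M + (s'-s_0) + \log(1+X_0^2)$ along trajectories, the forcing integral evaluates to a bound of order $l^{-30}\big(M^{3/4}e^{-s_0} + M^{1/4}e^{-s_0}(\log M + s_0) + Me^{-3s_0/4}\big)$, which is $\ll M^{1/4}/4$ once $\epsilon$ is chosen small enough in terms of $M$, noting $s_0 \geq -\log\epsilon$ and $l^{-30} = (\log M)^{60}$.

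For the initial datum $G(X_0,s_0)$, I split as in Section \ref{sec:weightmiddle}. If $s_0 = -\log\epsilon$ and $l \leq |X_0| \leq \frac{1}{2}\epsilon^{-3/2}$, then \eqref{eq:init2xUmiddle} gives $(1+X_0^2)^{1/3}|\partial_X^2U(X_0,s_0)| \leq 2$. If $s_0 > -\log\epsilon$ and $|X_0| = l$, I decompose $\partial_X^2U = \partial_X^2\widetilde{U} + \partial_X^2\overline{U}$; by \eqref{eq:2xtildeUnear0} and the Taylor expansion at $X=0$ in \eqref{eq:barUat0} giving $\partial_X^2\overline{U}(X) = 6X + O(X^3)$, we obtain $|\partial_X^2U(l,s_0)| \leq 6l + 2\log M\,\epsilon^{1/10}l^2 \lesssim l$, so $(1+l^2)^{1/3}|\partial_X^2U(l,s_0)| \lesssim l$. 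In both cases multiplication by $l^{-30}e^{-3(s-s_0)/2} \leq l^{-30}$ yields a bound $\ll M^{1/4}/2$ since $(\log M)^{60}$ is easily absorbed by $M^{1/4}$ for $M$ sufficiently large. Combining both contributions closes the bootstrap \eqref{eq:2xUmiddle}. The main accounting difficulty is handling the $\log(1+X^2)$ coming from the Hilbert transform inside $F_{\partial_X^2U}$, which along trajectories grows linearly in $s'-s_0$; however, the strong $-3/2$ damping inherited from the constant part of $D_{\partial_X^2U}$ effortlessly dominates this polynomial growth.
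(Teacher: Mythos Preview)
Your proof is correct and follows essentially the same route as the paper: apply the weighted Gr\"onwall machinery of Section~\ref{sec:weightmiddle} to \eqref{eq:2partialU} with $\mu=\tfrac{1}{3}$, exploit the genuine damping constant $\tfrac{3}{2}$ in $D_G$, bound the forcing via Lemma~\ref{lem:forcemiddle}, and split the initial datum into the two cases $s_0=-\log\epsilon$ versus $|X_0|=l$. The only cosmetic differences are that the paper tracks the constant $6$ in front of $(1+X^2)^{-1/3}$ (leading to $60\log\tfrac{1}{l}$ rather than your $30\log\tfrac{1}{l}$), and in the $|X_0|=l$ case you extract the sharper bound $|\partial_X^2\overline{U}(l)|\lesssim l$ from the Taylor expansion, whereas the paper is content with the cruder $|\partial_X^2\overline{U}(l)|\leq 1$ from \eqref{eq:barUall}; neither refinement is needed since any power of $\log M$ is absorbed by $M^{1/4}$.
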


\begin{proof}
Again we will skip some intermediate steps. We consider \eqref{eq:eqnxtildeU} with $n=2$. Here $g=\partial_X^2U$, $D_{\partial_X^2U}=\frac{5}{2}+\frac{3\partial_XU}{1-\dot{\tau}}$, and $F_g=F_{\partial_X^2U}$. We set $\mu=\frac{1}{3}$, so $G=(1+X^2)^\frac{1}{3}\partial_X^2U$, and
\begin{align*}
D_G(X,s)
&=\frac{3}{2}+\frac{1}{1+X^2}+\frac{3\partial_XU(X,s)}{1-\dot{\tau}}-\frac{2X}{3(1+X^2)}\frac{1}{1-\dot{\tau}}\Big(U(X,s)+\frac{e^{-s}H[\partial^2_XU](0,s)}{\partial^3_XU(0,s)}\Big)\\
&\geq\frac{3}{2}-6(1+X^2)^{-\frac{1}{3}},
\end{align*}
by \eqref{eq:1xUmiddle}, \eqref{eq:Umiddle}, \eqref{eq:3xUat0}, Lemma \ref{lem:H[U]}, and using $e^{-s}$ to absorb implicit constants and powers of $M$. By  \eqref{eq:logl}, we get
\begin{align*}
\int_{s_0}^s-D_G\circ\Phi(X_0,s')\,ds'\leq -\frac{3}{2}(s-s_0)+60\log\frac{1}{l}.
\end{align*}
By Lemma \ref{lem:forcemiddle}, 
\begin{align*}
|F_G(X,s)|=(1+X^2)^\frac{1}{3}|F_{\partial_X^2U}(X,s)|\lesssim e^{-s}\big(M^\frac{3}{4}+M^\frac{1}{4}\log(1+X^2)\big)+Me^{-\frac{3}{4}s}.
\end{align*}
Using a similar argument as the forcing contribution in the proof of $\partial_X\widetilde{U}$, we have
\begin{align*}
\int_{s_0}^s|F_G\circ\Phi(X_0,s')|\,ds'
\lesssim M\epsilon^\frac{3}{4}.
\end{align*}
Hence, by \eqref{eq:gronwall}, we have
\begin{align*}
|(1+X^2)^\frac{1}{3}\partial^2_XU\circ\Phi(X_0,s)|
&\leq (1+X_0^2)^\frac{1}{3}|\partial_X^2U(X_0,s_0)|e^{-\frac{3}{2}(s-s_0)}l^{-60}+CM\epsilon^\frac{3}{4}l^{-60}.
\end{align*}
If $l\leq|X_0|\leq \frac{1}{2}\epsilon^{-\frac{3}{2}},\ s_0=-\log\epsilon$, then we use the initial data assumption \eqref{eq:init2xUmiddle} to get
\begin{align*}
|(1+X^2)^\frac{1}{3}\partial^2_XU\circ\Phi(X_0,s)|&\leq 2e^{-\frac{3}{2}(s+\log\epsilon)}l^{-60}+CM\epsilon^\frac{3}{4}l^{-60}.
\end{align*}
Otherwise, $X_0=l$ for some $s_0>-\log\epsilon$, so we can use \eqref{eq:xtildeUnear0} with $j=2$ and \eqref{eq:barUall} to obtain
\begin{align*}
|(1+X^2)^\frac{1}{3}\partial^2_XU\circ\Phi(X_0,s)|&\leq (1+l^2)^\frac{1}{3}\big(2\log M\epsilon^\frac{1}{10}l^2+1\big)e^{-\frac{3}{2}(s-s_0)}l^{-60}+CM\epsilon^\frac{3}{4}l^{-60}.
\end{align*}
In either case, recall that $l=(\log M)^{-2}$, we can show that
\begin{align*}
(1+X^2)^\frac{1}{3}|\partial_X^2U(X,s)|&\leq 2l^{-60}+CM\epsilon^\frac{3}{4}l^{-60}\leq \frac{3}{4}M^\frac{1}{4},
\end{align*}
if we take $M$ sufficiently large so that $(\log M)^{120}\leq \frac{3}{16}M^\frac{1}{4}$, and $\epsilon$ sufficiently small to absorb the $CM$ factor. This closes \eqref{eq:2xUmiddle}.
\end{proof}

\subsection{$\partial^3_XU$ for $l\leq |X|\leq \frac{1}{2}e^{\frac{3}{2}s}$}
\begin{proposition}
For any $(X,s)$ such that $l\leq|X|\leq\frac{1}{2}e^{\frac{3}{2}s}$,
\begin{align*}
|\partial^3_XU(X,s)|\leq \frac{1}{2}M^\frac{3}{4}.
\end{align*}
\end{proposition}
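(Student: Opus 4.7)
The plan is to apply the unweighted Lagrangian trajectory and Grönwall scheme from Section \ref{sec:weightmiddle} to equation \eqref{eq:3partialU} with $g = \partial_X^3 U$ and trivial weight $\mu = 0$. The damping is then $D = 4 + \frac{4\partial_X U}{1-\dot{\tau}}$, and the forcing has already been estimated in Lemma \ref{lem:forcemiddle} as $|F_{\partial_X^3U}(X,s)| \lesssim e^{-s} M^{5/2} + M^{1/2}(1+X^2)^{-2/3}$ on the middle range.

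The main obstacle is that $D$ is not uniformly positive: near $X=0$ the damping collapses because $\partial_X U \to -1$. First I would use \eqref{eq:1xUmiddle} to get the pointwise bound $\partial_X U(X,s) \geq -(1+X^2)^{-1/3}$ for $|X|\geq l$, hence $D(X,s) \geq 4 - 4(1+2\epsilon^{3/4})(1+X^2)^{-1/3}$. Along any Lagrangian trajectory $\Phi(X_0,\cdot)$ starting from $|X_0|\geq l$, the exponential escape in Lemma \ref{lem:lowerLagrangian} together with the same computation as in \eqref{eq:logl} yields $\int_{s'}^s (1+\Phi(X_0,s'')^2)^{-1/3}\,ds'' \leq 10\log\frac{1}{l}$, and therefore
\begin{align*}
\exp\!\Big(-\!\int_{s'}^s D\circ\Phi(X_0,s'')\,ds''\Big) \leq l^{-40}\, e^{-4(s-s')}.
\end{align*}

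Plugging this into the formula \eqref{eq:gronwall} and composing the forcing with $\Phi$, the forcing contribution is bounded by
\begin{align*}
l^{-40}\!\int_{s_0}^s e^{-4(s-s')}\Big(M^{5/2}e^{-s'} + M^{1/2}\big(1+\Phi(X_0,s')^2\big)^{-2/3}\Big)\,ds' \lesssim l^{-40}\big(M^{5/2}e^{-s_0} + M^{1/2}\big) \lesssim l^{-40}M^{1/2},
\end{align*}
after absorbing $M^{5/2}$ via $e^{-s_0}\leq\epsilon$. For the initial-data term I would split into the two standard cases as in the previous propositions: if $s_0 = -\log\epsilon$ and $l\leq |X_0|\leq \frac{1}{2}\epsilon^{-3/2}$, then \eqref{eq:init3xmiddle} gives $|\partial_X^3 U(X_0,s_0)|\leq M^{1/2}$; if instead $s_0 > -\log\epsilon$ then $|X_0|=l$ and combining \eqref{eq:3xtildeUnear0} with \eqref{eq:barUnear0} gives $|\partial_X^3 U(l,s_0)|\leq 7$. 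Either way the initial contribution is at most $l^{-40}M^{1/2} e^{-4(s-s_0)} \leq l^{-40} M^{1/2}$.

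Summing the two contributions gives $|\partial_X^3 U(X,s)| \leq C\, l^{-40} M^{1/2}$. Since $l = (\log M)^{-2}$, we have $l^{-40} = (\log M)^{80}$, and for $M$ sufficiently large $(\log M)^{80} \leq \frac{1}{4C}M^{1/4}$, which yields $|\partial_X^3 U(X,s)| \leq \frac{1}{2}M^{3/4}$ and closes \eqref{eq:3xUmiddle}. The only real delicacy is the polynomial-in-$\log M$ price paid for the loss of uniform damping, which is ultimately affordable thanks to the improvement from $M^{1/2}$ (in the initial data and in the forcing from $(\partial_X^2 U)^2$) to the desired $M^{3/4}$.
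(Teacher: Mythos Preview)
Your proof is correct and follows essentially the same approach as the paper: apply the unweighted Gr\"onwall scheme of Section~\ref{sec:weightmiddle} to \eqref{eq:3partialU}, bound the damping via \eqref{eq:1xUmiddle} and the integral \eqref{eq:logl}, use Lemma~\ref{lem:forcemiddle} for the forcing, and split the initial data into the two standard cases. The only differences are bookkeeping: you retain the factor $e^{-4(s-s')}$ inside the forcing integral (so the $(1+\Phi^2)^{-2/3}$ term integrates to $O(1)$ rather than the paper's $O(\log\tfrac{1}{l})$), and your damping constant yields $l^{-40}$ instead of the paper's looser $l^{-60}$; both are harmless since any polylogarithmic loss in $M$ is absorbed by the $M^{1/4}$ gap between $M^{1/2}$ and $M^{3/4}$.
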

\begin{proof}
We consider \eqref{eq:npartialU} with $n=3$. By Lemma \ref{lem:1-dottau} and \eqref{eq:1xUmiddle}, we have 
\begin{align*}
D_{\partial^3_XU}(X,s)=4+\frac{4\partial_XU}{1-\dot{\tau}}&\geq 4-4(1+2\epsilon^\frac{3}{4})(1+X^2)^{-\frac{1}{3}}\geq 4-6(1+X^2)^{-\frac{1}{3}}.
\end{align*}
By \eqref{eq:logl}, we get
\begin{align*}
\int_{s_0}^s-D_{\partial_X^3U}\circ\Phi(X_0,s')\,ds'\leq -4(s-s_0)+60\log\frac{1}{l}.
\end{align*}
From the bound  for $F_{\partial_X^3U}$ in Lemma \ref{lem:forcemiddle}, and a similar argument as in \eqref{eq:logl} with a better decay exponent, we get the forcing contribution
\begin{align*}
\int_{s_0}^s|F_{\partial_X^3U}\circ\Phi(X_0,s')|\,ds'&\lesssim \int_{s_0}^s M^\frac{5}{2}e^{-s'}\,ds'+\int_{s_0}^s M^\frac{1}{2}\big(1+l^2e^{\frac{2}{5}(s'-s_0)}\big)^{-\frac{2}{3}}\,ds'\\
&\lesssim M^\frac{5}{2}e^{-s_0}+\frac{15}{2}M^\frac{1}{2}\log\frac{1}{l}.
\end{align*}
Hence, by \eqref{eq:gronwall}, we have
\begin{align*}
|\partial_X^3U\circ\Phi(X_0,s)|&\leq |\partial_X^3U(X_0,s_0)|e^{-4(s-s_0)}l^{-60}+CM^\frac{5}{2}e^{-s_0}l^{-60}+CM^\frac{1}{2}l^{-60}\log\frac{1}{l}.
\end{align*}
If $l\leq |X_0|\leq \frac{1}{2}\epsilon^{-\frac{3}{2}}$, $s_0=-\log\epsilon$, then we use the initial data assumption \eqref{eq:init3xmiddle} to get
\begin{align*}
|\partial^3_XU(X,s)|\leq M^\frac{1}{2}e^{-4(s+\log\epsilon)}l^{-60}+CM^\frac{1}{2}
l^{-60}\log\frac{1}{l}.
\end{align*}
Otherwise, $s_0>-\log\epsilon$, $X_0=l$ and we use \eqref{eq:barUnear0}, \eqref{eq:xtildeUnear0} with $j=3$ to get
\begin{align*}
|\partial^3_XU(X,s)|&\leq
\big(|\partial^3_X\overline{U}(l)|+|\partial^3_X\widetilde{U}(l,s_0)|\big)e^{-4(s-s_0)}l^{-60}+CM^\frac{1}{2}l^{-60}\log\frac{1}{l}\\
&\leq\big(6+2\log M\epsilon^\frac{1}{10}l\big)l^{-60}+CM^\frac{1}{2}l^{-60}\log\frac{1}{l}.
\end{align*}
In either case, by making $M$ sufficiently large so that 
\begin{gather*}
(\log M)^{120}(1+C\log\log M)\leq\frac{3}{8}M^\frac{1}{4},\\
(\log M)^{120}\big(6+2(\log M)^{-1}\epsilon^\frac{1}{10}+CM^\frac{1}{2}\log\log M\big)\leq \frac{3}{8}M^\frac{3}{4},
\end{gather*}
we can obtain
\begin{align*}
|\partial^3_XU(X,s)|\leq \frac{3}{8}M^\frac{3}{4},
\end{align*}
which closes the bootstrap \eqref{eq:3xUmiddle}.
\end{proof}

\subsection{Temporal decay of $\partial_XU$, $\partial_X^2U$ for $|X|\geq \frac{1}{2}e^{\frac{3}{2}s}$}
From \eqref{eq:1partialU}, \eqref{eq:npartialU} with $n=2$, we get equations for $e^s\partial_XU$ and $e^s\partial_X^2U$
\begin{gather}
\Big(\partial_s+\frac{\partial_XU}{1-\dot{\tau}}\Big)(e^s\partial_XU)+V\partial_X(e^s\partial_XU)=\frac{1}{1-\dot{\tau}}H[\partial_XU],\label{eq:eq1xfar}\\
\Big(\partial_s+\frac{3}{2}+\frac{3\partial_XU}{1-\dot{\tau}}\Big)(e^s\partial_X^2U)+V\partial_X(e^s\partial_X^2U)=\frac{1}{1-\dot{\tau}}H[\partial_X^2U]\label{eq:eq2xfar}.
\end{gather}

\begin{proposition}[$\partial_XU$]
For any $(X,s)$ such that $|X|\geq \frac{1}{2}e^{\frac{3}{2}s}$, 
\begin{align*}
|\partial_XU(X,s)|&\leq 2e^{-s}.
\end{align*}
\end{proposition}

\begin{proof}
We consider \eqref{eq:eq1xfar}.  Denote $W:=e^s\partial_XU$. Then $D_W=\frac{\partial_XU}{1-\dot{\tau}}$. By Lemma \ref{lem:1-dottau} and  \eqref{eq:1xUfar}, we have
\begin{align*}
-D_W(X,s)\leq 2(1+2\epsilon^\frac{3}{4})e^{-s}\leq 3e^{-s}.
\end{align*}
Hence, for $(X_0,s_0)$ such that $|X_0|\geq \frac{1}{2}e^{\frac{3}{2}s_0}$, we have
\begin{align}
\exp\Big(\int_{s_0}^s -D_W\circ\Phi(X_0,s')\,ds'\Big)\leq e^{3e^{-s_0}}\leq e^{3\epsilon}\leq 1+2\cdot 3\epsilon\leq \frac{3}{2},\label{eq:Taylor}
\end{align}
by Taylor expansion $e^x\approx 1+x+o(x)$ for $x\approx 0$. Also $F_W=\frac{1}{1-\dot{\tau}}H[\partial_XU]$. By \eqref{eq:H1xUfar} in Lemma \ref{lem:H1x}, we have
$|F_W(X,s)|\lesssim e^{-\frac{1}{4}s}$. By composing \eqref{eq:eq1xfar} with Lagrangian trajectory and using a similar Gr\"onwall type argument as \eqref{eq:gronwall}, and \eqref{eq:Taylor},
we have
\begin{align*}
e^s|\partial_XU\circ\Phi(X_0,s)|&\leq e^{s_0}|\partial_XU(X_0,s_0)|\cdot\frac{3}{2}+C\int_{s_0}^se^{-\frac{1}{4}s'}\cdot\frac{3}{2}\,ds'\\
&\leq \frac{3}{2}e^{s_0}|\partial_XU(X_0,s_0)|+Ce^{-\frac{1}{4}s_0}.
\end{align*}
By the same reasoning after \eqref{eq:gronwall}, either $|X_0|\geq \frac{1}{2}\epsilon^{-\frac{3}{2}},\ s_0=-\log\epsilon$, or $|X_0|=\frac{1}{2}e^{\frac{3}{2}s_0}$ for some $s_0>-\log\epsilon$. In the first case, we use initial data assumption \eqref{eq:1xUfar}; in the second case, we use \eqref{eq:1xUmiddle} at $|X_0|=\frac{1}{2}e^{\frac{3}{2}s_0}$ and \eqref{eq:1xbarUfar} to get
\begin{align*}
|\partial_XU(\pm\frac{1}{2}e^{\frac{3}{2}s_0},s_0)|\leq (\frac{7}{20}+\epsilon^\frac{1}{12})\big(1+\frac{1}{4}e^{-3s_0}\big)^{-\frac{1}{3}}\leq \frac{9}{20}\cdot 2^\frac{3}{2}e^{-s_0}\leq \frac{3}{4}e^{-s_0}.
\end{align*}
Combining both cases, by using $\epsilon^\frac{1}{4}$ to absorb the constant, we get
\begin{align*}
e^s|\partial_XU(X,s)|\leq \frac{3}{2}\cdot \frac{3}{4}+C\epsilon^\frac{1}{4}\leq \frac{3}{2}<2.
\end{align*}
This closes the bootstrap assumption \eqref{eq:1xUfar}.
\end{proof}

\begin{proposition}[$\partial_X^2U$]
For any $(X,s)$ such that $|X|\geq \frac{1}{2}e^{\frac{3}{2}s}$,
\begin{align*}
|\partial_X^2U(X,s)|\leq 4M^\frac{1}{4}e^{-s}.
\end{align*}
\end{proposition}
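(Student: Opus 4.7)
The plan is to run the same Gr\"onwall-along-trajectories argument as in the preceding proposition for $\partial_X U$, but applied to $W := e^s\partial_X^2 U$ using equation \eqref{eq:eq2xfar}. The damping coefficient is $D_W = \frac{3}{2} + \frac{3\partial_X U}{1-\dot{\tau}}$, and the forcing is $F_W = \frac{1}{1-\dot{\tau}}H[\partial_X^2 U]$. The key difference from the $\partial_X U$ case is that we now have an honest positive damping of size $3/2$, which should give an extra exponential decay factor $e^{-\frac{3}{2}(s-s_0)}$ on top of what we needed before.

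First, I would bound the damping from below. Since $|\partial_X U(X,s)| \leq 2e^{-s}$ in the far region by the just-closed bootstrap \eqref{eq:1xUfar}, together with Lemma \ref{lem:1-dottau}, we have $D_W \geq \frac{3}{2} - 6(1+2\epsilon^{3/4})e^{-s}$. Hence
\begin{align*}
\exp\Big(-\int_{s_0}^s D_W\circ\Phi(X_0,s')\,ds'\Big) \leq e^{-\frac{3}{2}(s-s_0)}\exp\Big(\int_{s_0}^\infty 7e^{-s'}\,ds'\Big) \leq \tfrac{3}{2}\,e^{-\frac{3}{2}(s-s_0)},
\end{align*}
using the Taylor expansion trick from \eqref{eq:Taylor}. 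Next I use \eqref{eq:H2xUfar} of Lemma \ref{lem:H2x}, which gives $|F_W(X,s)| \lesssim M^{1/4}e^{-s/4}$ in the far region, so the forcing contribution in \eqref{eq:gronwall} is bounded by
\begin{align*}
\int_{s_0}^s C M^{1/4} e^{-s'/4} \cdot \tfrac{3}{2} e^{-\frac{3}{2}(s-s')}\,ds' \;\lesssim\; M^{1/4} e^{-s/4}.
\end{align*}

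For the initial/boundary contribution, I split into the two standard cases discussed after \eqref{eq:gronwall}. If $s_0 = -\log\epsilon$ and $|X_0| \geq \frac{1}{2}\epsilon^{-3/2}$, then \eqref{eq:init2xUfar} yields $e^{s_0}|\partial_X^2 U(X_0,s_0)| \leq 2M^{1/4}\epsilon\cdot\epsilon^{-1} = 2M^{1/4}$. If instead $s_0 > -\log\epsilon$ and $|X_0| = \frac{1}{2}e^{\frac{3}{2}s_0}$, then the just-closed bootstrap \eqref{eq:2xUmiddle} evaluated at the boundary gives $|\partial_X^2 U(X_0,s_0)| \leq M^{1/4}(1+\tfrac{1}{4}e^{3s_0})^{-1/3} \leq 2^{2/3}M^{1/4}e^{-s_0}$, so again $e^{s_0}|\partial_X^2 U(X_0,s_0)| \leq 2M^{1/4}$.

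Combining both cases,
\begin{align*}
e^s|\partial_X^2 U(X,s)| \;\leq\; \tfrac{3}{2}\cdot 2M^{1/4}\cdot e^{-\frac{3}{2}(s-s_0)} + CM^{1/4}\epsilon^{1/4} \;\leq\; 3M^{1/4} + CM^{1/4}\epsilon^{1/4} \;\leq\; 4M^{1/4},
\end{align*}
for $\epsilon$ sufficiently small, which closes \eqref{eq:2xUfar}. I do not anticipate any serious obstacle: the damping $3/2$ is strong, and the slow decay $e^{-s/4}$ of the Hilbert forcing is already harmless once multiplied by the damping exponential. The only thing to verify carefully is the matching at the inflow boundary $|X_0|=\frac{1}{2}e^{\frac{3}{2}s_0}$, but this falls out cleanly from \eqref{eq:2xUmiddle} since the weight $(1+X_0^2)^{-1/3}$ is comparable to $e^{-s_0}$ there.
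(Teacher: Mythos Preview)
Your proposal is correct and follows essentially the same argument as the paper: both apply the Gr\"onwall estimate \eqref{eq:gronwall} to $e^s\partial_X^2U$ via \eqref{eq:eq2xfar}, bound the damping using \eqref{eq:1xUfar}, bound the forcing using \eqref{eq:H2xUfar}, and handle the two inflow cases $s_0=-\log\epsilon$ and $|X_0|=\tfrac12 e^{\frac32 s_0}$ with \eqref{eq:init2xUfar} and \eqref{eq:2xUmiddle} respectively. The only cosmetic difference is that the paper rounds the damping down to $D\ge 1$ instead of keeping the full $3/2$ as you do; both are more than enough. One small wording caution: you refer to \eqref{eq:1xUfar} and \eqref{eq:2xUmiddle} as ``just-closed'' bootstrap assumptions, but within the bootstrap you may freely use them as assumptions regardless of the order in which the closures are written up.
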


\begin{proof}
The $\partial_X^2U$ estimate is similar. We consider \eqref{eq:eq2xfar}.  Denote $Z:=e^s\partial_X^2U$, so that $D_Z=\frac{3}{2}+\frac{3\partial_XU}{1-\dot{\tau}}$, $F_Z=\frac{1}{1-\dot{\tau}}H[\partial_X^2U]$. By \eqref{eq:1xUfar},
\begin{align*}
-D_Z(X,s)\leq -\frac{3}{2}+3(1+2\epsilon^\frac{3}{4})\cdot 2e^{-s}\leq -\frac{3}{2}+8\epsilon\leq -1.
\end{align*}
And by \eqref{eq:H2xUfar} in Lemma \ref{lem:H2x}, we have $|F_Z(X,s)|\lesssim M^\frac{1}{4}e^{-\frac{1}{4}s}$. Composing \eqref{eq:eq2xfar} with Lagrangian trajectory, and using the damping and forcing bounds, we have
\begin{align*}
e^s|\partial_X^2U\circ\Phi(X_0,s)|&\leq e^{s_0}|\partial_X^2U(X_0,s_0)|e^{-(s-s_0)}+CM^\frac{1}{4}\int_{s_0}^se^{-\frac{1}{4}s'}e^{-(s-s')}\,ds'\\
&\leq e^{s_0}|\partial_X^2U(X_0,s_0)|+CM^\frac{1}{4}e^{-\frac{1}{4}s}.
\end{align*}
If $|X_0|\geq \frac{1}{2}\epsilon^{-\frac{3}{2}},s_0=-\log\epsilon$, then we use the initial data assumption \eqref{eq:init2xUfar}; if $|X_0|=\frac{1}{2}e^{\frac{3}{2}s_0}$ for some $s_0>-\log\epsilon$, then we use \eqref{eq:2xUmiddle} at $|X_0|=\frac{1}{2}e^{\frac{3}{2}s_0}$
\begin{align*}
|\partial_X^2U(\pm\frac{1}{2}e^{\frac{3}{2}s_0},s_0)|\leq M^\frac{1}{4}\big(1+\frac{1}{4}e^{3s_0}\big)^{-\frac{1}{3}}\leq 2M^\frac{1}{4}e^{-s_0}.
\end{align*}
So in both cases, by using $\epsilon^\frac{1}{4}$ to absorb the constant, we have
\begin{align*}
e^s|\partial_X^2U(X,s)|\leq 2M^\frac{1}{4}+CM^\frac{1}{4}\epsilon^\frac{1}{4}\leq 3M^\frac{1}{4}<4M^\frac{1}{4}.
\end{align*}
This closes the bootstrap \eqref{eq:2xUfar}.
\end{proof}

\subsection{$L^\infty$-norm of $U+e^\frac{s}{2}\kappa$}
\begin{proposition}
The $L^\infty$-norm of $e^{-\frac{s}{2}}U(\cdot,s)+\kappa$ is bounded uniformly in $s$. More precisely,
\begin{align*}
\|e^{-\frac{s}{2}}U(\cdot,s)+\kappa\|_{L^\infty}\leq M,\quad\text{i.e.}\quad
\|U(\cdot,s)+e^\frac{s}{2}\kappa\|_{L^\infty}\leq Me^\frac{s}{2}.
\end{align*}
\end{proposition}

\begin{proof}
From \eqref{eq:BH} or \eqref{eq:ansatz} we get equation for $e^{-\frac{s}{2}}U(\cdot,s)+\kappa$ (see Appendix \ref{ap:derivation} for the derivation)
\begin{align}
    \partial_s(e^{-\frac{s}{2}}U+\kappa)+V\partial_X(e^{-\frac{s}{2}}U+\kappa)=\frac{e^{-\frac{3}{2}s}}{1-\dot{\tau}}H[U+e^\frac{s}{2}\kappa].\label{eq:U+kappa}
\end{align}
By Lemma \ref{lem:H[U]}, we can bound the forcing term on the right hand side
\begin{align*}
\Big|\frac{e^{-\frac{3}{2}s}}{1-\dot{\tau}}H[U+e^\frac{s}{2}\kappa]\Big|\lesssim Me^{-\frac{3}{4}s}.
\end{align*}
Composing \eqref{eq:U+kappa} with Lagrangian trajectory we have
\begin{align*}
    |(e^{-\frac{s}{2}}U+\kappa)\circ\Phi(X_0,s)|&\leq |e^{\frac{\log{\epsilon}}{2}}U(X_0,-\log{\epsilon})+\kappa(0)|\\
    &\qquad+\int_{-\log{\epsilon}}^s\frac{e^{-\frac{3}{2}s'}}{1-\dot{\tau}}\big|H[U+e^\frac{s'}{2}\kappa]\circ\Phi(X_0,s')\big|\,ds'\\
    &\leq \frac{M}{2}+C\int_{-\log{\epsilon}}^\infty Me^{-\frac{3}{4}s'}\,ds'\\
    &\leq \frac{M}{2}+CM\epsilon^\frac{3}{4}\leq \frac{3}{4}M 
\end{align*}
for all $X_0$. We thus close the assumption \eqref{eq:ULinfty}.
\end{proof}

\subsection{$\partial_X^3U$ for $|X|\geq \frac{1}{2}e^{\frac{3}{2}s}$}
\begin{proposition}[$\partial^3_XU$]
For any $(X,s)$ such that $|X|\geq \frac{1}{2}e^{\frac{3}{2}s}$, we have
\begin{align*}
|\partial^3_XU(X,s)|\leq M^\frac{3}{4}.
\end{align*}
\end{proposition}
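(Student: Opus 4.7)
The plan is to close the bootstrap assumption \eqref{eq:3xUfar} by composing equation \eqref{eq:3partialU} with the Lagrangian trajectory $\Phi(X_0,s)$ and applying a Gr\"onwall-type argument, mirroring the structure used for $\partial_X U$ and $\partial_X^2 U$ in the previous two propositions. The damping coefficient from \eqref{eq:3partialU} is $D_{\partial_X^3 U} = 4 + \frac{4\partial_X U}{1-\dot\tau}$, and using the bootstrap $|\partial_X U(X,s)| \leq 2e^{-s}$ from \eqref{eq:1xUfar} together with Lemma \ref{lem:1-dottau}, one gets
$$D_{\partial_X^3 U}(X,s) \geq 4 - 8(1+2\epsilon^{3/4}) e^{-s} \geq 3$$
for $s \geq -\log\epsilon$ with $\epsilon$ sufficiently small. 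Thus the damping provides strong exponential decay in self-similar time, in sharp contrast to the middle-region case where the damping had to be balanced against other terms.

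Next, I would invoke Lemma \ref{lem:forcefar}, which bounds the forcing term $F_{\partial_X^3 U}$ (consisting of $\frac{e^{-s}}{1-\dot\tau}H[\partial_X^3 U]$ and $-\frac{3}{1-\dot\tau}(\partial_X^2 U)^2$) by $CM^{5/2} e^{-s}$ in the far region. Composing \eqref{eq:3partialU} with $\Phi(X_0,s)$ and applying the Gr\"onwall formula \eqref{eq:gronwall} gives
$$|\partial_X^3 U \circ \Phi(X_0,s)| \leq |\partial_X^3 U(X_0,s_0)|\, e^{-3(s-s_0)} + C M^{5/2} \int_{s_0}^s e^{-s'} e^{-3(s-s')}\,ds' \leq |\partial_X^3 U(X_0,s_0)| + C M^{5/2} e^{-s_0}.$$

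As in the treatments of $\partial_X U$ and $\partial_X^2 U$ in the far region, the trajectory's starting data at time $s_0$ falls into one of two cases. Either $s_0 = -\log\epsilon$ with $|X_0| \geq \epsilon^{-3/2}$, in which case the initial data assumption \eqref{eq:init3xUfar} gives $|\partial_X^3 U(X_0,-\log\epsilon)| \leq \tfrac{1}{2} M^{3/4}$; or $s_0 > -\log\epsilon$ and $|X_0| = \tfrac{1}{2}e^{\frac{3}{2}s_0}$, in which case the middle-region bootstrap \eqref{eq:3xUmiddle} yields $|\partial_X^3 U(X_0,s_0)| \leq \tfrac{1}{2} M^{3/4}$. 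In either case, using $e^{-s_0} \leq \epsilon$, we conclude
$$|\partial_X^3 U(X,s)| \leq \tfrac{1}{2} M^{3/4} + C M^{5/2} \epsilon \leq \tfrac{3}{4} M^{3/4},$$
provided $\epsilon = \epsilon(M)$ is chosen small enough that $C M^{7/4} \epsilon \leq \tfrac{1}{4}$, which is compatible with the standing smallness assumption on $\epsilon$ in terms of $M$. This improves the bound by a factor $3/4$ and closes the bootstrap \eqref{eq:3xUfar}.

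I do not anticipate any serious obstacle: the damping is strong (coefficient $\approx 4$) and the forcing decays like $M^{5/2}e^{-s}$, so the integrated forcing contribution $CM^{5/2}\epsilon$ is absorbed by taking $\epsilon$ small depending on $M$. The only conceptual point to verify is the dichotomy on $(X_0,s_0)$, which relies on the fact (already used in the earlier far-region propositions) that a point at $|X|\geq \tfrac{1}{2}e^{\frac{3}{2}s}$ either sat in the far region for all $s'\in[-\log\epsilon,s]$ or crossed the threshold $|X|=\tfrac{1}{2}e^{\frac{3}{2}s'}$ at some unique earlier time $s_0$.
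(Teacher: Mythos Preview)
Your proposal is correct and follows essentially the same route as the paper's proof: bound the damping $D_{\partial_X^3U}\ge 4-8(1+2\epsilon^{3/4})e^{-s}$ from below (the paper gets $\tfrac{7}{2}$, you get $3$, an immaterial numerical difference), invoke Lemma~\ref{lem:forcefar} for the forcing, apply Gr\"onwall along $\Phi(X_0,s)$, and split into the two starting-time cases using \eqref{eq:init3xUfar} and \eqref{eq:3xUmiddle}. One very minor remark: the integrated forcing actually yields $CM^{5/2}e^{-s}$ rather than $CM^{5/2}e^{-s_0}$, and the threshold in the first case is $|X_0|\ge\tfrac{1}{2}\epsilon^{-3/2}$, but neither affects the conclusion.
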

\begin{proof}
Consider \eqref{eq:npartialU} with $n=3$. For $|X|\geq \frac{1}{2}e^{-\frac{3}{2}s}$, by \eqref{eq:1xUfar} and Lemma \ref{lem:1-dottau}, we have
\begin{align*}
D_{\partial^3_XU}(X,s)\geq 4-4(1+2\epsilon^\frac{3}{4})2e^{-s}\geq 4-8\cdot \frac{3}{2}\epsilon\geq   \frac{7}{2}.
\end{align*}
Hence, by Lemma \ref{lem:forcefar},
\begin{align*}
|\partial^3_XU\circ\Phi(X_0,s)|&\leq |\partial^3_XU(X_0,s_0)|e^{-\frac{7}{2}(s-s_0)}+CM^\frac{5}{2}\int_{s_0}^se^{-s'}e^{-\frac{7}{2}(s-s')}\,ds'\\
&\leq |\partial^3_XU(X_0,s_0)|+CM^\frac{5}{2}e^{-s}.
\end{align*}
If $|X_0|\geq\frac{1}{2}\epsilon^{-\frac{3}{2}}, s_0=-\log\epsilon$, then we use the initial data assumption \eqref{eq:init3xUfar}; if $|X_0|=\frac{1}{2}e^{\frac{3}{2}s_0}$ for some $s_0>-\log\epsilon$, then we use \eqref{eq:3xUmiddle} at $|X_0|=\frac{1}{2}e^{\frac{3}{2}s_0}$. In both cases, by using $\epsilon$ to absorb $M^\frac{7}{4}$ and the constant, we get
\begin{align*}
|\partial_X^3U(X,s)|\leq \frac{1}{2}M^\frac{3}{4}+CM^\frac{5}{2}\epsilon\leq \frac{3}{4}M^\frac{3}{4}.
\end{align*}
This closes the bootstrap \eqref{eq:3xUfar}.
\end{proof}

\subsection{Dynamic modulation variables}\label{sec:blowuptime}
\begin{proposition}[$\tau$ and $T_*$]
The dynamic variable $\tau$ satisfies 
\begin{gather*}
|\dot{\tau}(t)|\leq e^{-\frac{3}{4}s},\quad |\tau(t)|\leq 2\epsilon^\frac{7}{4}.
\end{gather*}
Moreover, $\tau(t)>t$ for all $t\in [-\epsilon,T_*)$. The blowup time $T_*$ satisfies $|T_*|\leq 2\epsilon^\frac{7}{4}$. 

\end{proposition}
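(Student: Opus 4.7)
The plan is to use the modulation equation \eqref{eq:tau}, namely $\dot\tau = e^{-s}H[\partial_X U](0,s)$, to reduce the closure of the $\tau$ bootstrap to a single pointwise estimate of the Hilbert transform at the origin, and then integrate in $t$.

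First I would bound $H[\partial_X U](0,s)$. The constraints $\partial_X U(0,s) = -1$ and $\partial_X^2 U(0,s) = 0$ from \eqref{eq:constraints} allow me to rewrite the principal value as $-\tfrac{1}{\pi}\int (\partial_X U(Y,s)+1)/Y\,dY$, with an integrand that vanishes to second order at $Y=0$ thanks to $\partial_X^2 U(0,s)=0$. I would split into $|Y|<l$, $l \leq |Y| \leq \tfrac{1}{2}e^{3s/2}$, and $|Y| \geq \tfrac{1}{2}e^{3s/2}$. The innermost region contributes $O(l^2)$ via a Taylor expansion using the bound $|\partial_X^3 U(0,s)|\leq 7$ from \eqref{eq:3xUat0}. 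The middle region is handled by \eqref{eq:1xUmiddle} together with the cancellation $\int 1/Y\,dY = 0$ on symmetric intervals: the integrand is dominated by $(1+Y^2)^{-1/3}/|Y|$, whose integral is $\lesssim \log(1/l)\sim \log\log M$, the dominant contribution. The far region uses \eqref{eq:1xUfar} and Cauchy--Schwarz against $\|\partial_X U\|_{L^2}\leq 10$ from \eqref{eq:1xUL2}, giving $O(e^{-3s/4})$. Altogether $|\dot\tau(t)|\lesssim (\log\log M)\,e^{-s}$. Writing $e^{-s}= e^{-s/4}\cdot e^{-3s/4}$ with $e^{-s/4}\leq \epsilon^{1/4}$, a small power of $\epsilon$ absorbs the $\log\log M$ factor, giving $|\dot\tau(t)| \leq \tfrac{1}{2}e^{-3s/4}$, which strictly improves the bootstrap.

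For the integrated bound on $\tau$, I use $\tau(-\epsilon)=0$ and integrate the closed bound on $\dot\tau$. Via the identity $e^{-s(t')}=\tau(t')-t'$ together with the bootstrap bound $|\tau|\leq 2\epsilon^{7/4}$ and $|t'|\leq\epsilon$, one has $e^{-s(t')}\leq 2\epsilon$; multiplying by the integration window of length at most $T_*+\epsilon\leq 2\epsilon$ yields $|\tau(t)|\lesssim (\log\log M)\,\epsilon^2 \leq \epsilon^{7/4}$ for $\epsilon$ sufficiently small, closing the bootstrap. The estimate $|T_*|\leq 2\epsilon^{7/4}$ is then immediate from $T_* = \tau(T_*)$. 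Finally, for $\tau(t)>t$ on $[-\epsilon,T_*)$: since $|\dot\tau|\leq \tfrac{1}{2}\epsilon^{3/4} < 1$, the function $\tau(t)-t$ is strictly decreasing from its initial value $\tau(-\epsilon)+\epsilon=\epsilon > 0$ down to $\tau(T_*) - T_* = 0$, and hence remains strictly positive on the open interval. The only real subtlety is avoiding an $s$-dependent logarithm in the Hilbert transform at $X=0$; this is achieved by treating the region $|Y|\gtrsim e^{3s/2}$ via Cauchy--Schwarz with $\|\partial_X U\|_{L^2}$ rather than by a pointwise bound, and by exploiting the odd-kernel cancellation in the middle region so that only the $(1+Y^2)^{-1/3}$ bound enters.
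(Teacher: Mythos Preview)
Your argument is correct, but the route you take to bound $H[\partial_X U](0,s)$ differs from the paper's. The paper simply quotes Lemma~\ref{lem:H[partialjU]}, which gives the uniform bound $\|H[\partial_X U](\cdot,s)\|_{L^\infty}\lesssim M^{1/2}$ via Gagliardo--Nirenberg interpolation between $\|\partial_X U\|_{L^2}$ and $\|\partial_X^5 U\|_{L^2}$; one line then yields $|\dot\tau|\lesssim M^{1/2}e^{-s}\leq \tfrac{3}{4}e^{-3s/4}$ after absorbing the $M^{1/2}$ into $e^{-s/4}$. You instead perform a bespoke pointwise estimate at $X=0$, exploiting the constraints \eqref{eq:constraints} to cancel the singular part of the kernel and then splitting into the three bootstrap regions, arriving at the sharper constant $\log\log M$ rather than $M^{1/2}$. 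Your route is more elementary in that it avoids the $H^5$ machinery, but the paper's is shorter since the interpolation lemma is already in place for other purposes. For the remaining steps (integrating $\dot\tau$, the monotonicity of $\tau(t)-t$, and $|T_*|$) you proceed essentially as the paper does, with the minor variation that you read off $|T_*|\leq 2\epsilon^{7/4}$ directly from $T_*=\tau(T_*)$, whereas the paper derives it from the identity $\int_{-\epsilon}^{T_*}(1-\dot\tau)\,dt=\epsilon$.
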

\begin{proof}
By \eqref{eq:tau} and Lemma \ref{lem:H[U]}, we have
\begin{align*}
|\dot{\tau}|\leq e^{-s}\|H[\partial_XU](\cdot,s)\|_{L^\infty}\lesssim e^{-s}M^\frac{1}{2}\leq \frac{3}{4}e^{-\frac{3}{4}s},
\end{align*}
where we use a small power of $e^{-s}$ to absorb the constant and $M$ factor. By Fundamental Theorem of Calculus, the assumption that $|T_*|\leq 2\epsilon^\frac{7}{4}$ and $\tau(-\epsilon)=0$, we get
\begin{align*}
|\tau(t)|\leq |\tau(-\epsilon)|+\int_{-\epsilon}^{T_*}|\dot{\tau}(t')|\,dt'\leq (2\epsilon^\frac{7}{4}+\epsilon)\epsilon^\frac{3}{4}\leq \frac{3}{2}\epsilon^\frac{7}{4}
\end{align*}
by choosing $\epsilon$ sufficiently small. 

Consider $h(t):=t-\tau(t)$. We have 
\begin{gather*}
h(-\epsilon)=-\epsilon,\quad h(T_*)=0,\\
\dot{h}=1-\dot{\tau}\geq 1-\epsilon^\frac{3}{4}>0.
\end{gather*}
Hence, we must have $
h(t)<0$, which implies $t<\tau(t)$ for all $t\in [-\epsilon,T_*)$. 

For $T_*$, note that $\tau(-\epsilon)=0$, $\tau(T_*)=T_*$ is equivalent to 
\begin{align*}
\int_{-\epsilon}^{T_*}\big(1-\dot{\tau}(t)\big)\,dt=\epsilon.
\end{align*}
Since $|\dot{\tau}(t)|\leq \epsilon^\frac{3}{4}$, we have
\begin{align*}
(1-\epsilon^\frac{3}{4})(T_*+\epsilon)&=\int_{-\epsilon}^{T_*}(1-\epsilon^\frac{3}{4})\,dt\leq \epsilon\\
\implies\qquad T_*&\leq \frac{\epsilon^\frac{7}{4}}{1-\epsilon^\frac{3}{4}}\leq \frac{3}{2}\epsilon^\frac{7}{4},
\end{align*}
if $\epsilon$ is sufficiently small. This closes the bootstrap \eqref{eq:taubound}.
\end{proof}

\begin{proposition}[$\kappa$ and $\xi$] The dynamic variable $\kappa$ satisfies $|\kappa(t)|\leq M$, and $\xi$ satisfies 
\begin{align*}
|\dot{\xi}(t)|\leq 2M,\quad |\xi(t)|\leq 3M\epsilon.
\end{align*}
\end{proposition}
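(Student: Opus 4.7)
The plan is to obtain $|\kappa(t)|\leq M$ as an immediate consequence of the already-closed $L^\infty$-bound on $u$, then to read off an explicit formula for $\dot\xi$ from the constraint equation \eqref{eq:kappaxi} and integrate.

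First I would note that evaluating \eqref{eq:uUrelation} at $X=0$ and using $U(0,s)=0$ from \eqref{eq:constraints} gives $u(\xi(t),t)=\kappa(t)$, which is \eqref{eq:kappa}. The just-proved bound $\|U(\cdot,s)+e^{s/2}\kappa\|_{L^\infty}\leq Me^{s/2}$ translates via \eqref{eq:uUrelation} into $\|u(\cdot,t)\|_{L^\infty}\leq M$, so $|\kappa(t)|=|u(\xi(t),t)|\leq M$ with no further work.

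Next I would use the second equality of \eqref{eq:kappaxi} to extract the explicit formula
\[
\dot\xi(t)=\kappa(t)-\frac{e^{-3s/2}}{\partial_X^3 U(0,s)}\,H[\partial_X^2U](0,s).
\]
Since $\partial_X^3U(0,s)>5$ by \eqref{eq:3xUat0} and $|H[\partial_X^2U](0,s)|\lesssim M^{3/2}$ by Lemma \ref{lem:H[partialjU]}, the correction term is at most $CM^{3/2}e^{-3s/2}\leq CM^{3/2}\epsilon^{3/2}$, and a small power of $\epsilon$ absorbs the $M^{3/2}$ factor. Combined with $|\kappa|\leq M$, this yields $|\dot\xi(t)|\leq \tfrac{3}{2}M$, strictly improving the bootstrap bound $|\dot\xi|\leq 2M$ in \eqref{eq:xibound}.

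Finally I would integrate this bound from $t=-\epsilon$ using $\xi(-\epsilon)=0$ and $|T_*|\leq 2\epsilon^{7/4}$ from the preceding proposition to conclude, via the Fundamental Theorem of Calculus,
\[
|\xi(t)|\leq\int_{-\epsilon}^{T_*}|\dot\xi(t')|\,dt'\leq\tfrac{3}{2}M(\epsilon+2\epsilon^{7/4})\leq 2M\epsilon,
\]
which strictly improves $3M\epsilon$. I expect no genuine obstacle: the whole proposition is routine once one has the $L^\infty$-bound on $u$, the modulation identity \eqref{eq:kappaxi}, the interpolation bound on $H[\partial_X^2U]$, and the bootstrap lower bound on $\partial_X^3U(0,s)$. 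The only mild care needed is to check that each inequality is strictly improved by a fixed factor, so that the standard bootstrap continuation closes.
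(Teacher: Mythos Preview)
Your proposal is correct and follows essentially the same argument as the paper: the $|\kappa|\leq M$ bound comes from $\|u\|_{L^\infty}\leq M$ via \eqref{eq:kappa} and \eqref{eq:ULinfty}, the bound $|\dot\xi|\leq\tfrac{3}{2}M$ comes from the modulation identity \eqref{eq:kappaxi} together with \eqref{eq:3xUat0} and Lemma~\ref{lem:H[partialjU]}, and the bound on $|\xi|$ follows by integrating from $-\epsilon$ using $|T_*|\leq 2\epsilon^{7/4}$. The paper obtains $|\xi(t)|\leq\tfrac{5}{2}M\epsilon$ rather than your $2M\epsilon$, but this is an immaterial numerical difference and both strictly improve $3M\epsilon$.
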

\begin{proof}
Since $\kappa(t)=u\big(\xi(t),t\big)$, and by \eqref{eq:ULinfty}
\begin{align*}
\|u(\cdot,t)\|_{L^\infty}=\|e^{-\frac{s}{2}}U(\cdot,s)+\kappa\|_{L^\infty}\leq M,
\end{align*}
we have $|\kappa(t)|\leq M$ for all $t\in[-\epsilon,T_*]$.

For $\xi$, by \eqref{eq:kappaxi}, Lemma \ref{lem:H[U]} and \eqref{eq:3xUat0}, 
\begin{align*}
|\dot{\xi}(t)|\leq |\kappa(t)|+Ce^{-\frac{3}{2}s}M^\frac{3}{2}\leq M+C\epsilon^\frac{3}{2}M^\frac{3}{2}\leq \frac{3}{2} M,
\end{align*}
where we use the $\epsilon^\frac{3}{2}$ to absorb the constant and $M$ factor. Since $\xi(-\epsilon)=0$, by Fundamental Theorem of Calculus, for all $t\in [-\epsilon,T_*]$,
\begin{align*}
|\xi(t)|\leq |\xi(-\epsilon)|+\int_{-\epsilon}^{T_*}|\dot{\xi}(t)|\,dt\leq (2\epsilon^\frac{7}{4}+\epsilon)\frac{3}{2} M\leq \frac{5}{2} M\epsilon.
\end{align*}
Hence, we close the bootstrap \eqref{eq:xibound}.
In particular, let $x_*:=\xi(T_*)$, then $|x_*|\leq 3M\epsilon$. 
\end{proof}

This section plus Section \ref{sec:L2estimate} together close the bootstrap part of Theorem \ref{thm:self}. 

\section{Proof of the main result}
\label{sec:proofmain}
\begin{proof}[Proof of Theorem \ref{thm:self}]
The bootstrap part was done in Section \ref{sec:L2estimate} and \ref{sec:closure}. It remains to prove the pointwise convergence of $U$. The convergence at $X=0$ is trivial due to the constraint \eqref{eq:constraints}. So we only need to prove the convergence for $X\neq 0$.

Let $\nu:=\lim_{s\to\infty}\partial_X^3U(0,s)$. We first check that this limit exists. Indeed, by Fundamental Theorem of Calculus, for every $s\geq -\log\epsilon$,
\begin{align*}
\partial_X^3U(0,s)=\partial_X^3U(0,-\log\epsilon)+\int_{-\log\epsilon}^s\partial_s\partial_X^3U(0,s')\,ds'.
\end{align*}
By \eqref{eq:partials3X}, $
|\partial_s\partial^3_XU(0,s)|=|\partial_s\partial^3_X\widetilde{U}(0,s)|\leq 3e^{-\frac{2}{5}s}.$
In particular, 
\begin{align*}
\int_{-\log\epsilon}^\infty|\partial_s\partial_X^3U(0,s)|\,ds<\infty.
\end{align*}
Hence, we have a well-defined limit
\begin{align*}
\lim_{s\to\infty}\partial_X^3U(0,s)=\partial_X^3U(0,-\log\epsilon)+\int_{-\log\epsilon}^\infty \partial_s\partial_X^3U(0,s)\,ds.
\end{align*}
By \eqref{eq:3xtildeUat0} and \eqref{eq:barUat0}, we have $|\nu-6|\leq \epsilon^\frac{1}{4}$.

Next, we compute the first 3 spatial derivatives of $\overline{U}_\nu$:
\begin{gather*}
\partial_X\overline{U}_\nu(X)=\partial_X\overline{U}\Big(\big(\frac{\nu}{6}\big)^\frac{1}{2}X\Big),\\
\partial_X^2\overline{U}_\nu(X)=\big(\frac{\nu}{6}\big)^\frac{1}{2}\partial_X^2\overline{U}\Big(\big(\frac{\nu}{6}\big)^\frac{1}{2}X\Big),\\
\partial_X^3\overline{U}_\nu(X)=\frac{\nu}{6}\partial_X^3\overline{U}\Big(\big(\frac{\nu}{6}\big)^\frac{1}{2}X\Big).
\end{gather*}
Thus, by \eqref{eq:barUat0}, $\partial_X\overline{U}_\nu(0)=-1$, $\partial_X^2\overline{U}_\nu(0)=0$, $\partial_X^3\overline{U}_\nu(0)=\nu$.

Let $\widetilde{U}_\nu:=U-\overline{U}_\nu$. By the constraint \eqref{eq:constraints}, the Taylor expansion of $\widetilde{U}_\nu(X,s)$ for $X\approx 0$ is
\begin{align*}
\widetilde{U}_\nu(X,s)&=\frac{1}{6}\partial_X^3\widetilde{U}_\nu(0,s)X^3+\frac{1}{24}\partial_X^4\widetilde{U}_\nu(X',s)X^4\qquad X'\text{ between }0\text{ and }X\\
\implies\qquad |\widetilde{U}_\nu(X,s)|&\leq\frac{1}{6}|\partial_X^3\widetilde{U}_\nu(0,s)||X|^3+\frac{1}{24}\|\partial_X^4\widetilde{U}(\cdot,s)\|_{L^\infty}X^4\\
&\leq \frac{1}{6}|\partial_X^3\widetilde{U}_\nu(0,s)||X|^3+CM^\frac{7}{2}X^4,
\end{align*}
since $\|\partial_X^4U(\cdot,s)\|_{L^\infty}\lesssim M^\frac{7}{2}$ (by the same proof as that of $\|H[\partial_X^4U](\cdot,s)\|_{L^\infty}$ in Lemma \ref{lem:H[partialjU]}), and
\begin{align*}
\|\partial_X^4\overline{U}_\nu\|_{L^\infty}=\big(\frac{\nu}{6}\big)^\frac{3}{2}\|\partial_X^4\overline{U}\|_{L^\infty}\leq 30 \big(\frac{\nu}{6}\big)^\frac{3}{2}.
\end{align*}
Fix $X_0\neq 0$ close to 0, and fix a small $\delta>0$ such that $\delta\leq M^\frac{7}{2}|X_0|^4$. Since $\lim_{s\to\infty}\partial_X^3\widetilde{U}_\nu(0,s)=0$, there exists a sufficiently large $s_0$ depending on $X_0$ and $\delta$, such that for all $s\geq s_0$,
\begin{align}
|\widetilde{U}_\nu(X_0,s)|\leq CM^\frac{7}{2}X_0^4+\delta.\label{eq:Taylor}
\end{align}

Now we need to find an equation for $\widetilde{U}_\nu$. First, note that $\overline{U}_\nu$ also satisfies the self-similar Burgers equation \eqref{eq:selfBurgers}. Second, we can rewrite \eqref{eq:ansatz} as 
\begin{align*}
\big(\partial_s-\frac{1}{2}\big)U+\big(U+\frac{3}{2}X\big)\partial_XU=F_U-\frac{e^\frac{s}{2}(\kappa-\dot{\xi})}{1-\dot{\tau}}\partial_XU-\frac{\dot{\tau}}{1-\dot{\tau}}U\partial_XU
\end{align*}
where $F_U$ is the right hand side of \eqref{eq:ansatz}. Note that the left hand side without the $\partial_sU$ matches the self-similar Burgers equation. Then
\begin{align}
\big(\partial_s-\frac{1}{2}+\partial_X\overline{U}_\nu\big)\widetilde{U}_\nu+\big(U+\frac{3}{2}X\big)\partial_X\widetilde{U}_\nu=F_U-\frac{e^\frac{s}{2}(\kappa-\dot{\xi})}{1-\dot{\tau}}\partial_XU-\frac{\dot{\tau}}{1-\dot{\tau}}U\partial_XU.\label{eq:eqtildenu}
\end{align}
Denote the right hand side of this equation as $F_{\tilde{U}_\nu}$. We claim that 
\begin{align}
\int_{-\log\epsilon}^\infty\|F_{\tilde{U}_\nu}(\cdot,s)\|_{L^\infty}\,ds'<\infty.\label{eq:Fintegrable}
\end{align}
Indeed, by Lemma \ref{lem:FU}, $\|F_U(\cdot,s)\|_{L^\infty}\lesssim M^\frac{3}{2}e^{-\frac{1}{4}s}$. By the first equality in \eqref{eq:kappaxi}, Lemma \ref{lem:H[partialjU]}, \eqref{eq:3xUat0} and \eqref{cor:1xULinfty},
\begin{align*}
\big|\frac{e^\frac{s}{2}(\kappa-\dot{\xi})}{1-\dot{\tau}}\big|\|\partial_XU(\cdot,s)\|_{L^\infty}\lesssim M^\frac{3}{2}e^{-s}.
\end{align*}
By \eqref{eq:ULinfty}, and $|\kappa|\leq M$, we have
\begin{align*}
\|U(\cdot,s)\|_{L^\infty}\leq \|U(\cdot,s)+e^\frac{s}{2}\kappa\|_{L^\infty}+e^\frac{s}{2}|\kappa|\leq 2Me^\frac{s}{2}.
\end{align*}
Then by \eqref{cor:1xULinfty} and \eqref{eq:taubound}, 
\begin{align*}
\big|\frac{\dot{\tau}}{1-\dot{\tau}}\big|\|U\partial_XU(\cdot,s)\|_{L^\infty}\lesssim e^{-\frac{3}{4}s}\cdot Me^\frac{s}{2}\lesssim Me^{-\frac{1}{4}s}.
\end{align*}
Putting the three terms together, we get
\begin{align*}
\|F_{\tilde{U}_\nu}(\cdot,s)\|_{L^\infty}\lesssim M^\frac{3}{2}e^{-\frac{1}{4}s}+M^\frac{3}{2}e^{-s}+Me^{-\frac{1}{4}s},
\end{align*}
which is $s$-integrable.

Let $\Psi(X_0,\cdot):[s_0,\infty)\to\mathbb{R}$ be the Lagrangian trajectory of $\widetilde{U}_\nu$, i.e. 
\begin{equation}
\begin{aligned}
\frac{d}{ds}\Psi(X_0,s)&=\big(U+\frac{3}{2}X\big)\circ \Psi(X_0,s),\\
\Psi(X_0,s_0)&=X_0,
\end{aligned}\label{eq:newtrajectory}
\end{equation}
By a similar argument as in Lemma \ref{lem:lowerLagrangian} (using Mean Value Theorem), we see that $\Psi$ is repelling\footnote{This statement says that the trajectory of $\Psi$ escapes to infinity exponentially fast for all $X_0\neq 0$, not just for $X_0$ away from 0 as in Lemma \ref{lem:lowerLagrangian}. In the end we will send $X_0\to 0$. This is the reason why we consider the new trajectory.} 
\begin{align}
|\Psi(X_0,s)|\geq |X_0|e^{\frac{2}{5}(s-s_0)}.\label{eq:repelling}
\end{align}
Let $G(X,s)=e^{-\frac{3}{2}(s-s_0)}\widetilde{U}_\nu(X,s)$, then
\begin{align*}
\frac{d}{ds}G\circ\Psi(X_0,s)&=-\frac{3}{2}e^{-\frac{3}{2}(s-s_0)}\widetilde{U}_\nu\circ\Psi(X_0,s)+e^{-\frac{3}{2}(s-s_0)}\frac{d}{ds}\widetilde{U}_\nu\circ\Psi(X_0,s)\\
&=-\frac{3}{2}G\circ\Psi(X_0,s)+e^{-\frac{3}{2}(s-s_0)}\Big[F_{\tilde{U}_\nu}+\big(\frac{1}{2}-\partial_X\overline{U}_\nu\big)\widetilde{U}_\nu\Big]\circ\Psi(X_0,s)\\
&=-\big(1+\partial_X\overline{U}_\nu\big)  G\circ\Psi(X_0,s)+e^{-\frac{3}{2}(s-s_0)}F_{\tilde{U}_\nu}\\
\implies\qquad \big(\frac{d}{ds}+1&+\partial_X\overline{U}_\nu\big)G\circ\Psi(X_0,s)=e^{-\frac{3}{2}(s-s_0)}F_{\tilde{U}_\nu}\circ\Psi(X_0,s).
\end{align*}
Since $\|\partial_X\overline{U}_\nu\|_{L^\infty}=\|\partial_X\overline{U}\|_{L^\infty}=1$, the damping term $1+\partial_X\overline{U}_\nu\geq 0$.
Hence, by Gr\"onwall's lemma, 
\begin{align*}
|G\circ \Psi(X_0,s)|&\leq |G(X_0,s_0)|+\int_{s_0}^s e^{-\frac{3}{2}(s'-s_0)}|F_{\tilde{U}_\nu}\circ\Psi(X_0,s')|\,ds'\\
e^{-\frac{3}{2}(s-s_0)}|\widetilde{U}_\nu\circ\Psi(X_0,s)|&\leq \widetilde{U}_\nu(X_0,s_0)+\int_{s_0}^s e^{-\frac{3}{2}(s'-s_0)}|F_{\tilde{U}_\nu}\circ\Psi(X_0,s')|\,ds'\\
&\leq CM^\frac{7}{2}X_0^4+\delta+\delta\leq (C+2)M^\frac{7}{2}X_0^4,
\end{align*}
where we used \eqref{eq:Taylor} and \eqref{eq:Fintegrable} upon making $s_0$ sufficiently large. Hence, 
\begin{align*}
|\widetilde{U}_\nu\circ\Psi(X_0,s)|\leq (C+2)e^{\frac{3}{2}(s-s_0)}M^\frac{7}{2}X_0^4.
\end{align*}
Then for $s_0\leq s\leq s_0+\frac{13}{5}\log|X_0|^{-1}$, we have
\begin{align}
|\widetilde{U}_\nu\circ\Psi(X_0,s)|\leq (C+2) M^\frac{7}{2}|X_0|^{4-\frac{3}{2}\cdot\frac{13}{5}}\leq (C+2) M^\frac{7}{2}|X_0|^\frac{1}{10}.\label{eq:positive}
\end{align}
For any $X$ between $X_0$ and $\Psi(X_0,s_0+\frac{13}{5}\log|X_0|^{-1})$ (e.g.\ if $X_0>0$, then this is $X_0\leq X\leq \Psi(X_0,s_0+\frac{13}{5}\log|X_0|^{-1})$), there exists $s_0\leq s\leq s_0+\frac{13}{5}\log|X_0|^{-1}$ such that $X=\Psi(X_0,s)$. So for such $(X,s)$, by \eqref{eq:positive},
\begin{align*}
|\widetilde{U}_\nu(X,s)|\leq (C+2)M^\frac{7}{2}|X_0|^\frac{1}{10}.
\end{align*}
By \eqref{eq:repelling}, this will cover at least all $X$ such that 
\begin{align*}
|X_0|\leq |X|\leq |X_0|e^{\frac{2}{5}\cdot\frac{13}{5}\log|X_0|^{-1}}=|X_0|^{-\frac{1}{25}}.
\end{align*}
So if we take the limit $s_0\to\infty$, then for all $X$ such that $|X_0|\leq |X|\leq |X_0|^{-\frac{1}{25}}$,
\begin{align*}
\limsup_{s\to\infty}|\widetilde{U}_\nu(X,s)|\leq (C+2) M^\frac{7}{2}|X_0|^\frac{1}{10}.
\end{align*}
 Finally, sending $X_0\to 0$, we get
\begin{align*}
\limsup_{s\to\infty}|\widetilde{U}_\nu(X,s)|=0
\end{align*}
for all $X\neq 0$. Therefore, the proof is complete.
\end{proof}

\begin{proof}[Proof of Theorem \ref{thm:main}]
\begin{enumerate}[leftmargin=*]
\item Solution in $\mathcal{C}\big([-\epsilon,T_*);\mathcal{C}^4\cap H^5(\mathbb{R})\big)$ exists and is unique: by \eqref{eq:1xUL2}, \eqref{eq:5xUL2}, \eqref{eq:xUL2}, $\|\partial^j_XU(\cdot,s)\|_{L^2}$ remains uniformly bounded in $s$ for $j=1,\dots,5$. By \eqref{eq:uUL2relation}, $\|\partial_x^ju(\cdot,t)\|_{L^2}$, $j=1,...,5$ remain finite before the blowup time. By Lemma \ref{lem:conservation}, $\|u(\cdot,t)\|_{L^2}=\|u_0\|_{L^2}$ for all $t\in[-\epsilon,T_*)$. Hence, for any $T<T_*$, $\|u(\cdot,t)\|_{H^5}<\infty$ for $t\in[-\epsilon,T]$. By Theorem \ref{thm:wellposed}, there exists a unique solution in $\mathcal{C}\big([-\epsilon,T_*);H^5(\mathbb{R})\big)$. Finally, we note that $H^5(\mathbb{R})\subset \mathcal{C}^4(\mathbb{R})$.
\item Blowup time $|T_*|\leq 2\epsilon^\frac{7}{4}$ and blowup location $|x_*|\leq 3M\epsilon$ are proved in Section \ref{sec:blowuptime}.
\item $L^\infty$-norm of solution $u$: by \eqref{eq:ULinfty}, $\|u(\cdot,t)\|_{L^\infty}=\|e^{-\frac{s}{2}}
U(\cdot,s)+\kappa(t)\|_{L^\infty}\leq M$ for all $t\in[-\epsilon,T_*]$.

\item Blow up of $\partial_xu$: since
\begin{align*}
\partial_xu(x,t)=\big(\tau(t)-t\big)^{\frac{1}{2}-\frac{3}{2}}\partial_XU\bigg(\frac{x-\xi(t)}{(\tau(t)-t)^\frac{3}{2}},s\bigg)=\frac{1}{\tau(t)-t}\partial_XU(X,s),
\end{align*}
we see that 
\begin{align*}
\partial_xu\big(\xi(t),t\big)=\frac{1}{\tau(t)-t}\partial_XU(0,s)=-\frac{1}{\tau(t)-t}.
\end{align*}
We claim that for all $t\in[-\epsilon,T_*)$,
\begin{align*}
\frac{1}{2}\leq \frac{\tau(t)-t}{T_*-t}\leq 2.
\end{align*}
Indeed, this is equivalent to 
\begin{align*}
\begin{cases}
T_*-t\leq 2\tau(t)-2t,\\
\tau(t)-t\leq 2T_*-2t,
\end{cases}
\quad\iff \quad\begin{cases}
2\tau(t)-t\geq T_*,\\
\tau(t)+t\leq 2T_*,
\end{cases}
\end{align*}
which is true since
\begin{align*}
\frac{d}{dt}\big(2\tau(t)-t\big)=2\dot{\tau}-1\leq 0,\quad 2\tau(T_*)-T_*=T_*,\\
\frac{d}{dt}\big(\tau(t)+t\big)=\dot{\tau}+1\geq 0,\quad \tau(T_*)+T_*=2T_*.
\end{align*}
Hence, as $t\to T_*$, $\partial_xu\big(\xi(t),t\big)\to -\infty$, so $\partial_xu$ blows up at $x_*=\xi(T_*)$. Moreover, we have the following rate
\begin{align*}
\frac{1}{2(T_*-t)}\leq |\partial_xu\big(\xi(t),t\big)|\leq \frac{2}{T_*-t}.
\end{align*}
In fact, since by \eqref{cor:1xULinfty}, $|\partial_XU(\cdot,s)|$ attains its maximum uniquely at $X=0$, we have $\|\partial_xu(\cdot,t)\|_{L^\infty}=|\partial_xu\big(\xi(t),t\big)|$. Hence, 
\begin{align*}
\frac{1}{2(T_*-t)}\leq \|\partial_xu(\cdot,t)\|_{L^\infty}\leq \frac{2}{T_*-t}.
\end{align*}
For $x\neq x_*$, if $|x-x_*|>\frac{1}{2}$, then there exists $t_1\in[-\epsilon,T_*)$ such that $|x-\xi(t)|\geq \frac{1}{2}$ for all $t\in [t_1,T_*]$. In terms of the self-similar variables, this implies 
\begin{align*}
|X|\geq \frac{1}{2}\big(\tau(t)-t\big)^{-\frac{3}{2}}=\frac{1}{2}e^{\frac{3}{2}s}\qquad\forall\,t\in[t_1,T_*).
\end{align*} 
Hence, by \eqref{eq:1xUfar}, we get 
\begin{align*}
|\partial_xu(x,t)|=e^s|\partial_XU(X,s)|\leq 2\qquad\forall\,t\in[t_1,T_*].
\end{align*}
If $|x-x_*|<\frac{1}{2}$, then there exists $t_2\in[-\epsilon,T_*)$ such that $\frac{1}{2}|x-x_*|\leq |x-\xi(t)|\leq \frac{1}{2}$ for all $t\in[t_2,T_*]$, i.e. $\frac{1}{2}|x-x_*|e^{\frac{3}{2}s}\leq |X|\leq \frac{1}{2}e^{\frac{3}{2}s}$. By choosing a larger $t_2$ if necessary, we may also assume $|X|\geq \frac{1}{2}e^{\frac{3}{2}s}\geq 1$. Hence, by \eqref{eq:1xUmiddle}, we get
\begin{align*}
|\partial_xu(x,t)|=\frac{1}{\tau(t)-t}|\partial_XU(X,s)|\leq \frac{1}{\tau(t)-t}|X|^{-\frac{2}{3}}\leq 2^\frac{2}{3}|x-x_*|^{-\frac{2}{3}}.
\end{align*}
In fact, by \eqref{eq:1xtildeUmiddle} and \eqref{eq:1xbarUfar}, for $x\neq x_*$ such that $|x-x_*|< \frac{1}{2}$,
\begin{align*}
|\partial_xu(x,t)|\sim |x-x_*|^{-\frac{2}{3}}\qquad\text{as }t\to T_*.
\end{align*}
This indicates that $u(\cdot,T_*)\in\mathcal{C}^\frac{1}{3}(\mathbb{R})$ and it has a cusp singularity at $(x_*,T_*)$, similar to the one of $|x|^\frac{1}{3}$ at $x=0$.
\end{enumerate}
\end{proof}

\begin{proof}[Proof of Corollary \ref{cor:open}]
\begin{enumerate}[leftmargin=*]
\item $\partial_xu_0(0)$ and $\partial^2_xu_0(0)$ can be taken in an open set of possible values: first note that $M$ and $\epsilon$ can be taken in an open set of values since they only need to be ``sufficiently large" and ``sufficiently small", respectively. Hence $\partial_xu_0(0)$ can be taken in an open set of values. Next, if $|\partial^4_xu_0(x)|\sim O(\epsilon^{-\frac{11}{2}})$ for $x$ near 0 (which is true for functions within a small $H^5$-open neighborhood of initial data given in Section \ref{sub:initial physical} and \ref{sub:initialself}), then we do a Taylor expansion near $x=0$:
\begin{align*}
\partial^2_xu_0(x)&=\partial^2_xu_0(0)+\partial^3_xu_0(0)x+O(\epsilon^{-\frac{11}{2}})x^2\\
&=\partial^2_xu_0(0)+6\epsilon^{-4}x+(\partial^3_xu_0(0)-6\epsilon^{-4})x+O(\epsilon^{-\frac{11}{2}})x^2.
\end{align*} 
If $\partial^2_xu_0(0)$ and $|\partial^3_xu_0(0)-6\epsilon^{-4}|$ are sufficiently small (both of which are true under small $H^5$-perturbation), then there exists an $x_0$ near 0 such that $\partial^2_xu_0(x_0)=0$, and that $\partial_xu_0$ attains minimum at $x_0$. By the change of coordinate $x\mapsto x-x_0$, our analysis does not change. So we can relax the condition $\partial^2_xu_0(0)=0$ into $|\partial^2_xu_0(0)|$ being sufficiently small.
\item Compact support property can be relaxed: note that we choose $u_0$ to have compact support only for convenience. We only need to bound $\|u_0\|_{L^2}$ and set a proper ``large scale" in $X$, in this case is $\frac{1}{2}e^{\frac{3}{2}s}$. Also any element in a small open neighborhood of $u_0$ in $H^5$-topology should have its bulk in $[-1,1]$. Hence, not having compact support only changes \eqref{eq:u0L2} and Proposition \ref{prop:UL2} by a constant factor, while keeping the ``large scale" threshold the same, so up to a constant factor it does not affect the closure of all the bootstrap assumptions in Section \ref{sec:assumptionself} and \ref{sec:assumptiondynamic}.
\item Inequalities can be replaced by strict inequalities: for all the initial data assumptions that are inequalities, even though they are not open conditions, we can introduce a pre-factor sufficiently close to 1 in terms of $M$ and $\epsilon$, and replace the ``$\leq$" with ``$<$" without affecting the proof.
\item An $H^5$-perturbation of $u_0$, if small enough (in terms of $M$ and $\epsilon$), plus a change of coordinate as above, leads to a small $H^5$-perturbation of $U(X,-\log\epsilon)+\kappa_0$, while keeping \eqref{eq:initU0}-\eqref{eq:init2xU0}. The $L^2$-estimates still hold with a slight change in value, and the closure of bootstrap, up to a pre-factor as discussed above, still holds. 
\end{enumerate}
\end{proof}

\appendix
\section{Local well-posedness of the BH equation}\label{ap:wellposed}
\begin{theorem}
The initial value problem for the BH equation 
\begin{equation*}
     \partial_t u+u\partial_xu=H[u],\qquad u(x,0)=u_0(x)
\end{equation*}\label{thm:wellposed}
is locally well-posed in $H^k$ for $k>3/2$.
\end{theorem}
\begin{proof}
For simplicity and relevance to our main theorem \ref{thm:main}, we only consider integer $k$, i.e. $k\geq 2$. We use a standard contraction mapping argument similar to the one for Burgers equation: multiplying \eqref{eq:BH} by $u$ and taking $H^k$-inner product
\begin{align*}
    \frac{1}{2}\frac{d}{dt}\|u\|_{H^k}^2+\langle u\partial_x u,u\rangle_{H^k}=\langle H[u],u\rangle_{H^k}.
\end{align*}
We recall that Hilbert transform of a function is orthogonal to itself. Also since Hilbert transform commutes with differentiation, we have $\langle H[u],u\rangle_{H^k}=0$. So the rest of the argument is identical to that of Burgers equation
\begin{align*}
    \frac{1}{2}\frac{d}{dt}\|u\|_{H^k}^2&\leq |\langle u\partial_xu,u\rangle_{H^k}|\\
    &\leq \sum_{j=0}^k\Big|\int_\mathbb{R}\partial^j_x(u\partial_xu)\partial^j_xu\,dx\Big|\\
    &=\sum_{j=0}^k\sum_{\alpha=2}^{j-1}|\langle \partial^\alpha_xu\partial^{j-\alpha+1}_xu,\partial^j_xu\rangle|+\langle \partial_xu\partial^j_xu+\partial^j_xu\partial_xu,\partial^j_xu\rangle+\Big|\int_\mathbb{R}u\partial^{j+1}_xu\partial^j_xu\,dx\Big|\\
&\lesssim \sum_{j=0}^k\sum_{\alpha=2}^{j-1}\|\partial^\alpha_xu\|_{L^\infty}\|\partial^{j-\alpha+1}_xu\|_{L^2}\|\partial^j_xu\|_{L^2}+\|\partial_xu\|_{L^\infty}\|\partial_x^ju\|_{L^2}^2+|\langle\partial_xu,(\partial_x^ju)^2\rangle|\\
&\lesssim \sum_{j=0}^k\sum_{\alpha=2}^{j-1}\|\partial_x^{\alpha}u\|_{H^1}\|\partial^{j-\alpha+1}_xu\|_{L^2}\|\partial^j_xu\|_{L^2}+\|\partial_xu\|_{L^\infty}\|\partial^j_xu\|_{L^2}^2\\
&\lesssim \|u\|_{H^k}^3,
\end{align*}
where we use integration by parts to treat the term 
\begin{align*}
    \int_\mathbb{R}u\partial^{j+1}_xu \partial^j_xu\,dx=\frac{1}{2}\int_\mathbb{R}u\partial_x\big[(\partial^j_xu)^2\big]\,dx=-\frac{1}{2}\int_\mathbb{R}\partial_x u(\partial_x^j u)^2\,dx,
\end{align*}
and we use Sobolev embedding $H^1\subset L^\infty$ in dimension 1 to treat the sum from $\alpha=2$ to $j-1$. Note that in this range of $\alpha$, both $\alpha,j-\alpha+1\leq j-1$. For the last $\lesssim$ we use Sobolev embedding $\|\partial_xu\|_{L^\infty}\lesssim \|u\|_{H^k}$ for $k>3/2$. Hence,
\begin{align*}
\frac{d}{dt}\|u\|_{H^k}&\leq C \|u\|_{H^k}^2\\
\|u(t)\|_{H^k}&\leq \big(\frac{1}{\|u_0\|_{H^k}}-Ct\big)^{-1}.
\end{align*}
Therefore, we get
\begin{align*}
    \|u(t)\|_{H^k}\leq 2\|u_0\|_{H^k}\qquad\forall\,t\in \big[0,\frac{1}{2C\|u_0\|_{H^k}}\big].
\end{align*}
In particular, the maximal time of existence $T_*\geq 1/2C\|u_0\|_{H^k}$. From this we also get a minimum blowup rate\footnote{This rate agrees with the blowup rate of $\|\partial_xu(\cdot,t)\|_{L^\infty}$ as stated in Theorem \ref{thm:main}.} $\sim (T_*-t)^{-1}.$
\end{proof}

\section{Derivation of the self-similar equations}\label{ap:derivation}
We first derive \eqref{eq:ansatz} from \eqref{eq:BH} step by step. 

From the self-similar transformation \eqref{eq:selftransform}, we have the identities
\begin{align*}
\tau(t)-t=e^{-s},\quad x=\big(\tau(t)-t\big)^\frac{3}{2}X+\xi(t)=e^{-\frac{3}{2}s}X+\xi(t).
\end{align*}
The self-similar transformation works well with Hilbert transform
\begin{align*}
H[u](x,t)&=\frac{1}{\pi}\mathrm{p.v.}\int_{\mathbb{R}}\frac{u(y,t)}{x-y}\,dy\\
&=\frac{1}{\pi}\mathrm{p.v.}\int_{\mathbb{R}}\frac{\big(\tau(t)-t\big)^\frac{1}{2}U\Big(\frac{x-\xi(t)}{(\tau(t)-t)^\frac{3}{2}},-\log\big(\tau(t)-t\big)\Big)+\kappa(t)}{x-y}\,dy.
\end{align*}
We do the change of variable 
\begin{gather*}
Y=\frac{y-\xi(t)}{(\tau(t)-t)^\frac{3}{2}},
\end{gather*}
and the same change for $x$, then use $\tau(t)-t=e^{-s}$ to get
\begin{align*}
H[u](x,t)&=\frac{1}{\pi}\mathrm{p.v.}\int_\mathbb{R}\frac{e^{-\frac{s}{2}}U(Y,s)+\kappa}{(\tau(t)-t)^\frac{3}{2}(X-Y)}\big(\tau(t)-t\big)^\frac{3}{2}\,dY\\
&=\frac{1}{\pi}\mathrm{p.v.}\int\frac{e^{-\frac{s}{2}}U(Y,s)+\kappa}{X-Y}\,dY\\
&=H[e^{-\frac{s}{2}}U+\kappa](X,s).
\end{align*}
Note that
\begin{align*}
\frac{\partial X}{\partial t}&=\frac{\partial}{\partial t}\Big[\frac{x-\xi(t)}{(\tau(t)-t)^\frac{3}{2}}\Big]\\
&=\frac{-\dot{\xi}\big(\tau(t)-t\big)^\frac{3}{2}-\frac{3}{2}\big(\tau(t)-t\big)^\frac{1}{2}(\dot{\tau}-1)\big(x-\xi(t)\big)}{(\tau(t)-t)^3}\\
&=-\dot{\xi}\big(\tau(t)-t\big)^{-\frac{3}{2}}+\frac{3}{2}(1-\dot{\tau})\big(\tau(t)-t\big)^{-1}\frac{x-\xi(t)}{(\tau(t)-t)^\frac{3}{2}}\\
&=-\dot{\xi}e^{\frac{3}{2}s}+\frac{3}{2}e^s(1-\dot{\tau})X,
\end{align*}
and that
\begin{align*}
\frac{ds}{dt}&=-\frac{d}{dt}\log\big(\tau(t)-t\big)=\frac{1-\dot{\tau}}{\tau(t)-t}=e^s(1-\dot{\tau}).
\end{align*}
So we have
\begin{align*}
\partial_tu(x,t)&=\partial_t\Big[\big(\tau(t)-t\big)^\frac{1}{2}U\Big(\frac{x-\xi(t)}{(\tau(t)-t)^\frac{3}{2}},-\log\big(\tau(t)-t\big)\Big)+\kappa(t)\Big]\\
&=\frac{1}{2}\big(\tau(t)-t\big)^{-\frac{1}{2}}(\dot{\tau}-1)U(X,s)+\big(\tau(t)-t\big)^\frac{1}{2}\Big[\partial_XU(X,s)\frac{\partial X}{\partial t}+\partial_sU(X,s)\frac{ds}{dt}\Big]+\dot{\kappa}\\
&=-\frac{1}{2}e^\frac{s}{2}(1-\dot{\tau})U(X,s)+e^{-\frac{s}{2}}\Big[\partial_XU(X,s)\big(-\dot{\xi}e^{\frac{3}{2}s}+\frac{3}{2}(1-\dot{\tau})e^sX\big)+\partial_sU(X,s)e^s(1-\dot{\tau})\Big]+\dot{\kappa}\\
&=-\frac{1}{2}e^\frac{s}{2}(1-\dot{\tau})U(X,s)+e^\frac{s}{2}\partial_XU(X,s)\big(-e^\frac{s}{2}\dot{\xi}+\frac{3}{2}(1-\dot{\tau})X\big)+e^\frac{s}{2}(1-\dot{\tau})\partial_sU(X,s)+\dot{\kappa},
\end{align*}
and 
\begin{align*}
\partial_xu(x,t)&=\partial_x\Big[\big(\tau(t)-t\big)^\frac{1}{2}U\Big(\frac{x-\xi(t)}{(\tau(t)-t)^\frac{3}{2}},-\log\big(\tau(t)-t\big)\Big)+\kappa(t)\Big]\\
&=\big(\tau(t)-t\big)^\frac{1}{2}\partial_XU(X,s)\big(\tau(t)-t\big)^{-\frac{3}{2}}\\
&=e^s\partial_XU(X,s).
\end{align*}
Plugging these into \eqref{eq:BH}, we get
\begin{align*}
-\frac{1}{2}e^\frac{s}{2}(1-\dot{\tau})U(X,s)+e^\frac{s}{2}\partial_XU(X,s)\big(-e^\frac{s}{2}\dot{\xi}+\frac{3}{2}(1-\dot{\tau})X\big)+e^\frac{s}{2}(1-\dot{\tau})\partial_sU(X,s)+\dot{\kappa}&\\
+e^s\partial_XU(X,s)\Big(e^{-\frac{s}{2}}U(X,s)+\kappa\Big)=H[e^{-\frac{s}{2}}U+\kappa](X,s)&\\
\big(\partial_s-\frac{1}{2}\big)U+\Big(\frac{U+e^\frac{s}{2}(\kappa-\dot{\xi})}{1-\dot{\tau}}+\frac{3}{2}X\Big)\partial_XU=-\frac{e^{-\frac{s}{2}}\dot{\kappa}}{1-\dot{\tau}}+\frac{e^{-s}}{1-\dot{\tau}}H[U+e^\frac{s}{2}\kappa]\qquad&
\end{align*}
which is \eqref{eq:ansatz}.

We then derive \eqref{eq:U+kappa}:
\begin{align*}
\partial_s(e^{-\frac{s}{2}}U+\kappa)&=-\frac{1}{2}e^{-\frac{s}{2}}U+e^{-\frac{s}{2}}\partial_sU+\dot{\kappa}\frac{dt}{ds}\\
&=e^{-\frac{s}{2}}(\partial_s-\frac{1}{2})U+\frac{e^{-s}}{1-\dot{\tau}}\dot{\kappa},\\
\partial_XU(e^{-\frac{s}{2}}U+\kappa)&=e^{-\frac{s}{2}}\partial_XU.
\end{align*}
So from \eqref{eq:ansatz} we have
\begin{align*}
\partial_s(e^{-\frac{s}{2}}U+\kappa)+V\partial_X(e^{-\frac{s}{2}}U+\kappa)&=e^{-\frac{s}{2}}\Big[(\partial_s-\frac{1}{2})U+V\partial_XU+\frac{e^{-\frac{s}{2}}\dot{\kappa}}{1-\dot{\tau}}\Big]\\
&=\frac{e^{-\frac{3}{2}s}}{1-\dot{\tau}}H[U+e^\frac{s}{2}\kappa],
\end{align*}
which is \eqref{eq:U+kappa}.

\section{Interpolation lemmas}
\begin{lemma}[Gagliardo-Nirenberg-Sobolev interpolation]
Let $f:\mathbb{R}^d\to\mathbb{R}$, and let $1\leq q,\,r\leq \infty$, $j,\,m\in \mathbb{N}$ (including 0) and $j/m\leq \alpha\leq 1$ be such that
\begin{align*}
\frac{1}{p}=\frac{j}{d}+\alpha(\frac{1}{r}-\frac{m}{d})+\frac{1-\alpha}{q},
\end{align*}
then
\begin{align*}
\|\partial^j f\|_{L^p}\lesssim \|\partial^mf\|_{L^r}^\alpha\|f\|_{L^q}^{1-\alpha},
\end{align*}
with two exceptions
\begin{enumerate}
\item If $j=0,\ mr<d$ and $q=\infty$, then we assume additionally that either $f$ tends to 0 at infinity or that $f\in L^{q'}$ for some $q'<\infty$.
\item If $1<r<\infty$ and $m-j-d/r\in\mathbb{N}$, then we also assume that $\alpha<1$. 
\end{enumerate}\label{lem:GNSinterpolation}
\end{lemma}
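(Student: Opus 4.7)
The statement is the classical Gagliardo–Nirenberg–Sobolev inequality, so the plan is to assemble it from two well-known ingredients rather than develop anything new. The proof naturally splits into the case $j=0$ (pure Sobolev interpolation) and the general case, which is reduced to $j=0$ by applying the inequality to the derivatives.

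First I would treat the endpoint $p=dp_*/(d-mp_*)$ where $\alpha=1$ and $j=0$. The standard route is to establish the Sobolev embedding $\|f\|_{L^{dr/(d-r)}}\lesssim \|\nabla f\|_{L^r}$ for $1\leq r<d$ (via the Loomis–Whitney inequality for $r=1$ and then Hölder for general $r$), and iterate $m$ times to obtain $\|f\|_{L^{dr/(d-mr)}}\lesssim \|\partial^m f\|_{L^r}$ when $mr<d$. Combined with the $L^q$ bound $\|f\|_{L^q}$, Hölder's inequality on $L^{p_0}\cap L^q\hookrightarrow L^p$ with exponent $\alpha$ gives the $j=0$ case:
\begin{equation*}
\|f\|_{L^p}\lesssim \|\partial^m f\|_{L^r}^\alpha\|f\|_{L^q}^{1-\alpha},\qquad \tfrac{1}{p}=\alpha(\tfrac{1}{r}-\tfrac{m}{d})+\tfrac{1-\alpha}{q}.
\end{equation*}
The assumption in exception (1) that $f$ vanishes at infinity (or lies in some $L^{q'}$) is exactly what is needed to make sense of the pure Sobolev step when $q=\infty$.

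For $j\geq 1$, I would apply the $j=0$ case to $g=\partial^j f$ with $m$ replaced by $m-j$, getting
\begin{equation*}
\|\partial^j f\|_{L^p}\lesssim \|\partial^m f\|_{L^r}^{\beta}\|\partial^j f\|_{L^{q_1}}^{1-\beta},
\end{equation*}
for an appropriate pair $(\beta,q_1)$, and then remove the intermediate $\|\partial^j f\|_{L^{q_1}}$ by an additional interpolation with $\|f\|_{L^q}$. The scaling exponents are forced: they must match the dilation $f\mapsto f(\lambda\cdot)$, which leaves only the stated relation among $1/p,1/r,1/q,j,m,\alpha,d$. So the algebra amounts to checking that the composition of the two Hölder interpolations produces exactly this exponent, which is a routine calculation I would not expand.

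The technical obstacle, and really the only subtle point, is the borderline case (2): when $1<r<\infty$ and $m-j-d/r$ is a nonnegative integer, the scaling argument formally allows $\alpha=1$ but the corresponding Sobolev embedding fails (e.g.\ $\dot W^{1,d}\not\hookrightarrow L^\infty$). The plan to avoid this is to restrict to $\alpha<1$ as the hypothesis requires, so that one always interpolates strictly between the two endpoints and the offending logarithmic loss does not appear. Since this is a standard textbook result, in the write-up I would cite the classical references (Nirenberg's original paper, or the treatment in standard monographs) rather than reproducing all of the above in detail.
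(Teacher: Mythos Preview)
The paper does not prove this lemma at all; it is simply stated in Appendix~C as a classical interpolation inequality and used as a black box throughout. Your proposal to sketch the standard proof (Sobolev embedding via Loomis--Whitney and iteration for the $j=0$ endpoint, then reduction of $j\geq 1$ to $j=0$ plus H\"older, with the scaling identity fixing the exponents) is correct and is the textbook route, and indeed your own final remark---that you would ultimately just cite Nirenberg's paper or a monograph---is exactly what the paper does in practice.
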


\begin{lemma}[Sobolev interpolation]
As a special case, when $p=q=r=2$, we can make the constant to be 1:
\begin{align*}
\|\partial^jf\|_{L^2}\leq \|\partial^mf\|_{L^2}^\alpha\|f\|_{L^2}^{1-\alpha}
\end{align*}
where $\alpha=j/m$.\label{lem:Sinterpolation}
\end{lemma}

\section{Two simple lemmas}\label{ap:simple}
\begin{lemma}
By choosing $\epsilon$ sufficiently small, we have
\begin{align*}
\Big|\frac{1}{1-\dot{\tau}}\Big|\leq 1+2\epsilon^\frac{3}{4}.
\end{align*}
Thus, by choosing $\epsilon$ sufficiently small, we can make $1/(1-\dot{\tau})$ as close to 1 as we need.\label{lem:1-dottau}
\end{lemma}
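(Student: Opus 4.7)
The plan is a direct reduction to the bootstrap bound on $\dot{\tau}$. From the bootstrap assumption \eqref{eq:taubound}, we have $|\dot{\tau}(t)| \leq e^{-\frac{3}{4}s}$. Since the self-similar time satisfies $s \geq -\log \epsilon$ (as recorded at the start of Section \ref{sec:result}), this immediately gives $|\dot{\tau}(t)| \leq \epsilon^{3/4}$.

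Next, I would choose $\epsilon$ small enough so that $\epsilon^{3/4} \leq \tfrac{1}{2}$, which ensures that $1-\dot{\tau}$ is bounded away from zero and the reciprocal is well defined. Then I would write
\begin{align*}
\Big|\frac{1}{1-\dot{\tau}}\Big| \leq \frac{1}{1-|\dot{\tau}|} \leq \frac{1}{1-\epsilon^{3/4}} = 1 + \frac{\epsilon^{3/4}}{1-\epsilon^{3/4}} \leq 1 + 2\epsilon^{3/4},
\end{align*}
where the last inequality uses $1-\epsilon^{3/4} \geq \tfrac{1}{2}$. This closes the estimate.

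There is essentially no obstacle here; the lemma is a quantitative reformulation of the smallness of $\dot{\tau}$ furnished by the bootstrap assumption, packaged into the form in which it will actually be used throughout the remaining forcing-term estimates (where factors of $1/(1-\dot{\tau})$ appear). The only subtle point is to check that the threshold on $\epsilon$ imposed here is consistent with the global smallness assumption $\epsilon \leq 10^{-4}$ stated in Section \ref{sec:result}, which it clearly is since $(10^{-4})^{3/4} \ll 1/2$.
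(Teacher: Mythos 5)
Your proposal is correct and follows essentially the same route as the paper: both reduce to the bootstrap bound $|\dot{\tau}|\leq e^{-\frac{3}{4}s}\leq \epsilon^{\frac{3}{4}}$ and then bound the reciprocal, the only cosmetic difference being that you use the geometric-series estimate $1/(1-\epsilon^{3/4})\leq 1+2\epsilon^{3/4}$ explicitly while the paper invokes a first-order expansion $1/(1-\dot{\tau})=1+\dot{\tau}+o(\dot{\tau})$. Your version is, if anything, slightly more careful in verifying the quantitative threshold on $\epsilon$.
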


\begin{proof}
By \eqref{eq:taubound}, $|\dot{\tau}(t)|\leq \epsilon^\frac{3}{4}$. So we have
\begin{align*}
\frac{1}{1-\dot{\tau}}&=1+\dot{\tau}+o(\dot{\tau})\\
\implies\qquad\Big|\frac{1}{1-\dot{\tau}}\Big|&\leq 1+2|\dot{\tau}|\leq 1+2e^{-\frac{3}{4}s}\leq 1+2\epsilon^\frac{3}{4}.
\end{align*}
\end{proof}

\begin{lemma}[Conservation of $\|u(\cdot,t)\|_{L^2}$]
For sufficiently smooth solution $u$, we have $\|u(\cdot,t)\|_{L^2}=\|u_0\|_{L^2}$.\label{lem:conservation}
\end{lemma}
\begin{proof}
Taking inner product of \eqref{eq:BH} with $u$, and using the orthogonality of Hilbert transform, i.e. $\langle H[u],u\rangle=0$, we have
\begin{align*}
\frac{1}{2}\frac{d}{dt}\|u\|_{L^2}^2+\int_\mathbb{R}u^2\partial_xu\,dx&=0\\
\frac{1}{2}\frac{d}{dt}\|u\|^2_{L^2}+\frac{1}{3}\int_\mathbb{R}\partial_x(u^3)\,dx&=0\\
\frac{d}{dt}\|u\|^2_{L^2}&=0
\end{align*}
if the solution is sufficiently smooth for the integrals to make sense. Hence, for sufficiently smooth solution $u$, the $L^2$-norm is conserved.
\end{proof}

\newpage

\end{document}